\newtheorem{theorem}{Theorem}[section]
\newtheorem{lemma}{Lemma}[section]
\newtheorem{corollary}{Corollary}[section]
\newtheorem{proposition}{Proposition}[section]
\theoremstyle{definition}
\newtheorem{open problem}{Open Problem}
\def\cal{\mathcal}
\let\Re=\undefined
\DeclareMathOperator{\Re}{Re}
\let\Im=\undefined
\DeclareMathOperator{\Im}{Im}
\begin{document}
\title[The generic behavior of solutions \ldots
 ]{The generic behavior of solutions to some evolution equations: asymptotics and Sobolev norms}
\author{Sergey A. Denisov}
\address{
\begin{flushleft}
University of Wisconsin--Madison\\  Mathematics Department\\
480 Lincoln Dr., Madison, WI, 53706, USA\\
  denissov@math.wisc.edu
\end{flushleft}
  }
 \maketitle
\begin{abstract}
In this paper, we study the generic behavior of the solutions to a
large class of evolution equations. The Schr\"odinger evolution is
considered as an application.
\end{abstract} \vspace{1cm}

In this paper, we develop methods to control the generic behavior of
solutions to various evolution equations. The word ``generic" will
refer to the ``coupling constant" that appears in front of the
diagonal operator (a differential operator in most applications)
which is perturbed by a potential.  We were motivated by recent
results where the behavior of Sobolev norms was studied for various
evolution equations \cite{b1,pert, w1}. The general situation of
fixed coupling constant was studied in many papers (e.g., \cite{Nen}
and references therein).

The structure of the paper is as follows. In the first section, we
prove simple results for the ``typical" behavior of solutions to
 $2\times 2$ system of differential equations which preserve $\ell^2$
 norm of the initial value in Cauchy problem. The section 2 deals with ``integrable"
 case when the
 transport equation is considered on the circle. In section 3, we prove results similar to those in
 section 1 but for the general $N\times N$ systems. The section 4
 contains the applications of these results to evolution in Hilbert
 spaces, e.g., the non-stationary Schr\"odinger equation. In the last section,
 we consider the most difficult case when the so-called
 ``gap condition" deteriorates in time. We handle only the
 short-range case, the general situation is far more difficult and will be considered
 elsewhere. The paper is concluded with Appendix which contains some
 well-known results we use in the main text. The first two sections
 and Appendix have mostly pedagogical value, the main results
 are in sections~3 and~5.

 In the text, we will use the following notations: for
 $f_{1(2)}\geq 0$, $f_1\lesssim f_2$ means there is a constant $C$
so that $f_1\leq Cf_2$. The symbols $e_j$ will denote the standard
 basis vectors in $\mathbb{C}^N$.

\section{The model case of $2\times 2$ system}

We start this section with the following model system of two ODEs.

\begin{equation}
X_t=i(\Lambda+V(t))X, \quad X(0)=I
\end{equation}
where
\begin{equation}
\Lambda=
\left[
\begin{array}{cc}
\lambda_1 & 0 \\
0 & \lambda_2
\end{array}
\right], \quad \lambda_{1(2)}\in \mathbb{R}
\end{equation}
and Hermitian $V(t)$
\[
V(t)=\left[
\begin{array}{cc}
v_{11}(t) & v_{12}(t) \\
\bar{v}_{12}(t) & v_{22}(t)
\end{array}
 \right]\to 0, \quad t\to\infty
\]
in some way to be specified below. Notice that $X(t)$ is unitary in
this setting.  The standard problem to address is, of course, the
long-time asymptotics of $X(t)$. In the case of strong decay, i.e.
when $V(t)\in L^1(\mathbb{R}^+)$, $X(t)$ has a limit as can be
easily seen from iteration of the corresponding integral equations.
Take
\[
X=UY
\]
where
\[
U=\left[
\begin{array}{cc}
\exp\left[i\lambda_1t+i\int\limits_0^tv_{11}(\tau)d\tau\right]  & 0 \\
0 & \exp\left[i\lambda_2t+i\int\limits_0^t v_{22}(\tau)d\tau
\right]
\end{array}
 \right]
\]
That reduces the problem to
\begin{equation}
Y_t=i\left[
\begin{array}{cc}
0 & q(t) \\
\bar{q}(t) & 0
\end{array}
 \right]Y,\quad Y(0)=I\label{model1}
\end{equation}
with
\[
q(t)=v_{12}(t)\exp\left[i(\lambda_2-\lambda_1)\tau+i\int\limits_0^t(v_{22}(\tau)-v_{11}(\tau))d\tau\right]
\]
and it decay as fast as $v_{12}$. Does $Y(t)$ have a limit as
$t\to\infty$? Clearly, for $q(t)\in L^1(\mathbb{R}^+)$ the answer is
yes but already for $q(t)\in L^p(\mathbb{R}^+), \quad p>1$ this is
not the case in general. Indeed, take $q(t)=i\epsilon$ on $t\in
[0,\epsilon^{-1}]$. Then, on that interval,
\[
Y(t)=\left[
\begin{array}{cc}
\cos(\epsilon t) & -\sin(\epsilon t) \\
\sin(\epsilon t) & \cos(\epsilon t)
\end{array}
 \right]
\]
a rotation by $\epsilon t$. Clearly,
$\|q\|_{L^1[0,\epsilon^{-1}]}=1$ but
$\|q\|_{L^p[0,\epsilon^{-1}]}=\epsilon^{1-p^{-1}}$. Therefore, by
taking $q(t)=\epsilon_n, \epsilon_n\to 0$ on consecutive intervals,
we can arrange for $q$ to be in $L^p(\mathbb{R}^+)$ but the solution
has no limit. In the meantime, we will see that it makes sense to
talk about the following problem: let $\lambda_1=0$, $\lambda_2=k$,
$v_{11}=v_{22}=0$ and $v_{12}=q(t)$. Thus,
\begin{equation}
X_t=i\left[
\begin{array}{cc}
0 & q(t) \\
\bar{q}(t) & k
\end{array}
\right]X,\quad X(0)=I \label{model31}
\end{equation}
and we will study the asymptotics for {\bf generic} frequency $k$.
As the discrete version of (\ref{model31}) one might consider the
following recursion
\begin{eqnarray*}
X_{n+1}(z)=\rho_n^{-1/2} \left[
\begin{array}{cc}
1 & \gamma_n \\
-\bar{\gamma}_n & 1
\end{array} \right] \cdot \left[
\begin{array}{cc}
z & 0\\
0 & 1
\end{array}
\right]X_n(z) \\
 X_0(z)=I_{2\times 2}, \quad |z|=1, \quad
\rho_n=1+|\gamma_n|^2
\end{eqnarray*}
This is different from the recursion for polynomials orthogonal on
the circle \cite{Sim1} only by sign "$-$" in front of $\gamma_n$ and
by slightly different formula for $\rho_n$.

 {\bf Remark 1}. The simple calculation shows that
\[\displaystyle
X(t,k)= \left[
\begin{array}{cc}
x_{11}(t,k) & x_{12}(t,k) \\
x_{21}(t,k) & x_{22}(t,k)
\end{array}
\right] =\left[
\begin{array}{cc}
x_{11}(t,k) & x_{12}(t,k) \\
-e^{ikt}\overline{x_{12}(t,k)} & e^{ikt}\overline{x_{11}(t,k)}
\end{array}
\right]
\]
for real $k$.
\bigskip

{\bf Open problem.} Is it true that for $q(t)\in L^2(\mathbb{R}^+)$
and Lebesgue a.e. $k\in \mathbb{R}$, the limit
$\lim\limits_{t\to\infty} X(t,k)$ exists? If so, how do the points
of convergence correspond to frequencies $k$ for which
$[Mq](k)<\infty$, where $Mq$ is the Carleson-Hunt maximal function?

The similar problem is known for the Krein systems \cite{Den1}
\[
Z_t=\left[
\begin{array}{cc}
ik & \bar{q}(t) \\
q(t) & 0
\end{array}
\right]Z, \quad Z(0)=I, \quad k\in \mathbb{R}
\]
In this case, though, the solution $Z$ is $J$-unitary, with
\[
J=\left[
\begin{array}{cc}
1 & 0 \\
0 & -1
\end{array}
\right]
\]

We will consider (\ref{model31}) and will prove a simple result
which has an analog in the theory of Krein systems (the so-called
Szeg\H{o} case) which gives a somewhat weaker type of convergence.
That will be a warm-up for a later discussion of the general case.
 Consider the first column
$(x_{11}(t,k),x_{21}(t,k))^t$ of $X(t,k)$. The functions
$x_{11(21)}(t,k)$ are holomorphic in $k$ and have the following
properties for any fixed $t$.

\begin{lemma}
For any locally integrable $q(t)$, we have

\begin{itemize}
\item[(a)]
 $\displaystyle |x_{11}(t,k)|^2+|x_{21}(t,k)|^2=1-2\Im
k\int\limits_0^t |x_{21}(\tau,k)|^2d\tau,\quad  k\in \mathbb{C} $

\item[(b)]
$|x_{11(21)}(t,k)|\leq 1, \quad k\in \overline{\mathbb{C}^+}$, and
$\|x_{21}(t,k)\|_2\leq (2\Im k)^{-1/2}$ for $k\in \mathbb{C}^+$.

\end{itemize}
\end{lemma}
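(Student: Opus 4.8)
The plan is to derive an identity for the $t$-derivative of $|x_{11}(t,k)|^2+|x_{21}(t,k)|^2$ directly from the differential equation (\ref{model31}) and integrate it. Write $u(t)=x_{11}(t,k)$, $v(t)=x_{21}(t,k)$, so that the first column $(u,v)^t$ satisfies
\[
u_t=iqv,\qquad v_t=i\bar q u+ikv,
\]
with $u(0)=1$, $v(0)=0$. First I would compute $\frac{d}{dt}\bigl(|u|^2+|v|^2\bigr)=2\Re(\bar u u_t)+2\Re(\bar v v_t)$. Using the equations, $\bar u u_t=iq\bar u v$ and $\bar v v_t=i\bar q|v|^2\cdot\overline{?}$ — more precisely $\bar v v_t=i\bar q u\bar v+ik|v|^2$. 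The cross terms $2\Re(iq\bar u v)+2\Re(i\bar q u\bar v)=2\Re(iq\bar u v)+2\Re(\overline{iq\bar u v})=4\Re(iq\bar u v)$ do \emph{not} cancel for complex $q$; I should instead pair them as $2\Re(iq\bar u v+i\bar q u\bar v)$ and note $i\bar q u\bar v=\overline{i q \bar u v}\cdot(-1)$? Let me be careful: $\overline{iq\bar u v}=-i\bar q u\bar v$, so $2\Re(iq\bar u v)+2\Re(i\bar q u\bar v)=2\Re(iq\bar u v)-2\Re(iq\bar u v)=0$. Good — the potential terms cancel, and one is left with $\frac{d}{dt}(|u|^2+|v|^2)=2\Re(ik|v|^2)=-2(\Im k)|v|^2$. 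Integrating from $0$ to $t$ and using $|u(0)|^2+|v(0)|^2=1$ yields (a) for all $k\in\mathbb{C}$.

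For part (b), restrict to $k\in\overline{\mathbb{C}^+}$, i.e. $\Im k\ge 0$. Then the integral term in (a) is nonnegative, so $|x_{11}(t,k)|^2+|x_{21}(t,k)|^2\le 1$, which gives $|x_{11}(t,k)|\le 1$ and $|x_{21}(t,k)|\le 1$ separately. For the $L^2$-in-$t$ bound when $\Im k>0$, rearrange (a) as
\[
2(\Im k)\int_0^t|x_{21}(\tau,k)|^2\,d\tau=1-|x_{11}(t,k)|^2-|x_{21}(t,k)|^2\le 1,
\]
so $\int_0^t|x_{21}(\tau,k)|^2\,d\tau\le (2\Im k)^{-1}$ uniformly in $t$; letting $t\to\infty$ gives $\|x_{21}(\cdot,k)\|_2\le(2\Im k)^{-1/2}$.

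The computation is routine once the pairing of cross terms is done correctly; the only point requiring care is exactly that cancellation, which hinges on $V$ being Hermitian (here reflected in the off-diagonal entries $q$ and $\bar q$), and on keeping track of the sign of $\Im k$ when passing from the identity to the inequalities. I expect no genuine obstacle: the holomorphy of $x_{11},x_{21}$ in $k$ (needed only to make sense of the statement for complex $k$) follows from standard analytic dependence of solutions of linear ODEs on parameters, and local integrability of $q$ is exactly what is needed to justify differentiating under the integral and applying the fundamental theorem of calculus to the absolutely continuous function $t\mapsto|u(t)|^2+|v(t)|^2$.
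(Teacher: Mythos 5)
Your computation is correct and is exactly what the paper means by ``a direct corollary of the differential equations'': differentiate $|x_{11}|^2+|x_{21}|^2$ along the flow, observe that the Hermitian off-diagonal terms cancel while the diagonal entry $ik$ contributes $-2(\Im k)|x_{21}|^2$, and integrate. The deductions in (b) from the resulting identity are likewise the intended ones, so there is nothing to add.
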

\begin{proof}
This is a direct corollary of the differential equations.
\end{proof}

Assuming, in addition, that $q(t)$ is square summable, we get

\begin{lemma} If $q(t)\in L^2(\mathbb{R}^+)$, then
\begin{itemize}
\item[(a)]
$x_{11}(t,k)\to x_{11}(\infty,k), \quad x_{21}(t,k)\to 0$
 uniformly on compacts in $\mathbb{C}^+$

\item[(b)]
 $|x_{11}(t,k)|\geq 1-(\Im
k)^{-1}\|q\|_2^2, \quad \|x_{21}(t,k)\|_2\leq \|q\|_2(\Im
k)^{-1},\quad k\in \mathbb{C}^+$

\item[(c)] The function $x_{11}(\infty,k)$ is nonzero function in the unit ball in
$H^\infty(\mathbb{C}^+)$ and $x_{11}(k,t)\to x_{11}(\infty,k)$ in
the weak-$\ast$ sense on $\mathbb{R}$

\item[(d)] The following estimates hold
\begin{eqnarray}
\int\limits_\mathbb{R} \ln |x_{11}(t,k)|dk\geq -\pi\int\limits_0^t
|q(\tau)|^2d\tau \label{first}\hspace{3cm}\\
 0\geq \int\limits_\mathbb{R} (|x_{11}(\infty,k)|-1)dk\geq
\int\limits_\mathbb{R} \ln |x_{11}(\infty,k)|dk\geq
-\pi\int\limits_0^\infty |q(\tau)|^2d\tau \label{second}
\end{eqnarray}

\item[(e)] We have
\begin{equation}
\left[1-x_{11}(\infty,k)\right]_{L^{1,w}(\mathbb{R})}\lesssim
\|q\|_2^2, \quad \|1-x_{11}(\infty,k)\|_{L^{p}(\mathbb{R})}\leq
C(p)\|q\|_2^{2/p}, \quad 1<p<\infty \label{este1}
\end{equation}

\end{itemize}
\label{lemka}
\end{lemma}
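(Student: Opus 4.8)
The plan is to run everything through the Volterra integral equations satisfied by the first column of $X(t,k)$, obtained by integrating the system (\ref{model31}):
\[
x_{11}(t,k)=1+i\int_0^t q(\tau)\,x_{21}(\tau,k)\,d\tau,\qquad
x_{21}(t,k)=i\int_0^t e^{ik(t-\tau)}\overline{q(\tau)}\,x_{11}(\tau,k)\,d\tau .
\]
For (b), I would use $|x_{11}|\le 1$ on $\overline{\mathbb C^+}$ (Lemma~1.1) and $|e^{ik(t-\tau)}|\le 1$ for $\tau\le t$, $\Im k\ge 0$, to get the pointwise bound $|x_{21}(t,k)|\le\bigl(e^{-(\Im k)\,\cdot}\mathbf 1_{[0,\infty)}\ast|q|\mathbf 1_{[0,\infty)}\bigr)(t)$; Young's inequality then gives $\|x_{21}(\cdot,k)\|_{L^2(\mathbb R^+)}\le(\Im k)^{-1}\|q\|_2$, and feeding this into the first equation yields $|1-x_{11}(t,k)|\le\|q\|_2\,\|x_{21}(\cdot,k)\|_{L^2}\le(\Im k)^{-1}\|q\|_2^2$, i.e.\ the lower bound on $|x_{11}|$. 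For (a), Lemma~1.1(b) already shows $x_{21}(\cdot,k)\in L^2(\mathbb R^+)$ for locally integrable $q$, so $|x_{11}(t,k)-x_{11}(s,k)|\le\|q\|_{L^2(s,\infty)}\|x_{21}(\cdot,k)\|_{L^2(\mathbb R^+)}\to 0$ as $s\to\infty$ and the limit $x_{11}(\infty,k)$ exists; splitting the $x_{21}$-integral at $t/2$ (using $|x_{11}|\le 1$) gives $x_{21}(t,k)\to 0$. Since $\Im k$ is bounded below on compacts of $\mathbb C^+$, all bounds are uniform there; alternatively the $x_{11}(t,\cdot)$ form a normal family, so pointwise convergence is automatically locally uniform.

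For (c), $x_{11}(\infty,\cdot)$ is a locally uniform limit of holomorphic functions bounded by $1$, hence lies in the unit ball of $H^\infty(\mathbb C^+)$, and it is not identically $0$ since $|x_{11}(\infty,iy)|\ge 1-y^{-1}\|q\|_2^2>0$ for large $y$ by (b). The boundary traces $\{x_{11}(t,\cdot)\}$ are bounded in $L^\infty(\mathbb R)=(L^1(\mathbb R))^\ast$, and $L^1(\mathbb R)$ is separable, so along any $t_n\to\infty$ some subsequence converges weak-$\ast$ to a function $h$; pairing with the Poisson kernel $P_k\in L^1(\mathbb R)$ and using the Poisson representation of bounded analytic functions gives $x_{11}(t_{n_j},k)\to\int_{\mathbb R}P_k h$, while also $x_{11}(t_{n_j},k)\to x_{11}(\infty,k)$, so $x_{11}(\infty,\cdot)$ is the Poisson integral of $h$ and thus has $h$ as its boundary trace a.e. Hence every weak-$\ast$ cluster point of $x_{11}(t,\cdot)$ equals the boundary trace of $x_{11}(\infty,\cdot)$, which forces weak-$\ast$ convergence.

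Part (d) combines an asymptotic computation with a classical inequality. Iterating the integral equations once and specializing to $k=iy$, writing $x_{11}(s,iy)=1+(x_{11}(s,iy)-1)$ with $x_{11}(s,iy)-1=O(y^{-1})$ uniformly in $s$ by (b), and using that $u\mapsto y e^{-yu}\mathbf 1_{(0,\infty)}(u)$ is an approximate identity as $y\to\infty$, one obtains
\[
x_{11}(t,iy)=1-y^{-1}\|q\|_{L^2(0,t)}^2+o(y^{-1})\qquad(y\to\infty),
\]
uniformly in $t\in(0,\infty]$; hence $\pi y\log|x_{11}(t,iy)|\to-\pi\|q\|_{L^2(0,t)}^2$, and likewise at $t=\infty$. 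On the other hand, for $f\in H^\infty(\mathbb C^+)$ with $|f|\le 1$ the mean value inequality for the Smirnov class gives $\log|f(iy)|\le\int_{\mathbb R}P_{iy}(x)\log|f(x)|\,dx$; multiplying by $\pi y$ and letting $y\to\infty$, the right side increases to $\int_{\mathbb R}\log|f(x)|\,dx$ by monotone convergence (here $\log|f|\le 0$ and $\pi y P_{iy}(x)\uparrow 1$), so $\int_{\mathbb R}\log|f(x)|\,dx\ge\limsup_{y\to\infty}\pi y\log|f(iy)|$. Taking $f=x_{11}(t,\cdot)$ yields (\ref{first}), and $f=x_{11}(\infty,\cdot)$ the last inequality of (\ref{second}); the middle inequalities of (\ref{second}) follow from $\log r\le r-1$ integrated over $\mathbb R$ together with $|x_{11}(\infty,\cdot)|\le 1$, which makes $|x_{11}(\infty,k)|-1$ nonpositive.

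For (e), note first that $1-x_{11}(t,\cdot)$ has nonnegative real part on $\overline{\mathbb C^+}$: on $\mathbb R$, $\Re(1-x_{11})=1-\Re x_{11}\ge 1-|x_{11}|\ge 0$ by Lemma~1.1(b), and the maximum principle for the bounded harmonic function $\Re(1-x_{11})$ carries this into $\mathbb C^+$; letting $t\to\infty$, $u:=\Re(1-x_{11}(\infty,\cdot))\ge 0$ on $\mathbb C^+$. Being bounded, $u$ is the Poisson integral of its boundary trace $u^\ast\in L^\infty(\mathbb R)$, $u^\ast\ge 0$, and by monotone convergence
\[
\|u^\ast\|_{L^1(\mathbb R)}=\lim_{y\to\infty}\pi y\,u(iy)=\lim_{y\to\infty}\pi y\bigl(1-\Re x_{11}(\infty,iy)\bigr)=\pi\|q\|_2^2 ,
\]
using the asymptotics from (d). Since $F:=1-x_{11}(\infty,\cdot)\in H^\infty(\mathbb C^+)$ has real part $u$ and $F(iy)\to 0$, its boundary imaginary part is, up to sign, the Hilbert transform of $u^\ast$, so Kolmogorov's weak-type $(1,1)$ bound gives $\|\Im F\|_{L^{1,w}(\mathbb R)}\lesssim\|u^\ast\|_{L^1}\lesssim\|q\|_2^2$; combined with $\|u^\ast\|_{L^{1,w}}\le\|u^\ast\|_{L^1}$ and the quasi-triangle inequality for $L^{1,w}$, this is the first estimate in (\ref{este1}). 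For the $L^p$ estimate, with $A\lesssim\|q\|_2^2$ the weak-$L^1$ constant and $|F|\le 2$ pointwise,
\[
\|F\|_{L^p(\mathbb R)}^p=p\int_0^2\lambda^{p-1}\bigl|\{\,|F|>\lambda\,\}\bigr|\,d\lambda\le pA\int_0^2\lambda^{p-2}\,d\lambda=\frac{p\,2^{p-1}}{p-1}\,A ,
\]
so $\|F\|_{L^p}\le C(p)\|q\|_2^{2/p}$. I expect the main obstacle to be the bookkeeping in (d)--(e): extracting the \emph{exact} coefficient $\|q\|_2^2$ in the $y^{-1}$-asymptotics of $x_{11}(\cdot,iy)$, uniformly in $t$, and then recognizing that this is precisely the quantity the Poisson--Jensen inequality and Kolmogorov's theorem turn into (\ref{first})--(\ref{second}) and the weak-$L^1$ bound in (\ref{este1}); parts (a)--(c) are routine.
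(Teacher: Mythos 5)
Your proof is correct and follows essentially the same route as the paper: the Volterra equations for the first column with Young's inequality for (a)--(b), normal families and the Poisson representation for (c), the $y^{-1}$-asymptotics at $k=iy$ combined with the sub-mean-value property of $\ln|x_{11}|$ for (d), and the trace formula plus interpolation for (e). The one substantive addition is that you make explicit the Herglotz/Kolmogorov step behind the weak-$L^1$ bound in (e), which the paper compresses into a single ``therefore''; this is a welcome clarification, since the pointwise inequality $2\Re h\le -|h|^2$ together with $\Re h\in L^1(\mathbb{R})$ alone would not force $h\in L^{1,w}(\mathbb{R})$ with the stated bound, and one genuinely needs the conjugate-function (Hilbert transform) estimate for the imaginary part.
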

\begin{proof}
If $x_{21}=e^{ikt}y$, then we have the following integral equations
\begin{eqnarray*}
x_{11}(t,k)=1+i\int\limits_0^t q(\tau)e^{ik\tau}y(\tau,k)d\tau,\quad
y(t,k)=i\int\limits_0^t \bar{q}(\tau)e^{-ik\tau}x_{11}(\tau,k)d\tau
\end{eqnarray*}
Therefore,
\begin{equation}
x_{11}(t,k)=1-\int\limits_0^t
q(\tau_1)e^{ik\tau_1}\int\limits_0^{\tau_1} \bar{q}(\tau_2)
e^{-ik\tau_2} x_{11}(\tau_2,k)d\tau_2 d\tau_1 \label{rep1}
\end{equation}
Due to the estimate $|x_{11}(t,k)|\leq 1$ and Young's inequality,
the $L^1$ norm of the integrand in $\tau_1$ is smaller than
$\|q\|_2^2 (\Im k)^{-1}$. That implies the uniform convergence on
compacts of $\mathbb{C}^+$ and the bound from below for $x_{11}$.
The representation
\[
x_{21}(t,k)=ie^{ikt}\int\limits_0^t
\bar{q}(\tau)e^{-ik\tau}x_{11}(\tau,k)d\tau
\]
gives uniform convergence to zero for $x_{21}(t,k)$ and the estimate
on its $L^2$ norm. The function $x_{11}(\infty,k)$ is obviously in
the unit ball of $H^\infty(\mathbb{C}^+)$ and is nonzero due to the
estimate from below. Since $x_{11}(t,k)(k+i)^{-1},
x_{11}(\infty,k)(k+i)^{-1}$ are both uniformly bounded in
$H^2(\mathbb{C}^+)$ and
\[
x_{11}(t,k)(k+i)^{-1}\to x_{11}(k)(k+i)^{-1}
\]
as $t\to\infty$ uniformly on the compacts in $\mathbb{C}^+$, we have
\[
x_{11}(t,k)(k+i)^{-1}\to x_{11}(\infty,k)(k+i)^{-1}
\]
 weakly in
$H^2(\mathbb{C}^+)$. Thus, we have the weak-$\ast$ convergence of
$x_{11}(t,k)$ to $x_{11}(\infty,k)$ on $\mathbb{R}$.

Let us prove estimates in (d) now. Iterate (\ref{rep1}) once and
take the first term (the second allows stronger estimate)
\[
\int\limits_0^t q(\tau_1)e^{ik\tau_1}\int\limits_0^{\tau_1}
\bar{q}(\tau_2) e^{-ik\tau_2}d\tau_2 d\tau_1=i\int
|\hat{q}_t(\omega)|^2(\omega+k)^{-1}d\omega
\]
by Plancherel, where $\hat{q}_t(\omega)$ is the Fourier transform of
$q(\tau)\cdot \chi_{[0,t]}(\tau)$.
 Therefore, we have
\[
x_{11}(t,k)=1-ik^{-1}\int\limits_0^t
|q(\tau)|^2d\tau+\overline{o}((\Im k)^{-1})
\]
as $\Im k\to +\infty$. The function $\ln |x_{11}(t,k)|$ is
subharmonic in $\mathbb{C}^+$ so
\[
\pi^{-1}\int \frac{y \ln |x_{11}(t,s)|}{(s-x)^2+y^2}ds\geq \ln
|x_{11}(t,k)|, \quad k=x+iy
\]
Since $\ln |x_{11}(t,s)|\leq 0$, we can take $k=iy, y\to\infty$ and
compare the first terms in asymptotics of l.h.s. and r.h.s. to get
(\ref{first}). The estimates for $x_{11}(\infty,k)$ are deduced
similarly.

Let us prove (e). Writing $x_{11}(\infty,k)=1+h(k)$ yields $2\Re
h\leq -|h|^2$ since $|x_{11}|\leq 1$ and
\[
\Re h(iy)=\frac{y}{\pi} \int \frac{\Re h(t)}{t^2+y^2} dt
\]
Taking $y\to +\infty$, we get

\[
\int_\mathbb{R} \Re h(t)dt=-\pi\int_0^\infty |q(\tau)|^2d\tau, \quad
\Re h(t)\leq 0, \quad |h(t)|\leq 2
\]
(this we might call the ``trace formula" for our evolution).
Therefore, we have
\[
\left[h\right]_{L^{1,w}(\mathbb{R})}\lesssim \|q\|_2^2
\]
and
\[
\|h\|_{L^{p}}(\mathbb{R})\leq C(p)\|q\|_2^{2/p},\quad 1<p<\infty
\]
by interpolation.
\end{proof}

In a similar way, one can show that $x_{12}(t,k)$ has a limit in
$\mathbb{C}^+$ which we will denote by $x_{12}(\infty,k)$. This, of
course, implies that $x_{12}(t,k)\to x_{12}(\infty,k)$ in the
weak-$\ast$ sense on the real line. In the next lemma, we will
improve the convergence result.
\begin{lemma}
If $q(t)\in L^2(\mathbb{R}^+)$, then for any $j=1,2$,

\[
\int\limits_{\mathbb{R}}|x_{1j}(t,k)- x_{1j}(\infty,k)|^2 dk\to 0,
\quad t\to\infty
\]\label{lem}
\end{lemma}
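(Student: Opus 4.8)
The plan is to upgrade the weak-$\ast$ convergence $x_{1j}(t,k)\to x_{1j}(\infty,k)$ on $\mathbb{R}$ to norm convergence in $L^2(\mathbb{R})$ by exploiting the fact that $H^2(\mathbb{C}^+)$ is a Hilbert space, where weak convergence together with convergence of norms implies strong convergence. I would work with the functions $g_t(k)=x_{1j}(t,k)(k+i)^{-1}$ and $g_\infty(k)=x_{1j}(\infty,k)(k+i)^{-1}$, which (as established in the proof of Lemma~\ref{lemka}) lie in $H^2(\mathbb{C}^+)$, are uniformly bounded there, and satisfy $g_t\to g_\infty$ weakly in $H^2(\mathbb{C}^+)$. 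Since $|g_t(k)|^2 = |x_{1j}(t,k)|^2/(k^2+1)$ and on $\mathbb{R}$ (real $k$) we have the exact identity $|x_{11}(t,k)|^2+|x_{21}(t,k)|^2=1$ from Lemma~1.2(a), and from Remark~1 the relations $|x_{22}|=|x_{11}|$, $|x_{21}|=|x_{12}|$ on $\mathbb{R}$, it follows that for real $k$ both $|x_{11}(t,k)|^2$ and $|x_{12}(t,k)|^2$ are controlled; in fact $|x_{11}(t,k)|\le 1$ and $|x_{12}(t,k)|\le 1$ for all $t$.

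The key step is to show $\|g_t\|_{H^2}\to\|g_\infty\|_{H^2}$, equivalently $\int_{\mathbb{R}}(|x_{1j}(t,k)|^2-|x_{1j}(\infty,k)|^2)(k^2+1)^{-1}\,dk\to 0$. For $j=2$ this is essentially immediate: $x_{21}(t,k)\to0$ uniformly on compacts of $\mathbb{C}^+$ by Lemma~\ref{lemka}(a), and $\|x_{21}(t,\cdot)\|_{L^2(\mathbb{R})}$ is bounded (one checks $\|x_{21}(t,\cdot)\|_{L^2(\mathbb{R})}\le\|q\|_{L^2[0,t]}$ directly from $x_{21}(t,k)=ie^{ikt}\int_0^t\bar q(\tau)e^{-ik\tau}x_{11}(\tau,k)\,d\tau$ via Plancherel and $|x_{11}|\le1$), so in fact $x_{21}(t,\cdot)\to0$ in $L^2(\mathbb{R})$ and hence $x_{21}(\infty,k)\equiv0$. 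For $j=1$ I would use the "trace formula" philosophy: from $|x_{11}(t,k)|^2=1-|x_{21}(t,k)|^2$ on $\mathbb{R}$ we get
\[
\int_{\mathbb{R}}\frac{|x_{11}(t,k)|^2-|x_{11}(\infty,k)|^2}{k^2+1}\,dk
= \int_{\mathbb{R}}\frac{|x_{21}(\infty,k)|^2-|x_{21}(t,k)|^2}{k^2+1}\,dk,
\]
using that $|x_{11}(\infty,k)|^2+|x_{21}(\infty,k)|^2=1$ a.e.\ on $\mathbb{R}$ (this boundary identity for the limit follows from the weak-$\ast$ convergence combined with the fact that the nontangential boundary values of $x_{11}(\infty,\cdot)$ and $x_{21}(\infty,\cdot)$ inherit, in the limit, the relation forced by $J$-unitarity on $\mathbb{R}$; alternatively pass to the limit in Lemma~1.2(a) at real $k$). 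Since $x_{21}(t,\cdot)\to x_{21}(\infty,\cdot)$ in $L^2(\mathbb{R})$ by the previous paragraph, the right-hand side tends to $0$, giving $\|g_t\|_{H^2}\to\|g_\infty\|_{H^2}$.

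Combining weak convergence with norm convergence in the Hilbert space $H^2(\mathbb{C}^+)$ yields $g_t\to g_\infty$ strongly in $H^2$, i.e.
\[
\int_{\mathbb{R}}\frac{|x_{1j}(t,k)-x_{1j}(\infty,k)|^2}{k^2+1}\,dk\to0.
\]
To remove the weight $(k^2+1)^{-1}$ and obtain the stated $\int_{\mathbb{R}}|x_{1j}(t,k)-x_{1j}(\infty,k)|^2\,dk\to0$, I split the integral into $|k|\le R$ and $|k|>R$: on $|k|\le R$ the weight is bounded below by $(R^2+1)^{-1}$, so that piece is controlled by the weighted integral; on $|k|>R$ one uses a uniform-in-$t$ tail bound. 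The main obstacle is exactly this tail estimate: a priori $x_{1j}(t,k)-x_{1j}(\infty,k)$ need only be bounded, not decaying, for large $|k|$, so I expect to need an $L^2(\mathbb{R})$ bound on $x_{1j}(t,\cdot)-x_{1j}(\infty,\cdot)$ that is uniform in $t$ together with equicontinuity of the tails. This is supplied by the identities above: on $\mathbb{R}$, $|x_{11}(t,k)-x_{11}(\infty,k)|^2 \le 2(1-\mathrm{Re}\,x_{11}(t,k)) + 2(1-\mathrm{Re}\,x_{11}(\infty,k))$ (using $|x_{11}|\le1$), and $1-\mathrm{Re}\,x_{11}(t,k)$ has a uniformly bounded $L^1(\mathbb{R})$ norm, in fact $\|1-x_{11}(t,\cdot)\|_{L^{1,w}(\mathbb{R})}\lesssim\|q\|_2^2$ as in (\ref{este1}); this localizes the mass and makes the tail uniformly small. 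For $j=2$ the tail is handled by the uniform bound $\|x_{21}(t,\cdot)\|_{L^2(\mathbb{R})}\le\|q\|_2$ plus the dominated-convergence argument already giving $L^2$ convergence. This completes the proof.
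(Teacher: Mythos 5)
There is a genuine gap, and it sits at the heart of your argument. Your treatment of $j=2$ rests on the claim that $x_{21}(t,\cdot)\to 0$ in $L^2(\mathbb{R})$ and hence that the limit of $x_{12}(t,\cdot)$ on the real line is zero. This is false. Uniform convergence to zero on compact subsets of the \emph{open} half-plane $\mathbb{C}^+$, together with bounded $L^2(\mathbb{R})$ norms, does not give convergence to zero on the boundary: the function $e^{ikt}$ is the canonical counterexample, and it is exactly the factor responsible here, since $x_{21}(t,k)=e^{ikt}y(t,k)$ with $y$ converging to a generically nonzero limit. On $\mathbb{R}$, unitarity gives $|x_{12}(t,k)|^2=1-|x_{11}(t,k)|^2$, which stays bounded away from zero wherever $|x_{11}(\infty,k)|<1$ (to first order in $q$ one has $x_{12}(\infty,k)\approx i\hat q(k)$); so the limit you must converge to is not zero. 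The auxiliary bound $\|x_{21}(t,\cdot)\|_{L^2(\mathbb{R})}\le\|q\|_{L^2[0,t]}$ ``by Plancherel'' is also unjustified, because the integrand $\bar q(\tau)x_{11}(\tau,k)$ depends on $k$, so the integral is not a Fourier transform of a fixed function. Your $j=1$ step then inherits these problems: to show $\|g_t\|_{H^2}\to\|g_\infty\|_{H^2}$ you invoke $|x_{11}(\infty,k)|^2+|x_{21}(\infty,k)|^2=1$ a.e.\ and the $L^2(\mathbb{R})$ convergence of $|x_{21}(t,\cdot)|$; the first is the content of Remark~2, which the paper \emph{derives from} this lemma (circularity), and the second is a statement of the same type and difficulty as the lemma itself, which you have not proved. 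Finally, the uniform $L^1$ bound on $1-\Re x_{11}(t,\cdot)$ gives no uniform-in-$t$ smallness of the tails $\int_{|k|>R}$, so the removal of the weight $(k^2+1)^{-1}$ is also not closed.

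The paper's route avoids all of this and is worth internalizing: it never identifies the limit or its modulus, but shows directly that $t\mapsto x_{1j}(t,\cdot)$ is Cauchy in $L^2(\mathbb{R})$. Writing $X(t_2,k)=X(t_1,t_2,k)X(t_1,k)$ gives
\[
x_{1j}(t_2,k)-x_{1j}(t_1,k)=\bigl(x_{11}(t_1,t_2,k)-1\bigr)x_{1j}(t_1,k)+x_{12}(t_1,t_2,k)\,x_{2j}(t_1,k),
\]
and since $|x_{ij}(t_1,k)|\le 1$ on $\mathbb{R}$, everything reduces to estimating the evolution on $[t_1,t_2]$, whose potential has small $L^2$ norm: the bound (\ref{este1}) with $p=2$ controls $\|x_{11}(t_1,t_2,\cdot)-1\|_2$, and $|x_{12}(t_1,t_2,k)|^2=1-|x_{11}(t_1,t_2,k)|^2\le -2\ln|x_{11}(t_1,t_2,k)|$ combined with (\ref{second}) controls $\|x_{12}(t_1,t_2,\cdot)\|_2$. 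The Cauchy limit then automatically coincides with the weak-$\ast$ limit $x_{1j}(\infty,\cdot)$. If you want to salvage a ``weak plus norms'' argument, you would still need a quantitative input of exactly this kind; the semigroup decomposition is the mechanism that supplies it.
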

\begin{proof}
Take arbitrary $t_1<t_2$ and notice that we have the following
semigroup property
\[
X(t_1,t_2,k)X(t_1,k)=X(t_2,k)
\]
Hence,

\[
x_{1j}(t_2,k)-x_{1j}(t_1,k)=(x_{11}(t_1,t_2,k)-1)x_{1j}(t_1,k)+x_{12}(t_1,t_2,k)x_{2j}(t_1,k)
\]
By (\ref{este1}),
\[
\|x_{11}(t_1,t_2,k)-1\|_2\to 0
\]
as $t_{1(2)}\to \infty$ and
\[
|x_{12}(t_1,t_2,k)|^2=1-|x_{11}(t_1,t_2,k)|^2\leq -2\ln
|x_{11}(t_1,t_2,k)|
\]
So, $\|x_{12}(t_1,t_2,k)\|_2\to 0$ by (\ref{second}). Thus,
$x_{1j}(t,k)$ is Cauchy in $L^2(\mathbb{R})$ and must have the limit
which will coincide with weak-$\ast$ limit $x_{1j}(\infty,k)$.

\end{proof}
{\bf Remark 2.}  By Remark 1, $e^{-ikt}x_{21}(t,k)$ and
$e^{-ikt}x_{22}(t,k)-1$ have limits in $L^2(\mathbb{R})$. They are
in fact related to boundary values of $H^\infty(\mathbb{C}^-)$
functions. Notice that
\[
|x_{11}(\infty,k)|^2+|x_{21}(\infty,k)|^2=1, \quad {\rm for \quad
a.e.} \quad k\in \mathbb{R}
\]
as follows from the a.e. convergence over some subsequence.

\section{The transport equation on the circle}
Another model very important for us is the transport equation.
Consider
\begin{equation}
u_t=ku_x+q(t,x)u \label{transport}
\end{equation}
with the simplest initial condition $u(0,x)=1$. We can either say
that $x\in \mathbb{T}$ or $x\in \mathbb{R}$ but all functions are
$2\pi$--periodic in $x$.

Notice that on the Fourier side, this equation is
\[
\hat{u}_t=-ikD\hat{u}+\hat{q}(t)\ast \hat{u}, \quad
\hat{u}(0,n)=\delta_0
\]
where $D$ is diagonal: $(Dg)_n=ng(n)$ in $\ell^2(\mathbb{Z})$. We
will work under the assumption that
\begin{equation}
\hat{q}(t,0)=\int\limits_{\mathbb{T}} q(t,x)dx=0 \label{pred}
\end{equation}
There is no any loss of generality since we can always satisfy this
condition by subtracting $\hat{q}(t,0)I$ which corresponds to
unimodular factor for $u(t,x)$.

{\bf Remark 3.} Let $P_{\{0,1\}}$ be the Fourier projection onto the
zeroth and the first modes, $P_{\{0,1\}}^c=I-P_{\{0,1\}}$. Then,
\[
(P_{\{0,1\}}u)_t=k(P_{\{0,1\}}u)_x+(P_{\{0,1\}}qP_{\{0,1\}})(P_{\{0,1\}}u)+
P_{\{0,1\}}qP_{\{0,1\}}^cu
\]
Notice that if one drops the third term in the r.h.s., then the
equation becomes equivalent to the model considered in the previous
section.

Of course, the solution to (\ref{transport}) can be written
explicitly. After periodization, we have
\[
u(t,x,k)=\exp\left[ \int\limits_0^t q(\tau, x+k(t-\tau))d\tau\right]
\]
If, e.g., $ q, q_x\in C(\mathbb{R}^+,\mathbb{T})$, then this is the
classical solution. The meaningful question is to study the
asymptotics of $u(t,x-kt)$ or, rather,
\[
\phi(t,x,k)=\int\limits_0^t q(\tau, x-k\tau)d\tau
\]
On the Fourier side (with respect to $x$ coordinate),
\[
\hat\phi(t,n,k)=\int\limits_0^t e^{in\tau k} \hat{q}(\tau,n)d\tau
\]
and
\[
\hat\phi(t,0,k)=0, \quad t>0
\]
due to assumption (\ref{pred}). We have the following
\begin{lemma} If $q(t,x)\in L^2(\mathbb{R}^+,\mathbb{T})$ and
$\displaystyle \int_\mathbb{T} q(t,x)dx=0$, then
\begin{itemize}
\item[1.] There is $\phi(\infty,x,k)$ such that
\[
\int\limits_{\mathbb{R}}
\|\phi(t,x,k)-\phi(\infty,x,k)\|^2_{H^{1/2}(\mathbb{T})}dk\to 0
\]
as $t\to\infty$.

\item[2.] For a.e. $k$,
\[
\|\phi(t,x,k)-\phi(\infty,x,k)\|_{H^{1/2}(\mathbb{T})}\to 0 \quad
t\to\infty
\]

\end{itemize}
\end{lemma}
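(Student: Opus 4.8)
The plan is to pass to the Fourier side in the variable $x$, treat each mode $n$ separately, and recognize $\hat\phi(t,n,k)$ as a truncated Fourier transform on the line. Set $g_n(\tau)=\hat q(\tau,n)\chi_{[0,\infty)}(\tau)$. By the hypothesis $\int_\mathbb{T}q(t,x)\,dx=0$ we have $g_0\equiv0$; each $g_n\in L^2(\mathbb{R})$, and $\sum_{n\ne0}\|g_n\|_{L^2(\mathbb{R})}^2=\int_0^\infty\|q(\tau,\cdot)\|_{L^2(\mathbb{T})}^2\,d\tau<\infty$. The given identity $\hat\phi(t,n,k)=\int_0^t e^{in\tau k}g_n(\tau)\,d\tau$ exhibits $\hat\phi(t,n,k)$ as the Fourier transform of $g_n\chi_{[0,t]}$ evaluated at the frequency $nk$. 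I define $\hat\phi(\infty,n,k)=\widehat{g_n}(nk)$ (for $n\ne0$ this is defined for a.e.\ $k$ by Plancherel, and $\hat\phi(\infty,0,k)=0$), and let $\phi(\infty,\cdot,k)$ be the function on $\mathbb{T}$ with these Fourier coefficients.

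For part 1 the key observation is that $H^{1/2}$ is exactly the space in which the weight $\langle n\rangle$ appearing in the norm is absorbed by the Jacobian $|n|$ of the substitution $s=nk$. By Plancherel and this change of variables, for $n\ne0$,
\[
\int\limits_\mathbb{R}\bigl|\hat\phi(t,n,k)-\hat\phi(\infty,n,k)\bigr|^2\,dk
=\frac{2\pi}{|n|}\,\bigl\|g_n\chi_{[0,t]}-g_n\bigr\|_{L^2(\mathbb{R})}^2
=\frac{2\pi}{|n|}\int\limits_t^\infty|\hat q(\tau,n)|^2\,d\tau .
\]
Multiplying by $\langle n\rangle$, summing over $n\ne0$, and using $\langle n\rangle/|n|\lesssim1$ for $|n|\ge1$, I obtain
\[
\int\limits_\mathbb{R}\|\phi(t,\cdot,k)-\phi(\infty,\cdot,k)\|_{H^{1/2}(\mathbb{T})}^2\,dk
\lesssim\int\limits_t^\infty\|q(\tau,\cdot)\|_{L^2(\mathbb{T})}^2\,d\tau\to0,\qquad t\to\infty ,
\]
which is the first assertion. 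The same computation with $t=\infty$ shows $\int_\mathbb{R}\|\phi(\infty,\cdot,k)\|_{H^{1/2}(\mathbb{T})}^2\,dk<\infty$, so $\phi(\infty,\cdot,k)\in H^{1/2}(\mathbb{T})$ for a.e.\ $k$.

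For part 2 I would invoke the Carleson--Hunt theorem for the one-sided truncated Fourier transform on $\mathbb{R}$: with $C^{\ast}g(s)=\sup_{t>0}\bigl|\int_0^t e^{is\tau}g(\tau)\,d\tau\bigr|$ one has $\int_0^t e^{is\tau}g(\tau)\,d\tau\to\widehat g(s)$ for a.e.\ $s$ and $\|C^{\ast}g\|_{L^2(\mathbb{R})}\lesssim\|g\|_{L^2(\mathbb{R})}$. Applied to $g_n$ and combined with the substitution $s=nk$, this gives, for each $n\ne0$, that $\hat\phi(t,n,k)\to\hat\phi(\infty,n,k)$ for a.e.\ $k$; moreover, setting $M_n(k)=C^{\ast}g_n(nk)$,
\[
\int\limits_\mathbb{R}\sum_{n\ne0}\langle n\rangle\,M_n(k)^2\,dk
\lesssim\sum_{n\ne0}\frac{\langle n\rangle}{|n|}\,\|g_n\|_{L^2(\mathbb{R})}^2
\lesssim\int\limits_0^\infty\|q(\tau,\cdot)\|_{L^2(\mathbb{T})}^2\,d\tau<\infty .
\]
Hence for a.e.\ $k$ one has simultaneously $\sum_{n\ne0}\langle n\rangle M_n(k)^2<\infty$ and $\hat\phi(t,n,k)\to\hat\phi(\infty,n,k)$ for every $n$; since $|\hat\phi(t,n,k)-\hat\phi(\infty,n,k)|\le2M_n(k)$, dominated convergence in the counting measure on $\{n\ne0\}$ yields
\[
\|\phi(t,\cdot,k)-\phi(\infty,\cdot,k)\|_{H^{1/2}(\mathbb{T})}^2=\sum_{n\ne0}\langle n\rangle\,\bigl|\hat\phi(t,n,k)-\hat\phi(\infty,n,k)\bigr|^2\to0
\]
for a.e.\ $k$. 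The main obstacle is precisely this last step: part 1 is elementary (Plancherel plus a scaling), whereas part 2 needs a pointwise-convergence input of Carleson--Hunt strength together with its maximal inequality in order to bring the limit inside the sum over the modes $n$.
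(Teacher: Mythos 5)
Your argument is correct and follows essentially the same route as the paper: part 1 is the identical Plancherel-plus-scaling computation, and part 2 rests on the same Carleson--Hunt maximal inequality applied modewise with the Jacobian $|n|$ from the substitution $s=nk$ absorbing the $H^{1/2}$ weight. The only cosmetic difference is that you invoke the a.e.-convergence form of Carleson's theorem and then apply dominated convergence in $n$, whereas the paper extracts the a.e. statement in $k$ from a monotone majorant $h_m(t,k)$ (built from $\sup_{\tau_1,\tau_2>t}$ of the partial integrals) whose $k$-integral tends to zero, i.e.\ a Cauchy-criterion version of the same estimate.
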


\begin{proof}
The proof is a trivial calculation. We have
\begin{eqnarray*}
\int\limits_\mathbb{R} \|\phi(t,x,k)\|_{H^{1/2}(\mathbb{T})}^2dk=\hspace{4cm}
\\
=\sum_{n\neq 0} |n|\int\limits_{\mathbb{R}} \left| \int\limits_0^t
e^{ink\tau}
\hat{q}(\tau,n)d\tau\right|^2dk=(2\pi)^2\int\limits_\mathbb{T}dx\int\limits_0^t
|q(\tau,x)|^2d\tau
\end{eqnarray*}
by Plancherel. If $\phi(\infty,x,k)$ is defined as function on
$\mathbb{T}\times \mathbb{R}$ having Fourier coefficients
\[
\int\limits_0^\infty e^{in\tau k}\hat{q}(\tau,n)d\tau
\]
then we easily have the first statement of the lemma.

For the second part, it is sufficient to show that for a.e. $k$ we
have
\[
h(t_1,t_2,k)=\sum\limits_{n\in \mathbb{Z}}
|n|\left|\int\limits_{t_1}^{t_2} e^{in\tau k}\hat{q}(\tau,n)d\tau
\right|^2\to 0, \quad t_{1(2)}\to+\infty
\]
since then the convergence will follow from the Cauchy criterion.
Let us introduce
\[
h_m(t,k)=\sum\limits_{n\in \mathbb{Z}} |n|\left(\sup\limits_{\tau_1,
\tau_2>t}\left|\int\limits_{\tau_1}^{\tau_2} e^{in\tau
k}\hat{q}(\tau,n)d\tau \right|^2\right)
\]
We have
\[
0\leq h(t_1,t_2,k)\leq h_m(t,k), \quad t<t_1, t_2
\]
and $h_m(t,k)$ is decreasing in $t$ (if it exists). Moreover,
\[
\int\limits_\mathbb{R} h_m(t,k) dk=\sum\limits_{n\in \mathbb{Z}}
|n|\int\limits_{-\infty}^\infty\left(\sup\limits_{\tau_1,
\tau_2>t}\left|\int\limits_{\tau_1}^{\tau_2} e^{in\tau
k}\hat{q}(\tau,n)d\tau \right|^2\right)dk
\]
\begin{eqnarray*}
=\sum\limits_{n\neq 0}
\int\limits_{-\infty}^\infty\left(\sup\limits_{\tau_1,
\tau_2>t}\left|\int\limits_{\tau_1}^{\tau_2} e^{i\tau\xi
}\hat{q}(\tau,n)d\tau \right|^2\right)d\xi\lesssim
\sum\limits_{n\neq 0} \int\limits_t^\infty |\hat{q}(\tau,n)|^2d\tau\\=
2\pi\|q\|^2_{L^2([t,\infty)\times\mathbb{T})}\to 0
\end{eqnarray*}
Here we used the standard Carleson estimate for the maximal function
\cite{Carl}. Since $h_m(t,k)$ is monotonic, we have $h_m(t,k)\to 0$
for a.e. $k$.
\end{proof}

We get the following simple corollary
\begin{corollary}
If $q(t,x)\in L^2(\mathbb{R}^+,\mathbb{T})$ and $\displaystyle
\int_\mathbb{T} q(t,x)dx=0$, then for a.e. $k$ we have
$u(t,x-kt,k)\to \nu(x,k)$ in the following sense
\[
\|u(t,x-kt,k)-\nu(x,k)\|^2_{L^2(\mathbb{T})} \to 0, \quad t\to
\infty
\]
If we also have $q(t,x)\in i\mathbb{R}$, then for a.e. $k$ there is
$\nu(x,k)\in H^{1/2}(\mathbb{T})$ such that
\[
\|u(t,x-kt,k)-\nu(x,k)\|^2_{H^{1/2}(\mathbb{T})} \to 0, \quad t\to
\infty
\]
\end{corollary}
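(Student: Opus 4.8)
The plan is to reduce both statements to the explicit formula $u(t,x-kt,k)=e^{\phi(t,x,k)}$ recorded in Section~2, together with the convergence $\phi(t,\cdot,k)\to\phi(\infty,\cdot,k)$ in $H^{1/2}(\mathbb{T})$ for a.e.\ $k$ supplied by the preceding lemma. Put $\nu(x,k)=e^{\phi(\infty,x,k)}$ and $\eta_t(\cdot,k)=\phi(t,\cdot,k)-\phi(\infty,\cdot,k)$; then for a.e.\ $k$ one has that $\phi(t,\cdot,k)$, $\phi(\infty,\cdot,k)$ and $\eta_t$ all have zero mean on $\mathbb{T}$ (because $\hat q(\tau,0)=0$), that $\phi(\infty,\cdot,k)\in H^{1/2}(\mathbb{T})$, and that $\|\eta_t\|_{H^{1/2}(\mathbb{T})}\to 0$, whence also $\sup_t\|\phi(t,\cdot,k)\|_{H^{1/2}(\mathbb{T})}<\infty$.

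For the $L^2$ statement I would use the borderline Sobolev embedding $H^{1/2}(\mathbb{T})\hookrightarrow\exp L^2(\mathbb{T})$ (the Moser--Trudinger inequality): there are $\alpha_0,C_0>0$ with $\int_{\mathbb{T}}e^{\alpha_0|f|^2}\,dx\le C_0$ whenever $f$ has zero mean and $\|f\|_{H^{1/2}(\mathbb{T})}\le 1$; consequently $\|e^{|f|}\|_{L^p(\mathbb{T})}$ is controlled in terms of $\|f\|_{H^{1/2}}$ and $p$ for every finite $p$. From $e^{\phi(t)}-e^{\phi(\infty)}=e^{\phi(\infty)}(e^{\eta_t}-1)$ and $|e^{w}-1|\le|w|e^{|w|}$ we get the pointwise bound $|e^{\phi(t)}-e^{\phi(\infty)}|\le e^{|\phi(\infty)|}|\eta_t|e^{|\eta_t|}$, and a three--factor H\"older inequality (exponents $12,6,4$) dominates $\|e^{\phi(t)}-e^{\phi(\infty)}\|_{L^2(\mathbb{T})}$ by $\|e^{|\phi(\infty)|}\|_{L^{12}}\,\|\eta_t\|_{L^{6}}\,\|e^{|\eta_t|}\|_{L^{4}}$. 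Here the first factor is finite for a.e.\ $k$ (Moser--Trudinger applied to $\phi(\infty,\cdot,k)$), the last stays bounded once $\|\eta_t\|_{H^{1/2}}\le 1$, and $\|\eta_t\|_{L^{6}}\lesssim\|\eta_t\|_{H^{1/2}}\to 0$ by $H^{1/2}(\mathbb{T})\hookrightarrow L^{6}(\mathbb{T})$. This proves $\|u(t,x-kt,k)-\nu(x,k)\|_{L^2(\mathbb{T})}\to 0$ for a.e.\ $k$; in particular $\nu(\cdot,k)\in L^2(\mathbb{T})$, and if $q\in i\mathbb{R}$ then $|\nu(x,k)|=1$ a.e.

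When $q\in i\mathbb{R}$, write $\phi(t,x,k)=i\theta_t(x,k)$ with $\theta_t$ real; then $u(t,x-kt,k)=e^{i\theta_t}$ has modulus one, $\|u(t,x-kt,k)\|_{L^2(\mathbb{T})}^2=2\pi$ is constant, and $L^2(\mathbb{T})$ convergence follows from $|e^{i\theta_t}-e^{i\theta_\infty}|\le|\theta_t-\theta_\infty|$. What remains is convergence of the homogeneous seminorm. Using the Gagliardo form $\|g\|_{\dot H^{1/2}(\mathbb{T})}^2\sim\iint_{\mathbb{T}^2}|g(x)-g(y)|^2|x-y|^{-2}\,dx\,dy$, the identity $|e^{ia}-e^{ib}|^2=4\sin^2\tfrac{a-b}{2}$, and $\sin^2u-\sin^2v=\sin(u+v)\sin(u-v)$, the difference of seminorms squared equals, up to a constant, $\iint_{\mathbb{T}^2}\frac{[(\theta_t+\theta_\infty)(x)-(\theta_t+\theta_\infty)(y)]\,[\eta_t(x)-\eta_t(y)]}{|x-y|^2}\,dx\,dy$, so Cauchy--Schwarz gives
\[
\bigl|\,\|e^{i\theta_t}\|_{\dot H^{1/2}(\mathbb{T})}^2-\|e^{i\theta_\infty}\|_{\dot H^{1/2}(\mathbb{T})}^2\,\bigr|\ \lesssim\ \|\theta_t+\theta_\infty\|_{\dot H^{1/2}(\mathbb{T})}\,\|\eta_t\|_{\dot H^{1/2}(\mathbb{T})}\ \longrightarrow\ 0,
\]
since the first factor stays bounded for a.e.\ $k$ and the second vanishes. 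Hence $\|u(t,x-kt,k)\|_{H^{1/2}(\mathbb{T})}\to\|\nu(\cdot,k)\|_{H^{1/2}(\mathbb{T})}$, so $\nu(\cdot,k)\in H^{1/2}(\mathbb{T})$; and since $\{u(t,x-kt,k)\}$ is bounded in the Hilbert space $H^{1/2}(\mathbb{T})$ and already converges to $\nu(\cdot,k)$ in $L^2(\mathbb{T})$, it converges to $\nu(\cdot,k)$ weakly in $H^{1/2}(\mathbb{T})$; weak convergence together with convergence of norms yields strong convergence in $H^{1/2}(\mathbb{T})$, as desired.

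The step I expect to be the real obstacle is the $H^{1/2}$ convergence in the second part: estimating $\|e^{i\theta_t}-e^{i\theta_\infty}\|_{\dot H^{1/2}}$ head-on through the double integral leaves a term $\int_{\mathbb{T}}|\eta_t|^2G\,dx$ with $G$ (a square function of $\theta_\infty$) only known to lie in $L^1(\mathbb{T})$, and since $H^{1/2}(\mathbb{T})$ does not embed into $L^\infty(\mathbb{T})$ there is no soft reason for this to tend to zero. The trigonometric identity circumvents the issue by making the \emph{seminorm} difference bilinear, after which the Hilbert-space fact ``weak convergence $+$ norm convergence $\Rightarrow$ strong convergence'' closes the argument; in the first part the parallel difficulty --- that \emph{a priori} $\nu(\cdot,k)$ need not even lie in $L^2$ --- is precisely what the Moser--Trudinger exponential integrability of $H^{1/2}$ functions resolves.
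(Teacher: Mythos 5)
Your proof is correct, and it follows the paper's overall strategy --- reduce to the explicit formula $u(t,x-kt,k)=e^{\phi(t,x,k)}$ together with the a.e.-$k$ convergence of $\phi(t,\cdot,k)$ in $H^{1/2}(\mathbb{T})$, then invoke continuity properties of the exponential map --- but the two continuity steps are carried out differently from the paper's Appendix. For the $L^2$ statement the paper proves exponential integrability directly (Lemma~\ref{al1}: $\|f^n\|_2\lesssim (Cn)^{n/2}\|f\|_{H^{1/2}}^n$ via Young's inequality for convolutions, then Stirling), which is exactly the Moser--Trudinger bound you invoke as a black box; your H\"older argument with exponents $(12,6,4)$ is an equivalent way to extract continuity from it. The genuine divergence is in the $H^{1/2}$ part: the paper's Lemma~\ref{al2} estimates $\|e^{if_n}-e^{if}\|_{H^{1/2}}$ head-on through the Gagliardo double integral and confronts precisely the problematic term $\int |F_n|^2 G$ with $G\in L^1$ that you flag, disposing of it by splitting $F_n=e^{i(f_n-f)}-1$ into a uniformly small piece and a bounded piece supported on a set of vanishing measure. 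You circumvent that term entirely: the identity $\sin^2 u-\sin^2 v=\sin(u+v)\sin(u-v)$ makes the difference of Gagliardo seminorms bilinear, giving convergence of norms, after which weak convergence (from $L^2$ convergence plus boundedness in $H^{1/2}$) upgrades to strong convergence in the Hilbert space. Both routes are valid; the paper's yields the quantitative continuity of the exponential map as a reusable lemma, while yours is softer but self-contained for the specific convergence claim. One cosmetic point: the difference of seminorms does not literally \emph{equal} the bilinear double integral you display --- it equals the integral of $4\sin(u+v)\sin(u-v)\,|x-y|^{-2}$, which is then \emph{bounded} by that expression via $|\sin w|\le |w|$ before Cauchy--Schwarz --- but the inequality you ultimately derive is correct.
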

\begin{proof}
We know for a.e. $k$ the limit $\phi(\infty,x,k)$ exists as a
function in  $H^{1/2}(\mathbb{T})$. All other statements follow from
Lemmas \ref{al1}, \ref{al2} in Appendix.
\end{proof}
{\bf Remark 4.} Similarly, one can show
\begin{equation}
\int\limits_\mathbb{R}
\|u(t,x-tk,k)-\nu(x,k)\|_{H^{1/2}(\mathbb{T})}^2dk\to 0,
\end{equation}
provided that $q\in i\mathbb{R}$.  It follows from lemma \ref{al2},
estimates on the maximal function, and dominated convergence
theorem. Moreover, for a.e. $k$, all Fourier coefficients of
$u(t,x-kt,k)$ converge as $t\to\infty$ regardless of whether $q$ is
purely imaginary or not.
\bigskip

{\bf Remark 5.}  We considered the simplest case of initial data,
i.e. $u(0,x,k)=1$. The general case $u(0,x,k)=f(x)$ is almost
identical due to multiplicative structure of the problem. If the
potential is square summable and purely imaginary, then we have the
full measure set of $k$ (that depends only on $q$) for which the
equation is globally well-posed for $f$ in the Krein algebra
$L^\infty(\mathbb{T})\cap H^{1/2}(\mathbb{T})$ \cite{BS}. We also
have the stability and the asymptotics at infinity. \bigskip\bigskip

There is an instructive case $q(t,x)=2q(t)\cos x$ with $q(t)$ --
purely imaginary square summable on $\mathbb{R}$. In this situation,

\begin{equation}
\nu(x,k)=\exp\left[\left(
e^{-ix}\hat{q}(k)-e^{ix}\overline{\hat{q}(k)}\right)\right]
\label{inst}
\end{equation}
where
\[
\hat{q}(k)=\int\limits_0^\infty e^{ik\tau}q(\tau)d\tau
\]
Notice that for a.e. $k$ the function $\nu(x,k)$ is infinitely
smooth. Moreover, expanding into the Taylor series,
\[
\int_\mathbb{T} \nu(x,k)dx=H(|\hat{q}(k)|)
\]
with
\[
H(z)=\sum\limits_{n=0}^\infty \frac{(-z^2)^{n}}{(n!)^2}=J_0(2z)
\]
Notice that since the Bessel function $J_0(z)$ has  positive zeroes
(\cite{as}, Chapter 9), it is possible to choose $q$ such that
\[
\hat\nu(0,k)=\int_\mathbb{T} \nu(x,k)dx=0
\]
on arbitrary interval $k\in I$ which means there is no hope to get
\begin{equation}
\int\limits_I \ln |\hat \nu(0,k)|dk>-\infty \label{never}
\end{equation}\bigskip\bigskip

Consider the case when the transport equation is given on the
cylinder of large size $2\pi h$
\[u_t=ku_x+q(t,x)u, \quad u(0,x)=1,
\]
and $u$ is $h$--periodic in $x$, $q$ is purely imaginary. Scaling in
$x$ gives
\[
\psi_t=kh^{-1}\psi_\theta+\tilde{q}(t,\theta)\psi, \quad
\psi(0,\theta)=1
\]
where $\psi(t,\theta,k)=u(t,h\theta,k)$,
$\tilde{q}(t,\theta)=q(t,h\theta)$. For the new differential
operator, $ih^{-1}\partial_x$, the gaps in the spectrum are of the
size $h^{-1}$ but, nevertheless, we have

\begin{equation}
h^{-1}\int\limits_\mathbb{R}
\|u(T,hx,k)\|^2_{H^{1/2}(\mathbb{T})}dk\lesssim
h^{-1}\left(1+\int\limits_0^T \int\limits_0^h q^2(t,x)dxdt\right)
\label{evid}
\end{equation}
by scaling. The r.h.s. measures the $L^2$ norm in time of the space
averages of $q$. If it is bounded, then $H^{1/2}$ norm of
$u(T,x,k)$, when averaged over $(0,h)$, is bounded for most $k$. We
expect this phenomenon for general situation when the gap condition
deteriorates.

The calculations presented in this section can be easily carried out
for the case when $q$ is more regular, e.g. $q\in
L^2(\mathbb{R}^+,H^{1/2}(\mathbb{T}))$. That will lead to better
regularity of the solution.

\section{ The model case of $N\times N$ system}
In this section, we consider the following evolution
\[
X_t=i(k\Lambda+V(t))X, \quad X(0)=I,\quad V^*=V
\]
and
\[
\Lambda=\left[
\begin{array}{cccc}
\lambda_1 & 0 & \ldots  &0\\
0 & \lambda_2 & \ldots &\ldots\\
\ldots & \ldots & \ldots & \ldots\\ 0 & 0 & \ldots & \lambda_N
\end{array}
\right]
\]
with $0=\lambda_1< \lambda_2< \ldots<\lambda_N$. Sometimes we will
allow the eigenvalues to degenerate, that will require more careful
analysis. Denote $\delta_j=\lambda_{j+1}-\lambda_j, \quad j=1,
\ldots , N-1$. We will also assume that $V(t)$ is locally integrable
on $\mathbb{R}^+$ and that $V_{jj}(t)=0$ for all $j$. The last
assumption can be made without loss of generalization. It is obvious
that $X(t,k)=\{x_{mn}(t,k), \, 1\leq m,n\leq N\}$ is unitary for
real $k$. For general $k$, the following lemma holds true.
\begin{lemma}
For any $V\in L^1_{\rm loc}(\mathbb{R}^+)$, we have

\[
|X(t,k)|^2+2\Im k \int\limits_0^t X^*(\tau,k)\Lambda
X(\tau,k)d\tau=I,\quad k\in \mathbb{C}
\]
and
\begin{equation}
0\leq \int\limits_0^\infty X^*(\tau,k)\Lambda X(\tau,k)d\tau\leq
(2\Im k)^{-1}, \quad |X(t,k)|\leq I,\quad \quad \Im k>0
\label{est-m1}
\end{equation}
Moreover, $|X(t,k)|$ and $|X(t,k)|^2$ decay monotonically in $t$ for
$k\in \mathbb{C}^+$.
\end{lemma}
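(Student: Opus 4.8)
The plan is to differentiate the quadratic form $t\mapsto \langle X(t,k)a, X(t,k)a\rangle$ for an arbitrary fixed vector $a\in\mathbb{C}^N$ and use the equation $X_t=i(k\Lambda+V(t))X$ together with $V^*=V$. Concretely, write $Q(t)=X^*(t,k)X(t,k)$; then
\[
Q_t = X_t^*X + X^*X_t = -i(\bar k\Lambda+V^*)X^*X + i X^*(k\Lambda+V)X = i(k-\bar k)X^*\Lambda X = -2\,\Im k\,X^*\Lambda X,
\]
using $\Lambda^*=\Lambda$, $V^*=V$, and that $\Lambda$ commutes through as a real diagonal matrix. Integrating from $0$ to $t$ with $Q(0)=I$ gives exactly $|X(t,k)|^2 + 2\,\Im k\int_0^t X^*(\tau,k)\Lambda X(\tau,k)\,d\tau = I$, which is the first displayed identity (here $|X|^2$ denotes $X^*X$ as a positive operator). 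The same computation is valid for all $k\in\mathbb{C}$ since everything is finite-dimensional and $V\in L^1_{\rm loc}$ guarantees a locally absolutely continuous solution $X$.

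Next I would extract the inequalities in (\ref{est-m1}) for $\Im k>0$. Since $0=\lambda_1\le\lambda_2\le\cdots\le\lambda_N$, the matrix $\Lambda$ is positive semidefinite, so $X^*(\tau,k)\Lambda X(\tau,k)\ge 0$ for every $\tau$; hence the integral term is a nonnegative (in the operator order) increasing function of $t$. From the identity this forces $|X(t,k)|^2 = I - 2\,\Im k\int_0^t X^*\Lambda X\,d\tau \le I$ for $\Im k>0$, and letting $t\to\infty$ (the left side is bounded below by $0$, so the monotone integral converges) yields $0\le \int_0^\infty X^*(\tau,k)\Lambda X(\tau,k)\,d\tau \le (2\,\Im k)^{-1}I$. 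The bound $|X(t,k)|\le I$ follows from $|X(t,k)|^2\le I$ by monotonicity of the square root on positive operators.

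Finally, for the monotonic decay claim: from $Q_t=-2\,\Im k\,X^*\Lambda X\le 0$ (as an operator inequality, for $\Im k>0$) we immediately get that $t\mapsto Q(t)=|X(t,k)|^2$ is nonincreasing in the operator order, i.e. $|X(t_2,k)|^2\le |X(t_1,k)|^2$ for $t_1\le t_2$. Since $A\mapsto A^{1/2}$ is operator monotone on positive semidefinite matrices, $|X(t,k)|=(|X(t,k)|^2)^{1/2}$ is also nonincreasing. I do not anticipate a genuine obstacle here; the only point requiring a little care is the meaning of ``decay monotonically'' — it should be read in the sense of the Loewner order on Hermitian matrices (equivalently, $\langle |X(t,k)|a,a\rangle$ is nonincreasing in $t$ for each fixed $a$), and one should note the differentiation of $Q$ is justified because $X$ is locally absolutely continuous, so the fundamental theorem of calculus applies to the matrix-valued integral.
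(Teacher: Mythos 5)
Your proposal is correct and is precisely the argument the paper has in mind — the paper's own proof is the one-line remark that the lemma is ``a trivial corollary from the differential equation itself and monotonicity of the square root,'' and you have supplied exactly that: differentiate $X^*X$, use $V^*=V$ and $\Lambda\ge 0$, integrate, and invoke operator monotonicity of $A\mapsto A^{1/2}$. (Only a cosmetic slip: $X_t^*=-iX^*(\bar k\Lambda+V)$, with $X^*$ on the left, but your resulting formula for $Q_t$ is the right one.)
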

\begin{proof}
The proof is a trivial corollary from the differential equation
itself and monotonicity of the square root.
\end{proof}

\begin{lemma} \label{lemochka}
Assume that $q(t)=\|V(t)e_1\|$ belongs to $L^2(\mathbb{R}^+)$. Fix
any $f\in \mathbb{C}^N$ with $\|f\|=1$.
\begin{itemize}
\item[(a)] We have $X(t,k)f\to \pi_f(k)e_1$, as $t\to \infty$ uniformly on
compacts in $\mathbb{C}^+$.
\item[(b)] For $k\in \mathbb{C}^+$, $|\langle X(t,k)f, e_1\rangle|
\geq |\langle f,e_1\rangle|-
(2\lambda_1\Im k)^{-1/2}\cdot\|q\|_2$ and
\[
\left(\int\limits_0^\infty \|P_1^c
X(t,k)f\|^2dt\right)^{1/2}\lesssim (\lambda_1\Im
k)^{-1}\|q\|_2+(\lambda_1 \Im k)^{-1/2}\|P_1^cf\|
\]

\item[(c)] $\langle
X(t,k)f, e_1\rangle$ converges to $\pi_f(k)$ on $\mathbb{R}$ in the
weak-$\ast$ sense. If $\langle f,e_1\rangle\neq 0$, then $\pi_f(k)$
is nonzero function in  the unit ball in $H^\infty(\mathbb{C}^+)$
and so

\[
\int\limits_{-\infty}^\infty \frac{\ln |\pi_f(k)|}{k^2+1}dk>-\infty
\]

\item[(d)] For $x_{11}(t,k)$, we have $x_{11}(t,k)\to
x_{11}(\infty,k)$ uniformly over compacts in $\mathbb{C}^+$.
Moreover,

\begin{equation}
 \int\limits_\mathbb{R} \ln
|x_{11}(\infty,k)|dk\gtrsim -\lambda_1^{-1}\|q\|_2^2
 \label{second2}
\end{equation}
and
\begin{equation}
\left[1-x_{11}(\infty,k)\right]_{L^{1,w}(\mathbb{R})}\lesssim
\|q\|_2^2, \quad \|1-x_{11}(\infty,k)\|_{L^{p}(\mathbb{R})}\leq
C(p)\|q\|_2^{2/p}, \quad 1<p<\infty \label{e15}
\end{equation}

\end{itemize}
\end{lemma}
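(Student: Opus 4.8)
\medskip
\noindent\emph{Proof plan.} Write $Y(t)=X(t,k)f$ and decompose $Y=y_1e_1+P_1^cY$ with $y_1=\langle Y,e_1\rangle$. Since $V(t)$ has zero diagonal, $V(t)e_1\perp e_1$ and $\|V(t)e_1\|=q(t)$, so the system splits as
\[
\dot y_1=i\langle P_1^cY,\,V(t)e_1\rangle,\qquad (P_1^cY)_t=i\bigl(k\Lambda+W(t)\bigr)P_1^cY+i\,y_1\,P_1^cV(t)e_1,
\]
where $W(t):=P_1^cV(t)P_1^c$ is Hermitian on $\mathrm{ran}\,P_1^c$. The decisive remark is that applying (\ref{est-m1}) to the vector $f$, together with $\lambda_1=0$ and $\lambda_j\ge\lambda_2$ for $j\ge2$, gives the a priori bound
\[
\int_0^\infty\|P_1^cY(\tau)\|^2\,d\tau\le(2\lambda_2\Im k)^{-1},\qquad\Im k>0,
\]
which does not see the possibly large bulk $W$ at all, precisely because $iW$ is skew-Hermitian and drops out of the energy balance.

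Granting this, parts (a)--(c) follow the $2\times2$ template of Lemma~\ref{lemka}. By Cauchy--Schwarz, $\int_0^\infty|\langle P_1^cY,V(t)e_1\rangle|\,d\tau\le\|P_1^cY\|_{L^2}\|q\|_2<\infty$, so $y_1(t)$ has a limit $\pi_f(k)$, and integrating $\dot y_1$ from $0$ gives at once the lower bound for $|\langle X(t,k)f,e_1\rangle|$ in (b). For the $L^2$-in-$t$ bound on $P_1^cY$ and for $P_1^cY(t)\to0$ I would use Duhamel with the propagator $\mathcal U(t,s)$ of $i(k\Lambda+W)$ on $\mathrm{ran}\,P_1^c$: there $ik\Lambda$ has real part $\le-\lambda_2\Im k$ while $iW$ is skew, hence $\|\mathcal U(t,s)\|\le e^{-\lambda_2\Im k(t-s)}$ and
\[
\|P_1^cY(t)\|\le e^{-\lambda_2\Im k\,t}\,\|P_1^cf\|+\int_0^te^{-\lambda_2\Im k(t-\tau)}q(\tau)\,d\tau .
\]
Young's inequality in $t$ produces the $L^2$ estimate in (b), and splitting this convolution at a large fixed time gives $\|P_1^cY(t)\|\to0$; since all bounds are locally uniform on $\mathbb{C}^+$, this is the uniform-on-compacts convergence $X(t,k)f\to\pi_f(k)e_1$, with $\pi_f$ holomorphic. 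Part (c) is then verbatim as in Lemma~\ref{lemka}: $|y_1|\le1$ on $\overline{\mathbb{C}^+}$ forces $\pi_f$ into the unit ball of $H^\infty(\mathbb{C}^+)$, it is not identically zero when $\langle f,e_1\rangle\neq0$ because the (b)-bound makes $|\pi_f(iy)|>0$ for $y$ large, whence $\int(1+k^2)^{-1}\ln|\pi_f(k)|\,dk>-\infty$; the weak-$\ast$ statement comes from the uniform $H^2(\mathbb{C}^+)$-boundedness of $y_1(t,k)(k+i)^{-1}$ exactly as before.

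For (d), take $f=e_1$: then $\pi_{e_1}=x_{11}(\infty,\cdot)$, $q(t)=\|V(t)e_1\|$ is exactly the hypothesis, $\|P_1^ce_1\|=0$, so the Duhamel bound sharpens to $\|P_1^cY\|_{L^2[0,t]}\le(\lambda_2\Im k)^{-1}\|q\|_{L^2[0,t]}$ and hence $|x_{11}(t,k)|\ge1-(\lambda_2\Im k)^{-1}\|q\|_2^2$ for every $t$; letting $t\to\infty$ via (a) gives $|x_{11}(\infty,iy)|\ge1-(\lambda_2 y)^{-1}\|q\|_2^2$ and $|x_{11}(\infty,k)-1|\le(\lambda_2\Im k)^{-1}\|q\|_2^2$. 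Now $\ln|x_{11}(\infty,\cdot)|$ is subharmonic and $\le0$ on $\mathbb{C}^+$, so the sub-mean-value (Poisson) inequality at $k=iy$ together with the lower bound above gives, after multiplying by $y$ and letting $y\to\infty$ by monotone convergence, $-\tfrac1\pi\int_{\mathbb{R}}\ln|x_{11}(\infty,k)|\,dk\lesssim\lambda_2^{-1}\|q\|_2^2$, which is (\ref{second2}). For (\ref{e15}) put $h=x_{11}(\infty,\cdot)-1$: $|x_{11}(\infty,\cdot)|\le1$ on $\mathbb{C}^+$ yields $2\Re h\le-|h|^2\le0$ there and on $\mathbb{R}$, and inserting the two asymptotic bounds on $x_{11}(\infty,iy)$ into the Poisson representation of the bounded nonpositive harmonic function $\Re h$ gives the ``trace formula'' $-\int_{\mathbb{R}}\Re h\lesssim\lambda_2^{-1}\|q\|_2^2$; therefore $\|h\|_{L^2}^2\le-2\int_{\mathbb{R}}\Re h\lesssim\lambda_2^{-1}\|q\|_2^2$, and combined with $\|h\|_{L^\infty}\le2$ the weak-$L^1$ bound and the $L^p$ bounds follow by interpolation exactly as in Lemma~\ref{lemka}(e).

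The only point beyond the $2\times2$ case is that the potential is controlled solely through its first column $V(t)e_1$, while its bulk $W=P_1^cV(t)P_1^c$ may be arbitrarily large; this is overcome by the twofold use of the first spectral gap $\delta_1=\lambda_2-\lambda_1=\lambda_2$, which makes the energy identity (and hence the a priori control of $P_1^cY$) blind to $W$ and makes the Duhamel propagator of $i(k\Lambda+W)$ on $\mathrm{ran}\,P_1^c$ exponentially contracting in $\mathbb{C}^+$. (Consistently, the parameter printed as $\lambda_1$ in the denominators in (b) and in (\ref{second2}) should be read as this gap $\delta_1=\lambda_2$, since $\lambda_1=0$.)
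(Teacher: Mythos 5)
Your proposal is correct and follows essentially the same route as the paper: the scalar equation for $\langle X(t,k)f,e_1\rangle$ combined with the a priori $L^2_t$ bound on $P_1^cX(t,k)f$ from (\ref{est-m1}), a Duhamel/Gronwall contraction estimate on the $P_1^c$-block exploiting the skew-Hermitian bulk and the first spectral gap, and then the subharmonicity/trace-formula argument of Lemma~\ref{lemka} for (c) and (d). Your observation that the $\lambda_1$ appearing in the printed denominators must be read as the gap $\delta_1=\lambda_2$ (since $\lambda_1=0$) is also the correct reading of the paper's notation.
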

\begin{proof}
Denote $u(t,k)=X(t,k)f$. It is entire in $k$. We have $u_t=ik\Lambda
u+iVu,$ $u(0,k)=f$. Then,
\begin{equation}
\langle u(t,k),e_1\rangle=\langle f,e_1\rangle +i\int\limits_0^t
\langle u(\tau,k),V(\tau)e_1\rangle d\tau \label{rep2}
\end{equation}
Since $V_{11}(t)=0$, we have $\langle
u(\tau,k),V(\tau)e_1\rangle=\langle
P_1^cu(\tau,k),V(\tau)e_1\rangle$. From (\ref{est-m1}), we have
\[
\int\limits_0^\infty \|P_1^cu(\tau,k)\|^2d\tau\leq (2\lambda_1\Im
k)^{-1}
\]
Thus $\langle u(\tau,k),V(\tau)e_1\rangle\in L^1(\mathbb{R}^+)$ by
Cauchy-Schwarz and that proves convergence of $\langle
u(t,k),e_1\rangle$ to some $\pi_f(k)$ and
\begin{equation}
|\pi_f(k)-\langle f,e_1\rangle|\leq (2\lambda_1\Im k)^{-1/2} \|q\|_2
\label{estbn}
\end{equation}

Consider $\psi(t)=P_1^cu$. We have
\[
\psi_t=ik P_1^c\Lambda P_1^c \psi +iP_1^cVP_1^c\psi+iP_1^cVP_1u,
\quad \psi(0)=P_1^cf
\]
If
\[
l(t,k)=iP_1^cVP_1u=i\langle u,e_1\rangle P_1^cVe_1
\]
then
\[
\int_0^\infty \|l(\tau,k)\|^2d\tau\leq \|q\|_2^2
\]
since $|\langle u,e_1\rangle |\leq 1$. Thus,

\[
\frac{d}{dt} \left(\|\psi\|_2^2\right)\leq -2\lambda_1\Im k
\|\psi\|_2^2+2\|l\|\cdot\|\psi\|, \quad \|\psi(0,k)\|\leq 1
\]
and we have
\[
\| \psi(t,k)\|\lesssim e^{-\alpha t}\|P_1^cf\|+\int\limits_0^t
e^{-\alpha(t-\tau)}\|l(\tau)\|d\tau, \quad \alpha=\lambda_1\Im k
\]
So,
\begin{equation}
\left(\int\limits_0^\infty\|\psi(\tau,k)\|^2d\tau\right)^{1/2}\lesssim
(\lambda_1\Im k)^{-1}\|q\|_2+(\lambda_1\Im k)^{-1/2}\|P_1^cf\|
\label{ost}
\end{equation}
and $\|\psi(t,k)\|\to 0$ uniformly on compacts in $\mathbb{C}^+$.
That proves (a) through (b). The properties of $\pi_f(k)$ stated in
(c) follow from the mean-value inequality for subharmonic function
$\ln |\pi_f(k)|$ and (\ref{estbn}).

 If $f=e_1$, then (\ref{rep2}) and
(\ref{ost})
 yield

\[
|x_{11}(t,iy)-1|\lesssim \lambda_1^{-1}y^{-1}\|q\|_2^2
\]
The proof of (d) repeats the arguments in lemma \ref{lemka}.
\end{proof}
As a simple corollary of (c), we get existence of the weak-$\ast$
limits for $x_{1j}(t,k)$ on the real line ($j=2,\ldots, N$). Denote
them by $x_{1j}(\infty,k)$. The next lemma gives a stronger
convergence result and is an analog of lemma \ref{lem}
\begin{lemma}Assume that $q(t)=\|V(t)e_1\|$ belongs to $L^2(\mathbb{R}^+)$. Then,
\[
\int\limits_{-\infty}^\infty |x_{1j}(t,k)-x_{1j}(\infty,k)|^2dk\to
0, \quad j=1,\ldots, N
\]
as $t\to\infty$.
\end{lemma}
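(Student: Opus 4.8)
The plan is to mimic the proof of Lemma~\ref{lem} using the semigroup property of the fundamental matrix, but now with the quantitative estimates from Lemma~\ref{lemochka}. First I would fix $t_1<t_2$ and write $X(t_2,k)=X(t_1,t_2,k)X(t_1,k)$, where $X(t_1,t_2,k)$ solves the same equation on the interval $[t_1,t_2]$ with the translated potential $\widetilde V(\tau)=V(t_1+\tau)$. Projecting onto the first row, this gives
\[
x_{1j}(t_2,k)-x_{1j}(t_1,k)=\bigl(x_{11}(t_1,t_2,k)-1\bigr)x_{1j}(t_1,k)+\sum_{m=2}^N x_{1m}(t_1,t_2,k)\,x_{mj}(t_1,k).
\]
Since $|x_{1j}(t_1,k)|\le 1$ and $|x_{mj}(t_1,k)|\le 1$ on $\mathbb{R}$ (unitarity), the $L^2(\mathbb{R})$ norm of the difference is controlled by $\|x_{11}(t_1,t_2,k)-1\|_{L^2(\mathbb{R})}+\sum_{m\ge 2}\|x_{1m}(t_1,t_2,k)\|_{L^2(\mathbb{R})}$.

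Next I would bound each term on the right. For the diagonal term, apply \eqref{e15} to the shifted evolution: $\|1-x_{11}(t_1,t_2,k)\|_{L^p(\mathbb{R})}\le C(p)\|\widetilde q\|_2^{2/p}$ where $\widetilde q(\tau)=\|V(t_1+\tau)e_1\|=q(t_1+\tau)$, so $\|\widetilde q\|_2^2=\int_{t_1}^{t_2}|q(\tau)|^2\,d\tau\to 0$ as $t_1\to\infty$; taking $p=2$ gives $\|1-x_{11}(t_1,t_2,k)\|_2\to 0$. For the off-diagonal terms $x_{1m}(t_1,t_2,k)$ with $m\ge 2$, I would use the row version of Remark~2 / Lemma~\ref{lemochka}(a): on the real line the first row of $X(t_1,t_2,k)$ is a unit vector, so $\sum_{m\ge 2}|x_{1m}(t_1,t_2,k)|^2=1-|x_{11}(t_1,t_2,k)|^2\le -2\ln|x_{11}(t_1,t_2,k)|$ (using $|x_{11}|\le 1$). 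Integrating in $k$ and invoking the log-integral estimate \eqref{second2} applied to the shifted potential gives $\int_\mathbb{R}\sum_{m\ge 2}|x_{1m}(t_1,t_2,k)|^2\,dk\le 2\int_\mathbb{R}(-\ln|x_{11}(t_1,t_2,k)|)\,dk\lesssim \lambda_1^{-1}\int_{t_1}^{t_2}|q(\tau)|^2\,d\tau\to 0$. Hence the right-hand side tends to $0$ in $L^2(\mathbb{R})$ uniformly as $t_1,t_2\to\infty$, so $\{x_{1j}(t,k)\}_t$ is Cauchy in $L^2(\mathbb{R})$ and converges to some limit there; by uniqueness of weak-$\ast$ limits this limit is $x_{1j}(\infty,k)$, finishing the proof.

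The one point requiring a little care — and the main obstacle — is justifying that the estimates \eqref{second2} and \eqref{e15}, originally proved for the evolution started at $t=0$, apply verbatim to the evolution $X(t_1,t_2,k)$ on a shifted interval. This is immediate because $X(t_1,t_1+s,k)$ satisfies exactly the same Cauchy problem as $X(s,k)$ but with $V$ replaced by its shift $V(t_1+\cdot)$, which still has vanishing diagonal and satisfies $\|V(t_1+\cdot)e_1\|\in L^2$; moreover the relevant quantity in all the bounds is only $\|q\|_2^2$, which for the shifted potential equals the tail $\int_{t_1}^\infty|q(\tau)|^2\,d\tau$ (or $\int_{t_1}^{t_2}$), and this tends to $0$. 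One should also note that $\lambda_1$ in Lemma~\ref{lemochka} must be read as the smallest \emph{nonzero} gap-type quantity controlling $P_1^c\Lambda P_1^c$ (the paper's normalization has $\lambda_1=0$, so strictly one uses $\lambda_2$ here as the effective lower bound); this is a harmless bookkeeping issue and does not affect the argument, since all that matters is that the constant is independent of $t_1,t_2$. With that observation the proof is a direct transcription of the argument for Lemma~\ref{lem}.
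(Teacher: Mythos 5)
Your proof is correct and follows essentially the same route as the paper's: the semigroup decomposition into the diagonal term handled by \eqref{e15} and the off-diagonal terms controlled via $\sum_{m>1}|x_{1m}(t_1,t_2,k)|^2=1-|x_{11}(t_1,t_2,k)|^2\leq -2\ln|x_{11}(t_1,t_2,k)|$ together with \eqref{second2}, followed by the Cauchy criterion in $L^2(\mathbb{R})$ and identification with the weak-$\ast$ limit. Your explicit remarks about applying the estimates to the shifted evolution $X(t_1,t_2,k)$ and about the role of the smallest nonzero eigenvalue only make explicit what the paper leaves implicit.
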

\begin{proof}
For any $t_1<t_2$ the semigroup property yields
\[
x_{1j}(t_2,k)=\sum\limits_{m=1}^N x_{1m}(t_1,t_2,k)x_{mj}(t_1,k)
\]
where $X(t_1,t_2,k)$ has matrix elements $\{x_{ij}(t_1,t_2,k)\}$.
Therefore,
\[
x_{1j}(t_2,k)-x_{1j}(t_1,k)=\sum\limits_{m>1}
x_{1m}(t_1,t_2,k)x_{mj}(t_1,k)+(x_{11}(t_1,t_2,k)-1)x_{1j}(t_1,k)
\]
\[
=I_1+I_2
\]
By (\ref{e15}), we have
\[
\|I_2(k)\|_{L^2(\mathbb{R})}\to 0
\]
as $t_{1(2)}\to \infty$. The Cauchy-Schwarz and unitarity of $X$
give
\[
|I_1(k)|^2\leq 1-|x_{11}(t_1,t_2,k)|^2\leq -2\ln |x_{11}(t_1,t_2,k)|
\]
and thus
\[
\|I_1(k)\|_{L^2(\mathbb{R})}\to 0
\]
by (\ref{second2}). Therefore, $x_{1j}(t,k)$ is Cauchy in
$L^2(\mathbb{R})$ and must have a limit equal to the the weak-$\ast$
limit $x_{1j}(\infty,k)$.
\end{proof}

For fixed $f$, we have
$\pi_f=x_{11}(\infty,k)f_1+\ldots+x_{1N}(\infty,k)f_N$ and so

\begin{corollary}
If $q(t)=\|V(t)e_1\|\in L^2(\mathbb{R}^+)$, then
\[
\int\limits_\mathbb{R}|P_1X(t,k)f-\pi_f(k)|^2dk\to 0
\]
for any $f\in \mathbb{C}^N$.
\end{corollary}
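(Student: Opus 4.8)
The plan is to read this off the lemma immediately above together with the fact, recorded just before the statement, that $f\mapsto P_1X(t,k)f$ and $f\mapsto \pi_f(k)$ are both linear, with the \emph{same} coordinate functions in the limit. Concretely, $P_1X(t,k)f=\bigl(\sum_{j=1}^N x_{1j}(t,k)f_j\bigr)e_1$, while $\pi_f(k)=\sum_{j=1}^N x_{1j}(\infty,k)f_j$, so that
\[
P_1X(t,k)f-\pi_f(k)\,e_1=\Bigl(\sum_{j=1}^N f_j\bigl(x_{1j}(t,k)-x_{1j}(\infty,k)\bigr)\Bigr)e_1,
\]
and one interprets the quantity $|P_1X(t,k)f-\pi_f(k)|$ in the statement as the $\mathbb{C}^N$-norm of this vector, i.e. the modulus of the scalar in parentheses.

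From here the estimate is finite-dimensional bookkeeping: by the triangle inequality in $L^2(\mathbb{R})$ and Cauchy--Schwarz over the (finite) index set $j=1,\dots,N$,
\[
\Bigl(\int_{\mathbb{R}}\bigl|P_1X(t,k)f-\pi_f(k)\bigr|^2\,dk\Bigr)^{1/2}\le \|f\|\,\Bigl(\sum_{j=1}^N\bigl\|x_{1j}(t,\cdot)-x_{1j}(\infty,\cdot)\bigr\|_{L^2(\mathbb{R})}^2\Bigr)^{1/2}.
\]
Each of the $N$ summands on the right tends to $0$ as $t\to\infty$ by the preceding lemma (the $L^2(\mathbb{R})$ convergence $x_{1j}(t,k)\to x_{1j}(\infty,k)$, $j=1,\dots,N$), so the whole right-hand side tends to $0$; since the bound is uniform over the unit ball in $f$, this also gives the convergence uniformly on bounded sets of $f$.

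I do not expect a genuine obstacle here: all the analytic content — the semigroup decomposition $x_{1j}(t_2,k)=\sum_{m} x_{1m}(t_1,t_2,k)x_{mj}(t_1,k)$, the bound $\|x_{11}(t_1,t_2,\cdot)-1\|_{L^2(\mathbb{R})}\to 0$ coming from (\ref{e15}), and the control of the off-diagonal terms via $|x_{1m}(t_1,t_2,k)|^2\le -2\ln|x_{11}(t_1,t_2,k)|$ together with (\ref{second2}) — has already been carried out in the lemma just proved, and the corollary merely sums $N$ such statements against the fixed vector $f$. The only points that need a word of care are the (harmless) reading of $|P_1X(t,k)f-\pi_f(k)|$ indicated above and the observation, immediate from the displayed inequality, that the convergence is uniform in $f$ on bounded sets.
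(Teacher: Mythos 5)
Your proposal is correct and is precisely the argument the paper intends: the sentence immediately preceding the corollary ($\pi_f=x_{11}(\infty,k)f_1+\ldots+x_{1N}(\infty,k)f_N$ ``and so'') signals that the corollary is read off by linearity from the preceding lemma's $L^2(\mathbb{R})$ convergence of each $x_{1j}(t,\cdot)$, exactly as you do via the triangle inequality and Cauchy--Schwarz over the finite index set. The added remark on uniformity over bounded sets of $f$ is a harmless bonus not claimed in the paper.
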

This result is somewhat surprising since we do not assume anything
about $P_1^cVP_1^c$ except local integrability.\bigskip

Now, we are going to prove results on convergence of all elements of
the matrix $X$ and need to assume more on $V$. Let $\|V\|\in
L^2_{\rm loc}(\mathbb{R}^+)$ and $T$ is a fixed positive constant.
We start with the following simple observation. Fix $1< j< N$ and
consider vector $u(t)$ (it will be different for different $j$ but
we  suppress this dependence for shorthand) which solves
\begin{equation}
\frac{d}{dt} u(t)=i(k\Lambda+V)u, \quad 0<t<T \label{ev1}
\end{equation}
and satisfies the following boundary conditions. Let $a(t)$ denote
the vector containing the first $j-1$ components of $u$, $b(t)$ is
the $j$--th component of $u$, and $c(t)$ contains the $j+1,\ldots,
N$ components of $u$. Then, we require that $c(0)=0$, $b(0)=1$, and
$a(T)=0$. This solution does not have to exist, but for $\Im k$
large enough or small $V$  it does, it is unique, and it allows two
different representations. One of them is through $X$. Let
$X_j=P_{1\leq k\leq j}XP_{1\leq k\leq j}$, where $P_{1\leq k\leq j}$
is the projection onto the first $j$ coordinates. Then, assuming
that $u$ exists,
\begin{equation}
X_j(T)(a(0),1)^t=b(T)(0,\ldots,0,1)^t \label{ur}
\end{equation}
By the Laplace theorem for determinants, we have
\begin{equation}
\Delta_{j-1}b(T)=\Delta_{j} \label{vgw}
\end{equation}
where $\Delta_j=\det X_j$. Provided that $u_l$ exists for any
$l=1,\ldots, j$, iteration yields
\begin{equation}
\Delta_j(T,k)=b_1(T,k)\cdot\ldots\cdot b_j(T,k) \label{iter}
\end{equation}
The $b_1(t,k)$ can be identified with $x_{11}(t,k)$.

 The existence of $u(t,k)$ for large $\Im k$ and its analytical properties
follow from the standard asymptotical method for systems of ODEs
close to diagonal. We can write (\ref{ev1}) as
\[
\left\{\begin{array}{ccl}
a'&=&(ik\Lambda_a+Q_{11})a+ Q_{12}b+Q_{13}c\\
b'&=&Q_{21}a+ik\lambda_jb+Q_{23}c\\
c'&=&Q_{31}a+Q_{32}b+(ik\Lambda_c+Q_{33})c\\
\end{array}\right.
\]
where $Q_{nl}$ are the corresponding blocks of $iV$ and
\[
\Lambda_a=P_{1\leq n\leq  j-1}\Lambda P_{1\leq n\leq  j-1},\quad \Lambda_c=P_{j+1\leq n\leq N}\Lambda P_{j+1\leq n\leq N}
\]

Let $U_1$ and $U_2$ be solutions to the following Cauchy problems
\begin{eqnarray*}
\frac{d}{dt} U_1(\tau,t,k)=(ik\Lambda_a+Q_{11})U_1(\tau,t,k), \quad
U_1(\tau,\tau,k)=I
\\
\frac{d}{dt} U_2(\tau,t,k)=(ik\Lambda_c+Q_{33})U_2(\tau,t,k), \quad
U_2(\tau,\tau,k)=I
\end{eqnarray*}
Since $Q_{11}$ and $Q_{33}$ are antisymmetric, we have the following
obvious estimates
\begin{equation}
\|U_1(\tau,t,k)\|\leq \exp(\lambda_{j-1}\cdot \Im k\cdot (\tau-t)),
\quad t<\tau \label{est3}
\end{equation}
\begin{equation}
\|U_2(\tau,t,k)\|\leq \exp(-\lambda_{j+1}\cdot\Im k\cdot (t-\tau)),
\quad \tau<t \label{est4}
\end{equation}

The integral equations for the boundary conditions specified are
\begin{equation}
\left\{
\begin{array}{l}
\displaystyle a(t)=-\int_t^T U_1(\tau,t,k)\left[
Q_{12}(\tau)b(\tau)+Q_{13}(\tau)c(\tau)\right]d\tau
\\
\displaystyle b(t)=\exp(ik\lambda_jt)+\int_0^t
\exp(ik\lambda_j(t-\tau))\left[
Q_{21}(\tau)a(\tau)+Q_{23}(\tau)c(\tau)\right]d\tau
\\
\displaystyle c(t)=\int_0^t U_2(\tau,t,k)\left[
Q_{31}(\tau)a(\tau)+Q_{32}(\tau)b(\tau)\right]d\tau
\end{array}\right.\label{inteq}
\end{equation}
Consider $\tilde{u}=\exp(-ik\lambda_jt)u$. Then, for $\tilde{u}$, we
have the operator equation

\[
\tilde{u}(t)=f(t)+D\tilde{u}
\]
where $f(t)=e_j$ and $D$ is the corresponding integral operator. If
$y=\Im k>>1$,  then $D^2$ is contraction in the ball of radius $1$
in the space $\cal B$, where

\[
\cal B=\cal L\times\ldots\times \cal L\times L^\infty (0,T)\times
\cal L\times \ldots \times \cal L
\]
and $\cal L$ is the space with the norm
$\|\cdot\|_\infty+\|\cdot\|_2$. In fact, by (\ref{est3}),
(\ref{est4}), Cauchy-Schwarz and Young inequalities,
\[
\|(D)_{1(3)}\|_\infty\lesssim \|Q\|_2 y^{-1/2}, \|(D)_{1(3)}\|_2\lesssim \|Q\|_2 y^{-1}, \|(D)_2\|\lesssim \|Q\|_2
\]
\[
\|(D^2)_{1(3)}\|_\infty\lesssim \|Q\|_2^2 y^{-1/2},
\|(D^2)_{1(3)}\|_2\lesssim \|Q\|_2^2 y^{-1}, \|(D^2)_2\|\lesssim
\|Q\|_2^2y^{-1}
\]
Thus, there is a unique solution which belongs to $\cal B$. Assuming
$T=\infty$ and $\|V\|\in L^2(\mathbb{R}^+)$, we have
$u(t)=\exp(i\lambda_jkt)(b(\infty,k)e_j+\bar{o}(1))$. This is a
well-know result in the asymptotical theory of ODE but it is valid
for either large positive $\Im k$ or fixed $\Im k>0$ and small
$\|V\|_2$. It does not require any information on $Q_{11}$ and
$Q_{33}$. Notice that $b(T,k)=\exp(i\lambda_j kT)(1+O((\Im
k)^{-1}))$. Then, (\ref{iter}) yields invertibility of each $X_j$
for large $\Im k$. Also, since $\Delta_j$ is entire in $k$, the
formula
\[
b(T,k)=\Delta_j\Delta_{j-1}^{-1}
\]
allows to define $b$ for any $k$ as a meromorphic function.

For $k=iy,\quad y>>1$ we have the following asymptotical expansion

\[
\tilde{u}=f+Df+D^2f+\ldots
\]

\[
Df=\left[
\begin{array}{c}
\displaystyle -\int\limits_t^T U_1(\tau,t,iy)e^{-y\lambda_j(\tau-t)}Q_{12}(\tau) d\tau\\
0 \\
\displaystyle \int\limits_0^t
U_2(\tau,t,iy)e^{y\lambda_j(t-\tau)}Q_{32}(\tau)d\tau
\end{array}
\right]
\]
and
\[
\left(D^2f\right)_2(T)= \int\limits_0^T
\left[Q_{21}(\tau)(Df)_1(\tau)+Q_{23}(\tau)(Df)_3(\tau)\right]d\tau
\]
Denote $\Psi_2(\tau,t,y)=U_2(\tau,t,iy)e^{y\lambda_j(t-\tau)}$,
$\Psi_1(\tau,t,y)=U_1(\tau,t,iy)e^{y\lambda_j(t-\tau)}$.
Substituting the Duhamel expansions for $\Psi_{1(2)}$ into the
formula above, one gets
\begin{eqnarray*}
\left(D^2f\right)_2(T)=y^{-1}\left(\int\limits_0^T Q_{23}(t)
(\Lambda_c-\lambda_j)^{-1}Q_{32}(t)dt\right.
\\
\left. -\int\limits_0^T Q_{21}(t)
(\lambda_j-\Lambda_a)^{-1}Q_{12}(t)dt\right)+\bar{o}(y^{-1})
\end{eqnarray*}
Since $Q_{lj}=iV_{lj}$ and $|(D^3f)_2(T,iy)|=\bar{o}(y^{-1})$, we have
\[
b(T,iy)=\exp(-yT\lambda_j)\left(1+y^{-1}\left[\sum\limits_{k=1}^{j-1}
(\lambda_j-\lambda_k)^{-1}\int\limits_0^T|V_{kj}(\tau)|^2d\tau\right.\right.
\]
\[
\left.\left. -\sum\limits_{k=j+1}^{N}
(\lambda_k-\lambda_j)^{-1}\int\limits_0^T|V_{kj}(\tau)|^2d\tau
\right]+\bar{o}(y^{-1})\right)
\]
From (\ref{iter}), we have
\[
\ln \Delta_j(T,iy)=-(\lambda_1+\ldots+\lambda_j)yT
\]
\begin{equation}
-y^{-1}\sum\limits_{k=1}^{j}\sum\limits_{l=j+1}^N
|\lambda_l-\lambda_k|^{-1} \int\limits_0^T
|V_{kl}(\tau)|^2d\tau+\bar{o}(y^{-1})\label{magic}
\end{equation}
The similar calculation can be done in the general case when  $\Im
k\to+\infty$.

We are ready to prove the following
\begin{theorem}
If
\[
I(V)=\sum\limits_{k=1}^{j}\sum\limits_{l=j+1}^N
|\lambda_l-\lambda_k|^{-1} \int\limits_0^\infty
|V_{kl}(\tau)|^2d\tau<\infty
\]
then $g(t,k)=\Delta_j(t,k)\exp(-ikt(\lambda_1+\ldots+\lambda_j))$
converges in $\mathbb{C}^+$ to a function $g(\infty,k)$ which is in
the unit ball in $H^\infty(\mathbb{C}^+)$. Moreover,
\[
g(t,k)-g(\infty,k)\in H^2(\mathbb{C}^+), \quad \|g(t,k)-g(\infty,k)\|_2\to\infty
\]
and
\begin{equation}
0\geq \int\limits_{\mathbb{R}}\ln |g(\infty,k)|dk\geq -\pi I(V),
\quad \int\limits_{\mathbb{R}}|1-g(\infty,k)|^pdk\leq C(p)I(V),\quad
1<p<\infty \label{bns}
\end{equation}\label{main1}
\end{theorem}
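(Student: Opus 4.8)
The plan is to mimic the scalar argument of Lemma \ref{lemka}, now applied to the single scalar function $g(t,k)=\Delta_j(t,k)\exp(-ikt(\lambda_1+\ldots+\lambda_j))$, using the three structural facts already established above: (i) the factorization $\Delta_j(t,k)=b_1(t,k)\cdots b_j(t,k)$ from (\ref{iter}), (ii) the fact that $\Delta_j$ is the determinant of the compression $X_j=P_{1\le k\le j}XP_{1\le k\le j}$ of a matrix $X(t,k)$ which is a contraction in $\mathbb{C}^+$ (Lemma 3.1, i.e. (\ref{est-m1})), and (iii) the large-$\Im k$ asymptotic expansion (\ref{magic}). First I would record that $|\Delta_j(t,k)|\le 1$ for $k\in\mathbb{C}^+$: since $|X(t,k)|\le I$, the compression $X_j$ is also a contraction, and a determinant of a contraction on $\mathbb{C}^j$ has modulus at most $1$. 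Multiplying by the unimodular-on-$\mathbb{R}$ factor $\exp(-ikt(\lambda_1+\cdots+\lambda_j))$ (which decays in $\mathbb{C}^+$ since $\lambda_1+\cdots+\lambda_j\ge 0$, in fact $>0$ once $j\ge 2$), we get $|g(t,k)|\le 1$ on $\overline{\mathbb{C}^+}$, so $g(t,\cdot)$ lies in the unit ball of $H^\infty(\mathbb{C}^+)$ for each $t$; and $g$ is entire in $k$ because $\Delta_j$ is.

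Next I would establish convergence as $t\to\infty$, uniformly on compacts of $\mathbb{C}^+$. The cleanest route is through the ODE/integral-equation analysis already set up: under the hypothesis $I(V)<\infty$, in particular $\|V_{kl}\|_2<\infty$ for the relevant off-diagonal blocks, the Volterra/fixed-point argument for the auxiliary vector $u(t)$ — valid for fixed $\Im k>0$ once $\|V\|_2$ is small, and propagated to all of $\mathbb{C}^+$ by analyticity plus the semigroup composition $X(t_1,t_2,k)X(t_1,k)=X(t_2,k)$ — shows that each normalized factor $b_m(t,k)e^{-ik\lambda_m t}$ (for $1\le m\le j$) has a limit in $H^\infty(\mathbb{C}^+)$, uniformly on compacts. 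Taking the product over $m=1,\dots,j$ gives the limit $g(\infty,k)$, again in the unit ball of $H^\infty(\mathbb{C}^+)$. That $g(\infty,\cdot)\not\equiv 0$ and that $g(t,\cdot)-g(\infty,\cdot)\in H^2(\mathbb{C}^+)$ with $\|g(t,\cdot)-g(\infty,\cdot)\|_2\to 0$ follows from the semigroup identity exactly as in Lemma \ref{lem}: write $g(t_2,k)-g(t_1,k)$ in terms of $\Delta_j(t_1,t_2,k)-1$ and use that $\Delta_j(t_1,t_2,k)$ is a determinant of a contraction whose distance to $I$ is controlled — here one wants $\|1-\Delta_j(t_1,t_2,\cdot)\|_2\to0$, which will come from the estimate (\ref{e15}) applied to the $b_m$-factors (noting $1-\prod b_m = \sum$ of telescoping terms each of the form $(1-b_m)\cdot(\text{bounded})$). (The displayed $\|g(t,k)-g(\infty,k)\|_2\to\infty$ in the statement is evidently a typo for $\to 0$.)

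Finally, the quantitative estimates (\ref{bns}) come from the subharmonicity / trace-formula argument of Lemma \ref{lemka}(d),(e). Since $\ln|g(t,k)|$ is subharmonic and $\le 0$ on $\mathbb{C}^+$, the mean-value (Poisson) inequality along $k=iy$, combined with the asymptotic expansion: plugging $j$ into (\ref{iter}) and using (\ref{magic}) gives, after cancelling the $(\lambda_1+\cdots+\lambda_j)yT$ term by the chosen exponential normalization,
\[
\ln|g(t,iy)| = -y^{-1}\,I_t(V) + \overline{o}(y^{-1}),\qquad I_t(V)=\sum_{k=1}^{j}\sum_{l=j+1}^{N}|\lambda_l-\lambda_k|^{-1}\!\!\int_0^t|V_{kl}(\tau)|^2\,d\tau,
\]
so comparing the leading $y^{-1}$ coefficients of the two sides of the Poisson inequality yields $\int_{\mathbb{R}}\ln|g(t,k)|\,dk\ge -\pi I_t(V)$; letting $t\to\infty$ (Fatou, using $\ln|g(t,\cdot)|\to\ln|g(\infty,\cdot)|$ in measure and a uniform $L^1$ bound) gives the first inequality of (\ref{bns}). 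For the $L^p$ bound, write $g(\infty,k)=1+h(k)$; from $|g(\infty,k)|\le 1$ one gets $2\Re h\le -|h|^2$, the trace formula $\int_{\mathbb{R}}\Re h(t)\,dt=-\pi\lim_{y\to\infty} y(1-\Re g(\infty,iy))/1 = -\pi I(V)$ via the same asymptotic expansion, and $|h|\le 2$; hence $[h]_{L^{1,w}}\lesssim I(V)$ and then $\|h\|_{L^p}\le C(p)I(V)^{1/p}\cdot(\text{const})$ by interpolation — which gives $\int_{\mathbb{R}}|1-g(\infty,k)|^p\,dk\le C(p)I(V)$.

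The main obstacle I anticipate is not any single step but the bookkeeping needed to pass rigorously from the factor-wise statements about the $b_m(t,k)e^{-ik\lambda_m t}$ to the product $g$: one must check that the fixed-point construction of $u(t)$ (hence of $b_m$) indeed extends analytically from the large-$\Im k$ regime to all of $\mathbb{C}^+$, that each $b_m$ is bounded away from $0$ on compacts (so $\Delta_{j-1}^{-1}$ causes no spurious poles and $g(\infty,\cdot)\not\equiv 0$), and that the $L^2$-telescoping $1-\prod_{m\le j} b_m = \sum_{m\le j}\big(\prod_{m'<m}b_{m'}\big)(1-b_m)$ stays within the scope of (\ref{e15}) — this last point requires knowing each $1-b_m(\infty,\cdot)$ obeys the weak-$L^1$/$L^p$ bounds, which in turn needs the individual trace formula for each $m$, i.e. essentially running Lemma \ref{lemka} once per factor. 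Everything else is a direct transcription of the $2\times2$ and $N\times N$ lemmas already proved.
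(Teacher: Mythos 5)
Your endpoints match the paper (the trace formula via subharmonicity of $\ln|g|$ and the asymptotics at $i\infty$, then $g=1+h$, $2\Re h\le -|h|^2$, weak-$L^1$ plus interpolation; and yes, $\|g(t,\cdot)-g(\infty,\cdot)\|_2\to\infty$ is a typo for $\to 0$), but there are two genuine problems in the middle. First, your derivation of $|g(t,k)|\le 1$ on $\mathbb{C}^+$ is wrong as stated: $|\exp(-ikt(\lambda_1+\cdots+\lambda_j))|=\exp(t(\lambda_1+\cdots+\lambda_j)\Im k)$ \emph{grows} in the upper half-plane, so $|\Delta_j|\le 1$ there does not give $|g|\le 1$; what is needed is the much stronger decay $|\Delta_j(t,k)|\le e^{-t(\lambda_1+\cdots+\lambda_j)\Im k}$. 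The paper gets this by combining $|g(t,k)|\le 1$ on $\mathbb{R}$ (contractivity on the boundary) with the large-$\Im k$ expansion (\ref{magic}) showing $g(t,iy)\to 1$, and then a Phragm\'en--Lindel\"of argument for the entire function $g$.

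Second, and more seriously, your convergence argument through the factorization $\Delta_j=b_1\cdots b_j$ has a gap that cannot be repaired under the stated hypotheses. The factors $b_m$ are constructed by a fixed-point argument only for $\Im k$ large (or $\|V\|_2$ small) and are otherwise merely meromorphic ratios $\Delta_m\Delta_{m-1}^{-1}$; they are not individually contractions, and a trace formula for $b_m(\infty,\cdot)$ at level $m<j$ would involve $\sum_{k\le m}\sum_{l>m}|\lambda_l-\lambda_k|^{-1}\int|V_{kl}|^2$, which contains entries $V_{kl}$ with $k\le m<l\le j$ lying \emph{inside} the $j\times j$ block --- about which the hypothesis $I(V)<\infty$ says nothing (only local integrability is assumed there). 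So the telescoping $1-\prod b_m=\sum(\prod_{m'<m}b_{m'})(1-b_m)$ cannot be estimated. The paper avoids all of this: from the trace formula for $\Delta_j$ it deduces the singular-value inequality $\int_{\mathbb{R}}\mathrm{tr}\,(I-|X_j(t,k)|^2)\,dk\le 2\pi I(V)$, hence by unitarity $\int\mathrm{tr}\,|Y_j|^2\,dk,\ \int\mathrm{tr}\,|Z_j|^2\,dk\le 2\pi I(V)$ for the off-diagonal blocks; the block semigroup identity $X_j(t_2,k)=X_j(t_1,t_2,k)X_j(t_1,k)+Y_j(t_1,t_2,k)Z_j(t_1,k)$ then shows $\det X_j$ is multiplicative up to an error small in $L^2(dk)$, and applying the $p=2$ bound to the evolution on $[t_1,t_2]$ (whose $I(V)$ tends to $0$) makes $1-g(t,\cdot)$ Cauchy in $H^2(\mathbb{C}^+)$. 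That singular-value/block-semigroup mechanism is the missing idea; the factorization (\ref{iter}) is used in the paper only to compute the large-$\Im k$ asymptotics, not for the convergence.
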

\begin{proof}
Consider $g(t,k)$ for any $t>0$. It is entire in $k$ and
$|g(t,k)|\leq 1$ for real $k$ since $X_j$ is a contraction for $\Im
k\geq 0$. Moreover, we know its asymptotics for large $\Im k$ which
implies that $g(t,k)$ is in the unit ball in
$H^\infty(\mathbb{C}^+)$ and
\[
0\geq \int\limits_{\mathbb{R}}\ln |g(t,k)|dk\geq -\pi I(V)
\]
Arguing like in the proof of lemma \ref{lemka}, we write
$g(t,k)=1+h(t,k)$. Then $\Re h(t,k)\leq 0$ and

\[
\int\limits_{\mathbb{R}} \Re h(t,k)dk =-\pi I(V)
\]
which implies
\[
\|1-g(t,k)\|_{L^{1,w}(\mathbb{R})}\lesssim I(V), \quad
\|1-g(t,k)\|_{L^{p}(\mathbb{R})}\leq C(p)\left[I(V)\right]^{1/p},
\quad 1<p<\infty
\]\bigskip

Let $0\leq s_1(k,t)\leq s_2(k,t)\leq \ldots \leq s_j(k,t)\leq 1$ be
singular numbers of $X_j(t,k)$. Then,
\[
0\geq \int\limits_{\mathbb{R}} \Bigl(\ln s_1^2(k,t)+\ldots +\ln
s_j^2(k,t)\Bigr)dk\geq -2\pi I(V)
\]
and therefore
\begin{equation}
\int\limits_\mathbb{R} {\rm tr} (I_{j\times j}-|X_j(t,k)|^2)dk\leq
2\pi I(V) \label{h2n}
\end{equation}
Write $X(t,k)$ in the block form
\[
X(t,k)= \left[
\begin{array}{cc}
X_j & Y_j\\
Z_j & W_j
\end{array}
\right]
\]
Since $X$ is unitary, (\ref{h2n}) can be rewritten
\begin{equation}
\int\limits_{\mathbb{R}} {\rm tr} |Y_j(t,k)|^2dk\leq 2\pi I(V), \,
\int\limits_{\mathbb{R}} {\rm tr} |Z_j(t,k)|^2dk\leq 2\pi I(V)
\label{h2}
\end{equation}\bigskip

Now, that all necessary uniform bounds are obtained, we can prove
the convergence result. For any $t_1<t_2$, the semigroup property in
the block form yields the identity
\[
X_j(t_2,k)=X_j(t_1,t_2,k)X_j(t_1,k)+Y_j(t_1,t_2,k)Z_j(t_1,k)
\]
As $t_{1(2)}\to\infty$, we have
\[
\int_\mathbb{R} {\rm tr}|Y_j(t_1,t_2,k)|^2 dk\to
0
\]
 So,
\[
\int\limits_\mathbb{R}\left|\det X_j(t_2,k)-\det X_j(t_1,t_2,k)
\cdot \det X_j(t_1,k)\right|^2dk\to 0, \quad t_{1(2)}\to\infty
\]
On the other hand,
\[
\int\limits_{\mathbb{R}} \left|\det
X_j(t_1,t_2,k)-\exp(ik(t_2-t_1)(\lambda_1+\ldots+\lambda_j))\right|^2dk\to
0
\]
as $t_{1(2)}\to \infty$. So, $1-g(t,k)$ is Cauchy in
$H^2(\mathbb{C}^+)$ and we have convergence $g(t,k)\to g(\infty,k)$
uniformly over the compacts in $\mathbb{C}^+$. Since each $g(t,k)$
is analytic contraction, the limit $g(\infty,k)$ as an analytic
contraction as well. The bounds (\ref{bns}) can be obtained through
the argument identical to the one used to handle $g(t,k)$.
\end{proof}

{\bf Remark 6.} Notice that the theorem was proved under the
assumption that all eigenvalues of $\Lambda$ are non-degenerate.
That was used in the proof of the asymptotics for $b$. In the
meantime, due to cancelation in (\ref{magic}), the statement of
theorem \ref{main1} as well as (\ref{h2}) holds under the assumption
that $\lambda_j<\lambda_{j+1}$ and the other eigenvalues can
degenerate.\bigskip

The estimates for determinants and (\ref{h2}) can be obtained for
$W_{j-1}$ as well and that yields the following important result.

\begin{theorem}
If \textit{\[ I'(V)=\sum\limits_{k=1}^{j}\sum\limits_{l=j}^N
|\lambda_l-\lambda_k|^{-1} \int\limits_0^\infty
|V_{kl}(\tau)|^2d\tau<\infty, \quad ({\it remember\,\, that\,\,
}V_{jj}(t)=0 )
\]}
then
\[
\int\limits_\mathbb{R} |x_{jj}(t,k)-\exp(i\lambda_jtk)|^2dk\lesssim
I'(V)
\]
Moreover, there is $\tilde{x}_{jj}(\infty,k)$ such that
\[
\int_\mathbb{R} \left|x_{jj}(t,k)\exp(-i\lambda_jtk)
-\tilde{x}_{jj}(\infty,k)\right|^2dk\to 0
\]
and
\[
\int\limits_\mathbb{R} |\tilde{x}_{jj}(\infty,k)-1|^2dk\lesssim I'(V)
\]\label{main2}
\end{theorem}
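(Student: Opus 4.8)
The plan is to run the determinant machinery of Theorem~\ref{main1} at the two cuts $j-1$ and $j$ simultaneously, and then to pull $x_{jj}$ out of $\det X_j$ by a single cofactor expansion. First note that $I'(V)<\infty$ forces both the ``cut at $j$'' quantity $\sum_{k=1}^{j}\sum_{l=j+1}^{N}|\lambda_l-\lambda_k|^{-1}\int_0^\infty|V_{kl}|^2$ and the ``cut at $j-1$'' quantity $\sum_{k=1}^{j-1}\sum_{l=j}^{N}|\lambda_l-\lambda_k|^{-1}\int_0^\infty|V_{kl}|^2$ to be finite, since each is a subsum of $I'(V)$ (the $l\geq j+1$ part and the $k\leq j-1$ part, respectively). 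Hence Theorem~\ref{main1} applies to both cuts: writing $\sigma_m=\lambda_1+\cdots+\lambda_m$ and $g_m(t,k)=\Delta_m(t,k)e^{-ikt\sigma_m}$ for $m=j-1,j$, each $g_m(t,k)$ is an analytic contraction on $\mathbb{C}^+$, converges in $L^2(\mathbb{R})$ as $t\to\infty$, satisfies $\|1-g_m(t,\cdot)\|_{L^2(\mathbb{R})}^2\lesssim I'(V)$ uniformly in $t$ by (\ref{bns}) and its proof, and the block estimate (\ref{h2}) gives $\int_{\mathbb{R}}{\rm tr}\,\bigl(|Y_m(t,k)|^2+|Z_m(t,k)|^2\bigr)\,dk\lesssim I'(V)$, where $Y_m,Z_m$ are the off-diagonal blocks of $X$ at the cut $m$ (same block convention as in the proof of Theorem~\ref{main1}). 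All these bounds also hold on any interval $[t_1,t_2]$ in place of $[0,t]$, with $I'(V)$ replaced by the corresponding tail integral, which $\to 0$ as $t_{1(2)}\to\infty$.

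For the first estimate, expand $\Delta_j=\det X_j$ along its $j$th row and $j$th column: $\Delta_j=x_{jj}\Delta_{j-1}+E$, where $E$ collects the permutation terms with $\sigma(j)\neq j$. Each such term carries a factor $x_{js}$ with $s<j$ (an entry of $Z_{j-1}$) and a factor $x_{mj}$ with $m<j$ (an entry of $Y_{j-1}$), while the remaining factors are entries of the unitary $X$ and hence bounded by $1$; therefore $|E|\lesssim\bigl(\sum_{m<j}|x_{mj}(t,k)|^2\bigr)^{1/2}\leq\bigl({\rm tr}\,|Y_{j-1}(t,k)|^2\bigr)^{1/2}$ on $\mathbb{R}$, so $\|E\|_{L^2(\mathbb{R})}^2\lesssim I'(V)$. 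Multiplying the cofactor identity by $e^{-ikt\sigma_j}$ and setting $\xi(t,k)=x_{jj}(t,k)e^{-i\lambda_j tk}$, so that $x_{jj}\Delta_{j-1}e^{-ikt\sigma_j}=\xi\,g_{j-1}$ (using $\sigma_j=\sigma_{j-1}+\lambda_j$), yields the \emph{additive} identity $\xi\,g_{j-1}=g_j-\widetilde E$, with $\widetilde E=E\,e^{-ikt\sigma_j}$ and $|\widetilde E|=|E|$ on $\mathbb{R}$. Since $|\xi|\leq 1$ on $\mathbb{R}$, rewriting $1-\xi=(1-g_j)+\widetilde E+\xi(g_{j-1}-1)$ gives $\|1-\xi(t,\cdot)\|_{L^2(\mathbb{R})}\leq\|1-g_j\|_{L^2}+\|\widetilde E\|_{L^2}+\|1-g_{j-1}\|_{L^2}\lesssim I'(V)^{1/2}$, which is the claimed bound for $x_{jj}(t,k)-e^{i\lambda_j tk}$ since these two quantities have the same modulus for real $k$.

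Convergence then follows from the semigroup property just as in the proof of Theorem~\ref{main1}. For $t_1<t_2$, $x_{jj}(t_2,k)=\sum_m x_{jm}(t_1,t_2,k)x_{mj}(t_1,k)$ gives $\xi(t_2,k)=\xi(t_1,t_2,k)\,\xi(t_1,k)+F$, where $\xi(t_1,t_2,k)=x_{jj}(t_1,t_2,k)e^{-i\lambda_j(t_2-t_1)k}$ and $F$ collects the $m\neq j$ terms. For $m<j$ the factor $x_{jm}(t_1,t_2,k)$ is an entry of $Z_{j-1}(t_1,t_2,k)$ and for $m>j$ it is an entry of $Y_j(t_1,t_2,k)$, so by Cauchy--Schwarz and unitarity of $X(t_1,k)$ one gets $\int_{\mathbb{R}}|F|^2\,dk\lesssim\int_{\mathbb{R}}{\rm tr}\,\bigl(|Z_{j-1}(t_1,t_2,k)|^2+|Y_j(t_1,t_2,k)|^2\bigr)\,dk\to 0$. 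Also $\|1-\xi(t_1,t_2,\cdot)\|_{L^2(\mathbb{R})}\to 0$ by the first estimate applied on $[t_1,t_2]$. Hence $\xi(t_2,\cdot)-\xi(t_1,\cdot)=\xi(t_1,\cdot)\bigl(\xi(t_1,t_2,\cdot)-1\bigr)+F\to 0$ in $L^2(\mathbb{R})$, so $\xi(t,\cdot)$ is Cauchy in $L^2(\mathbb{R})$; its limit is the desired $\widetilde x_{jj}(\infty,k)$, and letting $t\to\infty$ in the first estimate gives $\int_{\mathbb{R}}|\widetilde x_{jj}(\infty,k)-1|^2\,dk\lesssim I'(V)$.

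The only real obstacle is to keep every step additive: the boundary values $g_{j-1},g_j$ and the remainder $\widetilde E$ may vanish on subsets of $\mathbb{R}$, so one cannot pass to $x_{jj}=\Delta_j/\Delta_{j-1}$ pointwise, and one must check that the index range defining $I'(V)$ (with $V_{jj}=0$ and the summation $k\leq j\leq l$) simultaneously dominates both the cut-$j$ and cut-$(j-1)$ quantities of Theorem~\ref{main1} and the block traces ${\rm tr}\,|Y_{j-1}|^2$, ${\rm tr}\,|Z_{j-1}|^2$, ${\rm tr}\,|Y_j|^2$ entering the error terms. This is exactly the role played by that particular definition; everything else is the determinant-and-semigroup bookkeeping already carried out for Theorem~\ref{main1}.
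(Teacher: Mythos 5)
Your proposal is correct and follows essentially the same route as the paper: the expansion $\Delta_j=x_{jj}\Delta_{j-1}+E$, the determinant asymptotics of Theorem \ref{main1} applied at the cuts $j-1$ and $j$ (both dominated by $I'(V)$), the block estimate (\ref{h2}) for the off-diagonal entries, and the semigroup property for the Cauchy-in-$L^2(\mathbb{R})$ argument. The one step you gloss over --- passing from ``the remaining factors are entries of a unitary matrix, hence bounded by $1$'' to $|E|\lesssim\bigl(\sum_{m<j}|x_{mj}|^2\bigr)^{1/2}$ --- needs the permutation terms to be grouped into cofactors and then paired against the $x_{mj}$ by Cauchy--Schwarz, using that the cofactor vector of a contraction has norm at most $1$; this is exactly the content of the paper's Lemma \ref{contr}, without which the naive term-by-term bound produces a factorial constant.
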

\begin{proof}
The estimate (\ref{h2}), applied to $X_j$ and $W_{j-1}$, yields
\[
\int\limits_\mathbb{R} \sum\limits_{l\neq j}
\left(|x_{lj}(t,k)|^2+|x_{jl}(t,k)|^2\right)dk\lesssim I'(V)
\]
Expanding in the last raw, we have
\begin{equation}
\Delta_j=x_{jj}\Delta_{j-1}+r,\quad
r=x_{j1}A_{j1}+\ldots+x_{j,j-1}A_{j,j-1} \label{repr}
\end{equation}
where $\{A_{lm}\}$ are cofactors of $X_j$.
\begin{lemma}
If $Z$ is $j\times j$ contraction then the adjoint $C={\it adj} Z$
is contraction as well. In particular,
\begin{equation}
|A_{11}|^2+\ldots+|A_{1j}|^2\leq 1 \label{str}
\end{equation}
where $A_{ik}$ is $(i,k)$--cofactor of $Z$.\label{contr}
\end{lemma}
\begin{proof}
Take any $\alpha=(\alpha_1,\ldots \alpha_j)$ with $\|\alpha\|_2=1$.
Replace the first raw of $Z$ by $\alpha$ and denote the resulting
matrix by $Z_\alpha$. By Laplace theorem,
\[
\det Z_\alpha=\alpha_1A_{11}+\ldots +\alpha_jA_{1j}
\]
On the other hand,  Hadamard's estimate gives
\[
|\det Z_\alpha|\leq h_1\ldots h_j\leq 1
\]
where $h_l$ is the $\ell^2$--length of the $l$--th raw of
$Z_\alpha$. Since $\alpha$ is arbitrary, we get (\ref{str}). This
implies $\|C e_1\|\leq 1$. Take any unitary $U$. We have
$UCU^{-1}={\it adj}(UZU^{-1})$. Therefore,
\[
\|CU^{-1}e_1\|\leq 1
\]
Since $U$ is arbitrary, $\|Cx\|\leq \|x\|$ for any $x$.
\end{proof}
By lemma, we have
\[
\int\limits_\mathbb{R} |r(k)|^2dk\lesssim I'(V)
\]
Since
\[
\int
\left|\Delta_j-\exp(ikt(\lambda_1+\ldots+\lambda_j))\right|^2dk\lesssim
I'(V)
\]
and
\[
\int
\left|\Delta_{j-1}-\exp(ikt(\lambda_1+\ldots+\lambda_{j-1}))\right|^2dk\lesssim
I'(V)
\]
the formula (\ref{repr}) yields
\[
\int\limits_\mathbb{R}
\left|x_{jj}(t,k)-\exp(ikt\lambda_j)\right|^2dk\lesssim I'(V)
\]
Using the semigroup property one can show that
$x_{jj}(t,k)\exp(-ikt\lambda_j)-1$ is Cauchy in $L^2(\mathbb{R})$
which implies existence of the limit.
\end{proof}

\begin{corollary} Let
\[
\sum\limits_{k\neq l} \int\limits_0^\infty
|\lambda_k-\lambda_l|^{-1}|V_{kl}(t)|^2dt<\infty
\]
Consider $\tilde{X}(t,k)=\exp(-ikt\Lambda)X(t,k)$. Then
$\tilde{X}(t,k)\to \tilde{X}(\infty,k)$  in the strong sense, i.e.,
\[
\int\limits_\mathbb{R}
\|\tilde{X}(t,k)f-\tilde{X}(\infty,k)f\|^2dk\to 0
\]
for any $f$.
\end{corollary}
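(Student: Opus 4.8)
The plan is to deduce the corollary from Theorems \ref{main1} and \ref{main2} by running the argument for every index $j$ simultaneously. First I would observe that the hypothesis $\sum_{k\neq l}\int_0^\infty|\lambda_k-\lambda_l|^{-1}|V_{kl}(t)|^2dt<\infty$ is exactly the condition that $I'(V)<\infty$ for \emph{each} choice of the splitting index $j=1,\dots,N$ (and also $I(V)<\infty$ for each $j$), so Theorem \ref{main2} applies to each diagonal entry $x_{jj}$ and Theorem \ref{main1} applies to each principal minor $\Delta_j$. From Theorem \ref{main2} we already get, for every $j$, that $e^{-ik\lambda_j t}x_{jj}(t,k)\to\tilde x_{jj}(\infty,k)$ in $L^2(\mathbb{R})$, together with the quantitative bound $\int_\mathbb{R}\sum_{l\neq j}(|x_{lj}(t,k)|^2+|x_{jl}(t,k)|^2)\,dk\lesssim I'(V)$, which is the crucial uniform-in-$t$ control on the off-diagonal entries.

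Next I would reduce the strong convergence statement to convergence of all matrix entries of $\tilde X(t,k)=e^{-ikt\Lambda}X(t,k)$ in $L^2(\mathbb{R})$. Indeed $\|\tilde X(t,k)f-\tilde X(\infty,k)f\|^2\leq \|f\|^2\sum_{m,n}|\tilde x_{mn}(t,k)-\tilde x_{mn}(\infty,k)|^2$ pointwise in $k$, so it suffices to prove $\int_\mathbb{R}|\tilde x_{mn}(t,k)-\tilde x_{mn}(\infty,k)|^2dk\to 0$ for every pair $(m,n)$, where $\tilde x_{mn}=e^{-ik\lambda_m t}x_{mn}$. For the diagonal entries $m=n$ this is Theorem \ref{main2}. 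For the off-diagonal entries, the bound above shows $\int_\mathbb{R}|\tilde x_{mn}(t,k)|^2dk=\int_\mathbb{R}|x_{mn}(t,k)|^2dk\lesssim I'(V)$, and the natural guess is that the limit is $\tilde x_{mn}(\infty,k)\equiv 0$; I would verify this by showing the off-diagonal part actually tends to $0$ in $L^2(\mathbb{R})$, not merely stays bounded.

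The mechanism for the off-diagonal decay, and the main technical point, is again the semigroup identity together with the vanishing of $Y_j,Z_j$-type blocks in $L^2$. Fix $m\neq n$; choosing the splitting index appropriately (say $j=\max(m,n)$ or $j=m$), the entry $x_{mn}$ sits in an off-diagonal block $Y_j$ or $Z_j$ of the block decomposition $X=\begin{bmatrix}X_j & Y_j\\ Z_j & W_j\end{bmatrix}$, and \eqref{h2} from the proof of Theorem \ref{main1} gives $\int_\mathbb{R}\mathrm{tr}\,|Y_j(t_1,t_2,k)|^2dk\to 0$ and $\int_\mathbb{R}\mathrm{tr}\,|Z_j(t_1,t_2,k)|^2dk\to 0$ as $t_1,t_2\to\infty$ (these are $H^2$-small increments because $I(V)$ for the propagator $X(t_1,t_2,k)$ over the window $[t_1,t_2]$ tends to $0$). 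Plugging the block semigroup identity $X(t_2)=X(t_1,t_2)X(t_1)$ into the $(m,n)$ slot and using the uniform bound $\|X(t_1,k)\|\leq 1$ on $\mathbb{C}^+$ (Lemma 3.1, hence also on $\mathbb{R}$ by unitarity) together with the already-established $L^2$-convergence of the diagonal blocks, one gets that $x_{mn}(t,k)$ is Cauchy in $L^2(\mathbb{R})$ with limit $0$.

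Finally I would assemble the pieces: combining the diagonal convergence from Theorem \ref{main2} with the off-diagonal convergence to $0$, every entry $\tilde x_{mn}(t,k)$ converges in $L^2(\mathbb{R})$, with limit $\tilde X(\infty,k)$ whose entries are $\tilde x_{jj}(\infty,k)$ on the diagonal and $0$ off it; then the pointwise-in-$k$ domination $\|\tilde X(t,k)f-\tilde X(\infty,k)f\|^2\leq\|f\|^2\sum_{m,n}|\tilde x_{mn}(t,k)-\tilde x_{mn}(\infty,k)|^2$ and dominated convergence (the right side being bounded by an integrable function uniformly in $t$, via the $I(V)$, $I'(V)$ bounds) finish the proof. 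The step I expect to be the main obstacle is the bookkeeping that makes the off-diagonal Cauchy-in-$L^2$ argument work uniformly over all $(m,n)$ at once: one must pick, for each entry, a splitting index $j$ for which both \eqref{h2} and the diagonal-block convergence are available, and check that the cross terms $Y_j(t_1,t_2)Z_j(t_1)$ and $(X_j(t_1,t_2)-\text{phase})X_j(t_1)$ really are $L^2(\mathbb{R})$-small rather than merely bounded — this is where the cancellation noted in Remark 6 and the short-range condition enter.
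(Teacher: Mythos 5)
Your overall route --- reduce to entrywise $L^2(\mathbb{R})$ convergence of $\tilde x_{mn}(t,k)=e^{-ik\lambda_m t}x_{mn}(t,k)$, get the diagonal entries from Theorem \ref{main2}, and control the off-diagonal entries via the semigroup identity together with the bounds of type (\ref{h2}) applied to the increment propagator $X(t_1,t_2,k)$ --- is precisely the ``standard application of the semigroup property and the previous results'' that the paper intends, and most of your bookkeeping (finitely many entries, Hilbert--Schmidt domination of $\|\tilde X f-\tilde X(\infty)f\|$) is fine. But one claim is genuinely false and would derail the verification you propose: the off-diagonal entries of $\tilde X(\infty,k)$ are \emph{not} zero in general, and $x_{mn}(t,k)$ without the phase correction is not Cauchy in $L^2(\mathbb{R})$. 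Take $N=2$, $0=\lambda_1<\lambda_2$, and a Hermitian $V$ supported in $t\in[0,1]$ with nonvanishing off-diagonal entry. For $t>1$ one has $X(t,k)=e^{ik(t-1)\Lambda}X(1,k)$, hence $\tilde x_{12}(t,k)=x_{12}(1,k)$ for all $t>1$, which is not identically zero (at $k=0$ the propagator is $\exp\bigl(i\int_0^1V\,d\tau\bigr)$, and one uses continuity in $k$); similarly $x_{21}(t,k)=e^{ik\lambda_2(t-1)}x_{21}(1,k)$ oscillates and is not $L^2$-Cauchy. So ``the off-diagonal part tends to $0$ in $L^2$'' cannot be verified.

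The repair is short and stays within your framework. In the semigroup expansion
\[
x_{mn}(t_2,k)=\sum_{p} x_{mp}(t_1,t_2,k)\,x_{pn}(t_1,k),
\]
keep the $p=m$ term separately. The sum over $p\neq m$ is $L^2(\mathbb{R})$-small: by Cauchy--Schwarz and unitarity of $X(t_1,k)$ its square is at most $\sum_{p\neq m}|x_{mp}(t_1,t_2,k)|^2$, whose $k$-integral is controlled by the quantity $I'$ computed from $\int_{t_1}^{t_2}|V_{kl}|^2\,dt$ and hence tends to $0$ as $t_1\to\infty$. For the $p=m$ term, Theorem \ref{main2} applied on the window $[t_1,t_2]$ gives $x_{mm}(t_1,t_2,k)=e^{ik\lambda_m(t_2-t_1)}+O_{L^2}(1)$ with the $O_{L^2}$ term vanishing as $t_1\to\infty$, while $|x_{mn}(t_1,k)|\leq 1$. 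Multiplying the whole identity by $e^{-ik\lambda_m t_2}$, the main term reproduces $\tilde x_{mn}(t_1,k)$ exactly, so $\tilde x_{mn}(t,k)$ is Cauchy in $L^2(\mathbb{R})$ --- with a limit that is in general nonzero. Since the corollary only asserts the existence of $\tilde X(\infty,k)$, this is all that is needed; no identification of the limit, and no dominated convergence, is required once each of the finitely many entries converges in $L^2(\mathbb{R})$.
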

\begin{proof}
The proof is a standard application of the semigroup property and
the previous results.
\end{proof}

We are going to consider now a somewhat special case when the
frequencies degenerate in different ways. The first situation is a
model for Schr\"odinger evolution on $1$--d torus.

Assume that $\lambda_{j-1}<\lambda_j=\lambda_{j+1}<\lambda_{j+2}$
for some $j: 1<j<N$. We will try to understand how the
$P_{\{j,j+1\}}X(t,k)P_{\{j,j+1\}}$ part of $X(t,k)$ behaves for large $t$.
Consider the following evolutions
\[
\Psi'(\tau,t,k)=i\left(k\lambda_jI_{2\times 2}+\left[
\begin{array}{cc}
0 & V_{j,j+1}(t)\\
V_{j+1,j}(t) & 0
\end{array} \right]\right)\Psi(\tau,t,k),\quad
\Psi(\tau,\tau,k)=I_{2\times 2}
\]
\[
W'(\tau,t)=i\left[
\begin{array}{cc}
0 & V_{j,j+1}(t)\\
V_{j+1,j}(t) & 0
\end{array} \right]W(\tau,t),\quad
W(\tau,\tau)=I_{2\times 2}
\]
Obviously, $W$ is $k$--independent and is unitary since
$V_{j,j+1}=\bar{V}_{j+1,j}$. For $\Psi$, we have
$\Psi=\exp(ik\lambda_jt)W$.

Notice that for real or purely imaginary $V_{j,j+1}$ the matrix that
diagonalizes the perturbation is $t$--independent and we can assume
that $V_{j,j+1}(t)=0$ without loss of generality. We will consider
the general case.
 The proof of the following result repeats the argument
given above with minor changes which we will explain.

\begin{theorem}
Assume that $\lambda_{j-1}<\lambda_j=\lambda_{j+1}<\lambda_{j+2}$
and
\[
I''(V)=\sum\limits_{k=1}^{j-1}\sum\limits_{l=j}^{j+1}
|\lambda_l-\lambda_k|^{-1} \int\limits_0^\infty
|V_{kl}(\tau)|^2d\tau+\sum\limits_{k=1}^{j+1}\sum\limits_{l=j+2}^{N}
|\lambda_l-\lambda_k|^{-1} \int\limits_0^\infty
|V_{kl}(\tau)|^2d\tau<\infty,
\]
Consider $Y(t,k)=P_{\{j,j+1\}}X(t,k)P_{\{j,j+1\}}$. Then,
\begin{equation}
\int\limits_\mathbb{R} \|Y(t,k)-\Psi(0,t,k)\|^2dk\lesssim I''(V)
\label{est5}
\end{equation}
Moreover, $\|\Psi^{-1}(0,t,k)Y(t,k)- \tilde{Y}(\infty,k)\|\to 0$ in
$L^2(\mathbb{R})$ and
\[
\int\limits_\mathbb{R} \|\tilde{Y}(\infty,k)-I\|^2dk\lesssim I''(V)
\]\label{main3}
\end{theorem}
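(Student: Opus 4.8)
The plan is to gauge away the degenerate block and reduce everything to one scalar observable, after which the argument repeats those for Theorems~\ref{main1} and~\ref{main2} with the determinant replaced by the trace of the $2\times 2$ block. Let $V_0(t)$ agree with $V(t)$ in the entries $(j,j+1)$, $(j+1,j)$ and vanish elsewhere, and let $U_0(t,k)$ solve $\partial_tU_0=i(k\Lambda+V_0)U_0$, $U_0(0,k)=I$. Since $k\Lambda+V_0$ is block diagonal, so is $U_0$, with the $2\times2$ block $\Psi(0,t,k)$ on positions $\{j,j+1\}$ and scalars $e^{ik\lambda_mt}$ elsewhere. Put $R(t,k)=U_0(t,k)^{-1}X(t,k)$: it is entire in $k$, unitary for real $k$, solves $\partial_tR=i\widehat V R$ with $\widehat V=U_0^{-1}(V-V_0)U_0$, and $P_{\{j,j+1\}}\widehat VP_{\{j,j+1\}}=0$ while every cross entry of $\widehat V$ oscillates with a nonzero frequency $\lambda_q-\lambda_p$. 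Block--diagonality of $U_0$ gives $Y(t,k)=\Psi(0,t,k)R_Y(t,k)$, $R_Y:=P_{\{j,j+1\}}RP_{\{j,j+1\}}$, and since $\Psi(0,t,k)$ is unitary on $\mathbb R$, (\ref{est5}) and the convergence statement are equivalent to the corresponding statements for $R_Y(t,k)-I_2$. Applying Theorem~\ref{main1}, in the form of Remark~6, to the cuts at $j-1$ and at $j+1$ (legitimate since $\lambda_{j-1}<\lambda_j$, $\lambda_{j+1}<\lambda_{j+2}$, and the relevant $I(V)$'s are $\le I''(V)$) and using (\ref{h2}) with unitarity of $X$ one records the leakage bound $\int_{\mathbb R}{\rm tr}(I_2-R_Y^*R_Y)\,dk=\int_{\mathbb R}\|P^c_{\{j,j+1\}}X(t,k)P_{\{j,j+1\}}\|_{HS}^2\,dk\lesssim I''(V)$.

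The scalar observable is $\psi(t,k):=\tfrac12{\rm tr}\,R_Y(t,k)=\tfrac12e^{-ik\lambda_jt}\,{\rm tr}\big(W(0,t)^*Y(t,k)\big)$; since $R_Y$ is a contraction on $\mathbb R$, $|\psi|\le1$ there. The asymptotic ODE analysis of Section~3 applies to $R$ essentially verbatim, the scalar unknown $b$ of (\ref{inteq}) being replaced by the $2\times2$ block whose free evolution is the non--decaying $\Psi$: the integral equations for the $a,b,c$ components, the propagator estimates (\ref{est3})--(\ref{est4}) with the gaps $\lambda_j-\lambda_{j-1}$, $\lambda_{j+2}-\lambda_j$, and the contraction in $\mathcal B$ use only the cross--block entries of $\widehat V$, whose $L^2$ mass is exactly what $I''(V)$ controls (this is where finiteness of $I''$, not of $\|V\|_2$, is used). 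Hence $e^{-ik\lambda_jt}Y(t,k)$, and so $R_Y(t,k)$ and $\psi(t,k)$, is bounded on $\mathbb C^+$, and by the maximum principle $\psi$ lies in the unit ball of $H^\infty(\mathbb C^+)$. The Duhamel expansion that produces (\ref{magic}) — now with no divergent term, since the $(j,j+1)$ coupling has been absorbed into $\Psi$, and with all gauge factors cancelling under the trace — gives, as $\Im k\to+\infty$,
\[
\psi(t,iy)=1-\frac{\mathcal I(V)_t}{2y}+\overline{o}(y^{-1}),\qquad
\mathcal I(V)_t=\sum_{k\le j-1}\sum_{l\in\{j,j+1\}}\frac{\int_0^t|V_{kl}|^2}{\lambda_l-\lambda_k}+\sum_{k\in\{j,j+1\}}\sum_{l\ge j+2}\frac{\int_0^t|V_{kl}|^2}{\lambda_l-\lambda_k},
\]
with $\mathcal I(V)_t$ increasing to $\mathcal I(V)_\infty\le I''(V)$.

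Write $\psi=1+h(t,k)$. From $|1+h|\le1$ on $\mathbb C^+$ we get $\Re h\le0$ there, and the bounded harmonic function $\Re h$ has $\Re h(t,iy)=-\mathcal I(V)_t/(2y)+\overline{o}(y^{-1})$; comparing with its Poisson representation exactly as in Lemma~\ref{lemka}(e) yields the trace formula $\int_{\mathbb R}\Re h(t,k)\,dk\ge-\tfrac\pi2\mathcal I(V)_t$. On $\mathbb R$, using that $R_Y$ is a contraction,
\[
\|R_Y(t,k)-I_2\|_{HS}^2={\rm tr}(R_Y^*R_Y)-2\Re{\rm tr}\,R_Y+2=-{\rm tr}\big(I_2-R_Y^*R_Y\big)-4\Re h(t,k)\le-4\Re h(t,k),
\]
so $\int_{\mathbb R}\|R_Y(t,k)-I_2\|^2dk\le-4\int_{\mathbb R}\Re h(t,k)\,dk\le2\pi\mathcal I(V)_t\le2\pi I''(V)$, which is (\ref{est5}). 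For convergence use the semigroup property: with $R(t_1,t_2,k)$ the analogous evolution on $[t_1,t_2]$, $R_Y(t_2,k)=R_Y(t_1,t_2,k)R_Y(t_1,k)+P_{\{j,j+1\}}R(t_1,t_2,k)P^c_{\{j,j+1\}}\cdot P^c_{\{j,j+1\}}R(t_1,k)P_{\{j,j+1\}}$, and applying the above to $[t_1,t_2]$ makes both $\int_{\mathbb R}\|R_Y(t_1,t_2,k)-I_2\|^2dk$ and $\int_{\mathbb R}{\rm tr}(I_2-R_Y(t_1,t_2,k)^*R_Y(t_1,t_2,k))\,dk$ tend to $0$ like $\mathcal I(V)_{[t_1,t_2]}$; hence $R_Y(t,\cdot)$ is Cauchy in $L^2(\mathbb R)$, its limit $\tilde Y(\infty,k)$ satisfies $\Psi^{-1}(0,t,k)Y(t,k)=R_Y(t,k)\to\tilde Y(\infty,k)$ in $L^2(\mathbb R)$, and passing to the limit in (\ref{est5}) gives $\int_{\mathbb R}\|\tilde Y(\infty,k)-I_2\|^2dk\le2\pi I''(V)$.

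The only genuinely new ingredient, and the step I expect to be the main obstacle, is the matrix version of the asymptotic ODE analysis needed to put $\psi=\tfrac12{\rm tr}\,R_Y$ into $H^\infty(\mathbb C^+)$ and to evaluate the coefficient $\mathcal I(V)_t$ — in particular verifying that the $W(0,t)$--factors cancel under the trace and that the would--be divergent $(\lambda_{j+1}-\lambda_j)^{-1}$ contribution disappears once the $(j,j+1)$ coupling is gauged into $\Psi$; everything else is the argument of Theorems~\ref{main1}--\ref{main2}.
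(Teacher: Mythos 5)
Your reduction $Y(t,k)=\Psi(0,t,k)R_Y(t,k)$, the leakage bound, the algebraic identity $\|R_Y-I\|_{HS}^2=-\mathrm{tr}(I-R_Y^*R_Y)-4\Re h\le-4\Re h$ on $\mathbb{R}$, and the concluding semigroup/Cauchy argument are all sound and close to what the paper does. The genuine gap is the central claim that $R_Y(t,k)=e^{-ik\lambda_jt}W^*(0,t)Y(t,k)$ is bounded on $\mathbb{C}^+$, so that $\psi=\tfrac12\mathrm{tr}\,R_Y$ lies in the unit ball of $H^\infty(\mathbb{C}^+)$ with $\psi(t,iy)=1-\mathcal I(V)_t/(2y)+\overline{o}(y^{-1})$. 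This is false in general. The second Duhamel term of $e^{-ik\lambda_jt}x_{jj}(t,k)$ coming from a lower mode $m<j$ (such a mode exists since $j\ge2$) equals
\[
-\int_0^t\int_0^{\tau_1}e^{ik(\lambda_m-\lambda_j)(\tau_1-\tau_2)}V_{jm}(\tau_1)V_{mj}(\tau_2)\,d\tau_2\,d\tau_1,
\]
and for $k=iy$ the kernel is $e^{y(\lambda_j-\lambda_m)(\tau_1-\tau_2)}$ with $\tau_1>\tau_2$ and $\lambda_j>\lambda_m$: it grows exponentially in $y$. Thus $R_Y$ has genuine positive exponential type in $\mathbb{C}^+$ (only the a priori bound $\|R_Y(t,iy)\|\le e^{y\lambda_jt}$ is available), Phragm\'en--Lindel\"of does not apply, the Poisson comparison behind your trace formula is unavailable, and $\psi(t,iy)\not\to1$. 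The asymptotic ODE analysis of Section 3 does not rescue this: the object whose large $\Im k$ behavior it controls is the solution of the two-point boundary value problem with $a(T)=0$ --- exactly the condition that kills these growing exponentials --- and that solution is related to $Y$ only through determinant and cofactor identities; it is not a block of the Cauchy-problem matrix $R$.

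This is precisely why the paper routes the argument through $A(T,k)$, the $2\times2$ block of cofactors of $X_{j+1}$: by Lemma \ref{contr} the adjugate of a contraction is a contraction, so $A(T,k)$ is a bona fide analytic contraction on $\overline{\mathbb{C}^+}$, and $C(T,k)=A(T,k)\Psi(0,T,k)e^{-ikT(\lambda_1+\cdots+\lambda_{j-1}+2\lambda_j)}$ carries the correct exponential factor to be bounded with computable asymptotics as $\Im k\to+\infty$. The trace-formula step is performed on $C=I+H$, and only on the real line is the result converted back to $Y$, via $Y\cdot\mathrm{adj}\,Y=\det Y\cdot I$ together with the separately derived $L^2(\mathbb{R})$ estimate (\ref{dete}) for $\det Y$ and the border estimates (\ref{borders}). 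If you wish to keep your gauge-transformation framework you must replace $\mathrm{tr}\,R_Y$ by a cofactor/determinant observable of this kind; as written, the step you flagged as ``the main obstacle'' is not a technical gap but a false statement.
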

\begin{proof}
We repeat the proofs of theorems \ref{main1} and \ref{main2} with
the following modifications. Instead of a single vector $u$
satisfying $b(0,k)=1$, we consider its $N\times 2$ matrix version.
Let us denote the matrix containing first $j-1$ rows of $u$ by $a$,
$b$ is formed by $j, j+1$ rows and is therefore $2\times 2$ matrix,
and $c$ is built of $j+2, \ldots,N$ rows of $u$. Then, the boundary
conditions would be
\[
a(T,k)=0,\quad b(0,k)=I_{2\times 2},\quad c(0,k)=0
\]
The analogs of (\ref{ur}) and (\ref{vgw}) are
\[
X_{j+1}(T,k) \left[
\begin{array}{c}
a(0,k)\\ I_{2\times 2} \end{array}\right]= \left[
\begin{array}{c}
0\\ b(T,k) \end{array}\right]
\]
Multiplying from the left with the adjoint of $X_{j+1}(T,k)$ and
taking the $2\times 2$ blocks in the ``southeastern corner", we have
\begin{eqnarray}
\Delta_{j+1}(T,k) \cdot I_{2\times 2}= A(T,k)\cdot
b(T,k),\hspace{3cm}\nonumber \\\hspace{3cm} \quad\quad\quad\quad
A(T,k)=\left[
\begin{array}{cc}
A_{jj}(T,k) & A_{j+1,j}(T,k)\\
A_{j,j+1}(T,k) & A_{j+1,j+1}(T,k)
\end{array}
 \right]\label{mver}
\end{eqnarray}
By Remark 6, we already know that
\[
\int\limits_\mathbb{R}
|\Delta_{j+1}(T,k)-\exp(ikT(\lambda_1+\ldots+\lambda_{j-1}+2\lambda_j))|^2dk\lesssim
I''(V)
\]

\[
\int\limits_\mathbb{R}
|\Delta_{j-1}(T,k)-\exp(ikT(\lambda_1+\ldots+\lambda_{j-1}))|^2dk\lesssim
I''(V)
\]
and
\begin{eqnarray}
\int\limits_\mathbb{R} \sum\limits_{l\neq j,j+1}
\Bigl[|x_{l,j}(T,k)|^2+|x_{l,j+1}(T,k)|^2\nonumber
\hspace{3cm}\\\hspace{3cm} +|x_{j,l}(T,k)|^2+|x_{j+1,l}(T,k)|^2
\Bigr] dk\lesssim I''(V) \label{borders}
\end{eqnarray}

Combining these estimates and using the Laplace theorem for determinants, we
have

\begin{equation}
\int\limits_\mathbb{R} |\det Y(T,k)-\exp(2ikT\lambda_j)|^2dk\lesssim
I''(V) \label{dete}
\end{equation}

 The analog of (\ref{inteq}) is
\begin{equation}
\left\{
\begin{array}{l}
\displaystyle a(t)=-\int_t^T U_1(\tau,t,k)\left[
Q_{12}(\tau)b(\tau)+Q_{13}(\tau)c(\tau)\right]d\tau
\\
\displaystyle b(t)=\Psi(0,t,k)+\int_0^t \Psi(\tau,t,k)\left[
Q_{21}(\tau)a(\tau)+Q_{23}(\tau)c(\tau)\right]d\tau
\\
\displaystyle c(t)=\int_0^t U_2(\tau,t,k)\left[
Q_{31}(\tau)a(\tau)+Q_{32}(\tau)b(\tau)\right]d\tau
\end{array}\right.\label{inteq1}
\end{equation}
The similar perturbation argument gives
\[
b(T,iy)=\Psi(0,T,iy)\left(I_{2\times
2}+y^{-1}\Gamma+\bar{o}(y^{-1})\right), \quad y\to +\infty
\]
where
\begin{eqnarray*}
\Gamma=\int_0^T
W(t,0)Q_{23}(t)(\Lambda_c-\lambda_j)^{-1}Q_{32}(t)W(0,t)dt\hspace{2cm}
\\
-\int_0^T
W(t,0)Q_{21}(t)(\lambda_j-\Lambda_a)^{-1}Q_{12}(t)W(0,t)dt
\end{eqnarray*}
Consider
\begin{equation}
C(T,k)=A(T,k)\Psi(0,T,k)\exp(-ikT(\lambda_1+\ldots
+\lambda_{j-1}+2\lambda_j)) \label{oprc}
\end{equation}
Since we know asymptotical expansion for $\Delta_{j+1}$ as $\Im k\to +\infty$,
(\ref{mver}) yields
\[
C(T,iy)=I_{2\times 2}-y^{-1}(\Gamma+I_{j+1}(V)\cdot I_{2\times
2})+\bar{o}(y^{-1})
\]
Notice that \[0\leq \Gamma+I_{j+1}(V)\cdot I_{2\times
2} \lesssim  I''(V)\]

By lemma \ref{contr}, $A(T,k)$ is contraction for $\Im k\geq 0$ and so is $C(T,k)$
 for real $k$. $C(T,k)$ is also entire in $k$ and we know its
asymptotics for large $\Im k$ which implies that $C(T,k)$ is contraction for $k\in \mathbb{C}^+$. Write $C(T,k)=I_{2\times 2}+H(T,k)$.
Then,
\begin{equation}
2\Re H(T,k)+|H(T,k)|^2\leq 0 \label{sos}
\end{equation}
Since $(\Re H(T,k)\xi,\xi)$ is harmonic for any $\xi\in \mathbb{C}^2$, comparison of
asymptotics for $y\to\infty$ gives
\[
-\int\limits_\mathbb{R} \Re H(T,k)dk\lesssim  I''(V)
\]
and then (\ref{sos}) yields
\begin{equation}
\int\limits_\mathbb{R}  \|H(T,k)\|^2dk\lesssim I''(V) \label{eh}
\end{equation}
Next, notice that
\[
A(T,k)=\Delta_{j-1}(T,k)\left[
\begin{array}{cc}
x_{j+1, j+1}(T,k) & -x_{j,j+1}(T,k)\\
-x_{j+1,j}(T,k) & x_{j,j}(T,k)
\end{array}
 \right]+r(T,k)
\]
and
\[
\int\limits_\mathbb{R} \|r(T,k)\|^2dk\lesssim I''(V)
\]
due to (\ref{borders}). On the other hand, we know the asymptotics
of $\Delta_{j-1}(T,k)$ which together with  (\ref{oprc}) and (\ref{eh}) give
\begin{equation}
\int\limits_\mathbb{R} \|\exp(2ikT\lambda_j)-({\rm
adj\,}Y(T,k))\cdot\Psi(0,T,k)\|^2dk\lesssim I''(V) \label{redka}
\end{equation}
since
\[
{\rm adj}\, Y(T,k)=\left[
\begin{array}{cc}
x_{j+1, j+1}(T,k) & -x_{j,j+1}(T,k)\\
-x_{j+1,j}(T,k) & x_{j,j}(T,k)
\end{array}
 \right]
\]
Denote the matrix under the norm in (\ref{redka}) by $\mu$. We have $\|Y\mu\|\leq
\|\mu\|$ since $Y$ is a contraction and therefore
\[
\int\limits_\mathbb{R} \|\exp(2ikT\lambda_j)Y(T,k)-\det Y(T,k)\cdot
\Psi(0,T,k)\|^2dk\lesssim I''(V)
\]
By (\ref{dete}), we have (\ref{est5}). The rest is standard.
\end{proof}
The method can be carried over to the case when the multiplicity of
frequencies are higher than $2$. Using these results we can obtain
the ``asymptotics'' of solution for any $\lambda_1\leq \ldots\leq
\lambda_N$ provided that $\|V\| \in L^2(\mathbb{R}^+)$. However, the
constants in our estimates will blow up when some $\delta_j=\lambda_{j+1}-\lambda_j\sim 0$.\bigskip

Assume we are in the situation when
$\lambda_1=\lambda_2=\ldots=\lambda_m<\lambda_{m+1}$. The simple
matrix version of lemma \ref{lemochka} gives
\begin{proposition}
If $\lambda_1=\lambda_2=\ldots=\lambda_m<\lambda_{m+1}$ and
\[\tilde{v}(t)=\|P_{\{1,\ldots, m\}}V(t)P_{\{m+1,\ldots\}}\|\in
L^2(\mathbb{R}^+)\] then

\[
\int\limits_\mathbb{R} \|P_{\{m+1,\ldots\}}X(T,k)e_n\|^2dk \lesssim
\|\tilde{v}\|^2\lambda_{m+1}^{-1}, \quad n=1,\ldots, m
\]
\end{proposition}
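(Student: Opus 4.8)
The proposition is the degenerate-frequency analogue of Lemma~\ref{lemochka}(b). The ground frequencies $\lambda_1=\dots=\lambda_m$ all equal a common value $\lambda_1$, and the goal is to bound the mass that $X(T,k)$ pushes out of the block $\mathrm{span}\{e_1,\dots,e_m\}$ onto the higher modes. The plan is to mimic the argument for $\psi(t)=P_1^c u$ in the proof of Lemma~\ref{lemochka}, but now with the projection $P=P_{\{1,\dots,m\}}$ in place of $P_1$, and with $u(t,k)=X(t,k)e_n$ for a fixed $n\le m$.

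First I would split $u=Pu+\psi$ with $\psi=P^c u=P_{\{m+1,\dots\}}u$, and write the differential equation for $\psi$:
\[
\psi_t = ik\, P^c\Lambda P^c\psi + i\, P^c V P^c\psi + i\, P^c V P u,\qquad \psi(0)=P^c e_n = 0,
\]
using $n\le m$. The key structural facts are that $P^c\Lambda P^c\ge \lambda_{m+1} I$ on the range of $P^c$ (since $\lambda_{m+1}\le\lambda_j$ for $j>m$), that $P^cVP^c$ is antisymmetric (so it contributes nothing to $\tfrac{d}{dt}\|\psi\|^2$), and that $\|P^cVPu\|\le \|P^cVP\|\cdot\|Pu\|\le \tilde v(t)\cdot\|u\|\le \tilde v(t)$ because $\|P^c V P\|\le \|P_{\{1,\dots,m\}}V P_{\{m+1,\dots\}}\| = \tilde v(t)$ by taking adjoints and $\|u(t,k)\|\le 1$ on $\overline{\mathbb C^+}$ (Lemma~3.1). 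This yields the differential inequality
\[
\frac{d}{dt}\|\psi\|^2 \le -2\lambda_{m+1}(\Im k)\|\psi\|^2 + 2\tilde v(t)\|\psi\|,\qquad \|\psi(0,k)\|=0,
\]
hence $\|\psi(t,k)\|\lesssim \int_0^t e^{-\alpha(t-\tau)}\tilde v(\tau)\,d\tau$ with $\alpha=\lambda_{m+1}\Im k$, exactly as in the scalar computation. This part is routine.

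The remaining step is to integrate this over $k\in\mathbb R$, i.e.\ to pass from the pointwise-in-$\Im k$ bound to an $L^2(\mathbb R)$ bound, which is where the real content of the statement lies. Here I would not use the Gronwall bound in the form above (that loses a factor of $\Im k$ and would give the wrong power), but instead follow the energy identity of Lemma~3.1 directly: from $|X(t,k)|^2 + 2\Im k\int_0^t X^*\Lambda X\,d\tau = I$ applied to $e_n$, and using $\Lambda\ge 0$ with $\Lambda\ge \lambda_{m+1}$ on $\mathrm{ran}\,P^c$, one gets $2\lambda_{m+1}\Im k\int_0^\infty \|\psi(\tau,k)\|^2 d\tau\le 1$; but one needs the sharper bound controlled by $\|\tilde v\|_2^2$ rather than by $1$. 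The way to get it is the same device as in Lemma~\ref{lemochka}: obtain $\big(\int_0^\infty\|\psi(\tau,k)\|^2 d\tau\big)^{1/2}\lesssim (\lambda_{m+1}\Im k)^{-1}\|\tilde v\|_2$ from Young's inequality applied to the Duhamel representation, then use that $\|P_{\{m+1,\dots\}}X(T,k)e_n\|$ itself — via the representation $\psi(T,k)=\int_0^T e^{ik(T-\tau)P^c\Lambda P^c}\,[\,\cdots\,]\,d\tau$ and Plancherel in $k$ — has $\int_{\mathbb R}\|\psi(T,k)\|^2 dk$ equal to (up to a constant) the time-integral $\int_0^T\|P^cVPu\|^2\,d\tau \le \int_0^T \tilde v(\tau)^2 d\tau = \|\tilde v\|_2^2$, with the extra factor $\lambda_{m+1}^{-1}$ coming from the Jacobian of the change of variables $\omega\mapsto \lambda_j\omega$ in each higher coordinate (exactly the scaling that produced $\lambda_1^{-1}$ in Lemma~\ref{lemochka}(c)--(d)).

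The main obstacle, then, is bookkeeping the two different uses of $\Im k$: the exponential decay in the Duhamel kernel gives the bound for fixed $\Im k>0$, but the clean $L^2(\mathbb R)$ estimate with constant $\lambda_{m+1}^{-1}\|\tilde v\|_2^2$ must come from the Plancherel/trace-identity route rather than from Gronwall. Concretely I expect the cleanest path is: bound $\|P^c V P\,u\| \le \tilde v(t)$ as above; treat $\psi$ as the solution of $\psi_t = ikP^c\Lambda P^c\psi + g(t,k)$ with $\|g\|_{L^2_t}\le \|\tilde v\|_2$ (absorbing the antisymmetric $P^cVP^c$ term into a unitary conjugation); diagonalize $P^c\Lambda P^c$; and apply the Fourier-side estimate $\int_{\mathbb R}\big|\int_0^T e^{i\lambda k\tau}g(\tau)\,d\tau\big|^2 dk = 2\pi\lambda^{-1}\int_0^T|g|^2 d\tau$ coordinate by coordinate, with $\lambda\ge\lambda_{m+1}$. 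Summing over the finitely many higher coordinates gives the claimed bound. Everything else is a direct transcription of the proof of Lemma~\ref{lemochka}.
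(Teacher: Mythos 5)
Your setup is right, and you have correctly located the crux: the differential inequality for $\psi=P_{\{m+1,\ldots\}}X(t,k)e_n$ and the resulting Duhamel/Gronwall bounds live only in $\Im k>0$, while the proposition is an integral over \emph{real} $k$. But the device you propose for that last step does not work. After diagonalizing $P^c\Lambda P^c$ and absorbing $P^cVP^c$ into a unitary conjugation, the Duhamel source is $g(\tau,k)=iW^{-1}(\tau,k)e^{-ik\Lambda_c\tau}P^cV(\tau)PX(\tau,k)e_n$, and both $W(\tau,k)$ and $X(\tau,k)e_n$ depend on $k$ in an uncontrolled way. The identity $\int_{\mathbb R}\bigl|\int_0^Te^{i\lambda k\tau}g(\tau)\,d\tau\bigr|^2dk=2\pi\lambda^{-1}\int_0^T|g(\tau)|^2d\tau$ requires the non-oscillatory factor to be independent of $k$; with a $k$-dependent $g$ it is false, and one cannot even salvage an inequality, since the source can resonate with the phase. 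Freezing $u\approx Pu$ to make the source $k$-independent does not repair this: the error is again controlled only through $\int_0^T\|\psi(\tau,k)\|^2d\tau$ for \emph{real} $k$, which is precisely what is unavailable, and a perturbation series is ruled out because the hypothesis controls only the off-diagonal block $PVP^c$ while $PVP$ and $P^cVP^c$ are merely locally integrable.

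The mechanism the paper relies on (this is what ``the simple matrix version of Lemma~\ref{lemochka}'' means, and it is the engine of Theorem~\ref{main1}) is complex-analytic rather than Fourier-analytic in $k$. Put $B(T,k)=P_{\{1,\ldots,m\}}X(T,k)P_{\{1,\ldots,m\}}$. For real $k$, unitarity of $X$ gives $\sum_{n\le m}\|P^cX(T,k)e_n\|^2=\operatorname{tr}(I_m-B^*B)\le-2\ln|\det B(T,k)|$. One then shows that $\det B(T,k)$ is an analytic contraction in $\overline{\mathbb{C}^+}$ (Hadamard's bound, as in Lemma~\ref{contr}), computes its asymptotics as $\Im k\to+\infty$ from the second Duhamel iterate as in (\ref{magic}) --- this is where the resolvent factor $(\Lambda_c-\lambda_1)^{-1}\le\lambda_{m+1}^{-1}$ produces the constant $\lambda_{m+1}^{-1}\|\tilde v\|_2^2$, the degeneracy inside the block being harmless by the cancellation noted in Remark~6 --- and finally compares asymptotics of the subharmonic function $\ln|\det B(T,k)|$ along $k=iy$ to obtain $\int_{\mathbb R}\ln|\det B(T,k)|\,dk\gtrsim-\lambda_{m+1}^{-1}\|\tilde v\|_2^2$, exactly as in (\ref{second2}) and (\ref{h2n}). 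Your first paragraph (the dissipative estimate for $\psi$ in $\mathbb{C}^+$) is the input to that asymptotic computation, so it is not wasted; but without the contraction/trace-formula step the $dk$-integral over the real line cannot be closed.
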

If $\lambda_{m+1}>>1$, this means rather strong localization of the
solution. Therefore, we pose the following

{\bf Open problem.} Is it possible to improve the estimate on
\[
\int\limits_\mathbb{R} \|P_{\{m,\ldots\}} X(T,k)e_1\|^2dk
\]
for large $m$ assuming only  $\lambda_1<\lambda_2<\ldots$ and some
off-diagonal decay  for $V$? The conjecture might be that
\begin{equation}
\sup_{T>0}\int\limits_\mathbb{R} \|\sqrt\Lambda
X(T,k)e_1\|^2dk<\infty \label{gen}
\end{equation}
for suitable $L^2$ condition on $V$. That could lead to better
understanding of  Schr\"odinger evolution on the circle. \bigskip

 The calculations below will be extensively used in later sections to handle special evolutions equations. We will empasize the dependence of $I'(V)$ on $j$ by writing $I'_j(V)$.
\begin{itemize}
\item[(a)]
Let $V_{ij}(t)=q_{i-j}(t)$ where $q_j(t)=\overline{q_{-j}(t)}$ and
\[
\|q\|_2^2=\int\limits_0^\infty \sum\limits_{l=1}^N
|q_l(t)|^2dt<\infty
\]
Assume also that $|\lambda_l-\lambda_m| \gtrsim |l-m|$. Then for any
 $j$ we have (remember that $q_0(t)=0$)
\[
I'_j(V)\lesssim \sum\limits_{k\leq 0} \sum\limits_{l\geq 0}
\frac{1}{|k-l|} \int\limits_0^\infty |q_{k-l}(t)|^2dt\lesssim
\|q\|_2^2
\]

\item[(b)] Take the same $V$ but assume that $\lambda_j\sim j^m$.
Then,
\begin{equation}
I'_j(V)\lesssim \sum\limits_{k=1}^j \sum\limits_{l=j}^\infty
\frac{1}{l^m-k^m} \int\limits_0^\infty |q_{k-l}(t)|^2dt\lesssim
j^{-(m-1)} \|q\|_2^2 \label{calc1}
\end{equation}

\end{itemize}

In the second case, the condition on $V$ can be relaxed. If $\lambda_{2j}=\lambda_{2j+1}\sim j^m$, the estimate for $I''_j(V)$ is similar.

\section{The case of Hilbert spaces and applications to Schr\"{o}dinger evolution on the circle}

 In this section, some results from the previous section are
 generalized to the case $N=\infty$.
 We will also give various applications to the
Schr\"odinger evolution on the circle.

Consider the selfadjoint operator $\Lambda$ on
$\cal{H}=\ell^2(\mathbb{Z})$ with discrete spectrum $\{\lambda_n\}$
where $\lambda_n$ is nondecreasing sequence. Let $Q(t)$ be
operator-valued function with norm $\|Q(t)\|$ bounded for a.e. $t$
and
\begin{equation}\|Q(t)\|\in L^1_{\rm
loc}(\mathbb{R}^+)\label{regw}\end{equation}

The weak solution to
\begin{equation}
u_t(t,k)=ik\Lambda u(t,k)+Q(t)u(t,k), \quad u(0,k)=\psi \label{weak}
\end{equation}
is  the solution to
\[
u(t,k)=X_0(0,t,k)\psi+\int_0^t X_0(\tau,t,k)Q(\tau)u(\tau,k)d\tau
\]
which follows from the Duhamel formula. Here $X_0(\tau,t,k)$ denotes
solution to the unperturbed evolution. We will write
$u(t,k)=X(0,t,k)\psi$.

There are some general results that prove the weak solution is in
fact a ``strong solution" provided that the initial value and
potential $Q$ are ``regular enough". We will study the behavior of
weak solution. The condition (\ref{regw}) is sufficient for the
iterations of (\ref{weak}) to converge in the space $L^\infty
([0,T],\cal{H})$ for any $T>0$ with the obvious estimate
\[
\|u(t,k)\|\leq \exp\left(\int_0^t \|Q(\tau)\|d\tau\right) \|\psi\|
\]
The following stability result will allow us to use the standard
approximation technique. Let $\Pi_n=P_{\{-n,\ldots, n\}}$, a
projection in $\ell^2(\mathbb{Z})$.
\begin{lemma}{\rm\bf  (Approximation lemma).}
If $\|Q(t)\|\in L^1(0,T)$ and   $Q_n(t)=\Pi_n Q(t)\Pi_n$, then
$\|u_n(T,k)-u(T,k)\|\to 0$ as $n\to \infty$.
\end{lemma}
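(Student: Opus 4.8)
The plan is to prove the Approximation Lemma by a direct Duhamel/Gr\"onwall comparison between $u_n$ and $u$, exploiting that $\Pi_n\to I$ strongly and that the unperturbed propagator $X_0(\tau,t,k)$ is unitary (hence a contraction) for $k\in\mathbb{R}$ and commutes with each $\Pi_n$. Write $u(t,k)=X_0(0,t,k)\psi+\int_0^t X_0(\tau,t,k)Q(\tau)u(\tau,k)\,d\tau$ and the analogous identity for $u_n$ with $\psi$ replaced by $\Pi_n\psi$ and $Q$ by $Q_n=\Pi_nQ\Pi_n$. Subtracting and using $X_0(\tau,t,k)\Pi_n=\Pi_n X_0(\tau,t,k)$, the difference $w_n(t)=u_n(t,k)-u(t,k)$ satisfies
\[
w_n(t)=X_0(0,t,k)(\Pi_n-I)\psi+\int_0^t X_0(\tau,t,k)\bigl[Q_n(\tau)u_n(\tau,k)-Q(\tau)u(\tau,k)\bigr]d\tau.
\]
Then I would split the integrand as $Q_n(w_n)+(Q_n-Q)u$, so that
\[
\|w_n(t)\|\le \|(\Pi_n-I)\psi\|+\int_0^t \|Q(\tau)\|\,\|w_n(\tau)\|\,d\tau+\int_0^T \|(Q_n(\tau)-Q(\tau))u(\tau,k)\|\,d\tau,
\]
using $\|Q_n(\tau)\|\le\|Q(\tau)\|$ and the contractivity of $X_0$.

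Next I would argue that the two inhomogeneous terms tend to $0$ as $n\to\infty$. The first, $\|(\Pi_n-I)\psi\|$, goes to $0$ because $\Pi_n\to I$ strongly on $\mathcal H$. For the second, write $Q_n(\tau)-Q(\tau)=\Pi_n Q(\tau)(\Pi_n-I)+(\Pi_n-I)Q(\tau)$, so that
\[
\|(Q_n(\tau)-Q(\tau))u(\tau,k)\|\le \|Q(\tau)\|\,\|(\Pi_n-I)u(\tau,k)\|+\|(\Pi_n-I)Q(\tau)u(\tau,k)\|.
\]
For fixed $\tau$ each of these converges to $0$ as $n\to\infty$ by strong convergence $\Pi_n\to I$, and both are dominated by $2\|Q(\tau)\|\,\|u(\tau,k)\|\le 2\|Q(\tau)\|\exp(\int_0^T\|Q\|)\|\psi\|$, which lies in $L^1(0,T)$ by hypothesis \eqref{regw}. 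Hence by the dominated convergence theorem $\int_0^T \|(Q_n(\tau)-Q(\tau))u(\tau,k)\|\,d\tau\to 0$. Denote by $\varepsilon_n$ the sum of this quantity and $\|(\Pi_n-I)\psi\|$; then $\varepsilon_n\to 0$.

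Finally I would close the estimate with Gr\"onwall's inequality: from $\|w_n(t)\|\le \varepsilon_n+\int_0^t\|Q(\tau)\|\,\|w_n(\tau)\|\,d\tau$ we get $\|w_n(t)\|\le \varepsilon_n\exp\!\bigl(\int_0^t\|Q(\tau)\|\,d\tau\bigr)$, so in particular $\|u_n(T,k)-u(T,k)\|\le \varepsilon_n\exp\!\bigl(\int_0^T\|Q(\tau)\|\,d\tau\bigr)\to 0$. The only mildly delicate point is the dominated convergence step for the term involving $Q_n-Q$: one must be careful that the pointwise (in $\tau$) convergence of $\|(\Pi_n-I)Q(\tau)u(\tau,k)\|$ to $0$ is legitimate even though $Q(\tau)u(\tau,k)$ is only measurable in $\tau$, and that the $L^1$ majorant is the one supplied by the a priori bound on $\|u(\tau,k)\|$; everything else is routine.
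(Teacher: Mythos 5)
Your argument is correct, but it is not the route the paper takes. The paper proves the lemma at the level of the Dyson series: it writes $u$ and $u_n$ as sums of multilinear terms $G_m(Q,\ldots,Q)$ and $G_m(Q_n,\ldots,Q_n)$, telescopes each difference $G_m(Q_n,\ldots,Q_n)-G_m(Q,\ldots,Q)$ into $m$ pieces each containing a single factor $Q-Q_n$, sends each piece to zero by dominated convergence, and controls the tail of the series with the factorial bound $|G_m|\leq \frac{1}{m!}\bigl(\int_0^T\|Q\|\bigr)^m\|\psi\|$. You instead compare the two integral equations directly, isolate the source term $(Q_n-Q)u$, and close with Gr\"onwall. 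Both arguments rest on the same two ingredients --- contractivity (unitarity for real $k$) of $X_0$ and dominated convergence driven by the strong convergence $\Pi_n\to I$ together with the $L^1$ majorant $\|Q(\tau)\|\sup_\tau\|u(\tau,k)\|$ --- so neither is more general; yours is shorter and avoids the bookkeeping of the series, while the paper's version is the natural continuation of the existence proof by iteration and keeps explicit control of each order of the expansion. Two trivial remarks: your term $\|(\Pi_n-I)\psi\|$ corresponds to taking $u_n(0)=\Pi_n\psi$; if one keeps $u_n(0)=\psi$ (which is what the lemma's statement suggests, since only $Q$ is truncated) that term simply disappears. And the measurability/continuity of $\tau\mapsto u(\tau,k)$ needed for your dominated-convergence and Gr\"onwall steps is automatic from the integral equation with $\|Q\|\in L^1$, so the "delicate point" you flag is indeed harmless.
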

\begin{proof}
Each term in the corresponding series is a multilinear operator
\begin{eqnarray*}
G_m(Q,\ldots, Q)=\int_0^T\int_0^{\tau_1}\ldots\int_0^{\tau_{m-1}}
X_0(\tau_1,T,k)Q(\tau_1)X_0(\tau_2,\tau_1,k)\ldots\\
Q(\tau_m)X_0(0,\tau_m,k)\psi d\tau_1\ldots d\tau_m
\end{eqnarray*}
or
\begin{eqnarray*}
{G}_m(Q_n,\ldots,
Q_n)=\int_0^T\int_0^{\tau_1}\ldots\int_0^{\tau_{m-1}}
X_0(\tau_1,T,k)Q_n(\tau_1)X_0(\tau_2,\tau_1,k)\ldots\\
Q_n(\tau_m)X_0(0,\tau_m,k)\psi d\tau_1\ldots d\tau_m
\end{eqnarray*}
Write
\[
{G}_m(Q_n,\ldots, Q_n)={G}_m(Q_n,\ldots, Q_n, Q)+{G}_m(Q_n,\ldots,
Q_n, Q_n-Q)
\]
and use linearity for the second term, etc. Then,
\[
{G}_m(Q_n,\ldots, Q_n)-{G}_m(Q,\ldots, Q)
\]
can be written as a sum of $m$ terms and each of them converges to
zero. Indeed,
\begin{eqnarray*}
\int_0^T\ldots \int_0^{\tau_{m-1}} \|Q_n(\tau_1)\|\cdot \ldots \cdot
\|Q_n(\tau_{j-1}\|\cdot
\|\Bigl(Q(\tau_j)-Q_n(\tau_j)\Bigr)X_0(\tau_{j+1},\tau_j,k)Q(\tau_{j+1})\\
\ldots Q(\tau_m)X_0(0,\tau_m,k)\psi\|d\tau_1\ldots d\tau_m\to 0
\end{eqnarray*}
by dominated convergence theorem. Therefore
\[
\sum_{m\geq 1} G_m(Q_n,\ldots, Q_n)\to \sum_{m\geq 1} G_m(Q,\ldots,
Q), \quad n\to \infty
\]
since we also have a bound
\[
|G_m(V,\ldots,V)|\leq \frac{1}{m!}\left(\int_0^T
\|V(t)\|dt\right)^m\|\psi\|
\]
which takes care of the tails in the series.
\end{proof}

It is now easy to prove
\begin{lemma}
If $Q(t)=iV(t)$, where $V(t)$ is selfadjoint and $\|V(t)\|\in
L^1_{\rm loc}(\mathbb{R}^+)$, then $X(\tau,t,k)$ is unitary and it
satisfies the semigroup property
\[
X(t_1,t_2,k)\cdot X(t_0,t_1,k)=X(t_0,t_2,k)
\]
\end{lemma}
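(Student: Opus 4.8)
The plan is to deduce both claims from the Approximation Lemma together with the elementary (and already essentially recorded) facts for the finite-dimensional truncated systems. First I would fix $T>0$ and recall that for $Q_n(t)=\Pi_n Q(t)\Pi_n$ the evolution $X_n(t_0,t_1,k)$ lives on the finite-dimensional space $\Pi_n\mathcal H$ (extended by the identity on $\Pi_n^c\mathcal H$, since $Q_n$ annihilates that subspace). When $Q(t)=iV(t)$ with $V(t)$ selfadjoint, $Q_n(t)=i\Pi_n V(t)\Pi_n$ is $i$ times a bounded selfadjoint operator, so the truncated Cauchy problem $\frac{d}{dt}X_n=i(k\Lambda+\Pi_n V(t)\Pi_n)X_n$ — for real $k$ — has a solution that is unitary: this is the standard computation $\frac{d}{dt}(X_n^*X_n)=X_n^*\bigl(-i(k\Lambda+\Pi_nV\Pi_n)^*+i(k\Lambda+\Pi_nV\Pi_n)\bigr)X_n=0$, valid because $k\Lambda+\Pi_nV(t)\Pi_n$ is selfadjoint. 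The semigroup (cocycle) property $X_n(t_1,t_2,k)X_n(t_0,t_1,k)=X_n(t_0,t_2,k)$ is immediate from uniqueness of solutions to the linear ODE with the given initial data at time $t_1$.

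Next I would pass to the limit $n\to\infty$. By the Approximation Lemma, for every $\psi\in\mathcal H$ and every pair $t_0\le t_1$ we have $X_n(t_0,t_1,k)\psi\to X(t_0,t_1,k)\psi$ in $\mathcal H$ (applied on the interval $[t_0,t_1]$ in place of $[0,T]$, which is legitimate since $\|Q\|\in L^1_{\rm loc}$ gives $\|Q\|\in L^1(t_0,t_1)$). Strong convergence preserves isometry: $\|X(t_0,t_1,k)\psi\|=\lim_n\|X_n(t_0,t_1,k)\psi\|=\|\psi\|$, so $X(t_0,t_1,k)$ is a linear isometry. For unitarity I also need surjectivity; the clean way is to run the same argument for the backward evolution, i.e.\ observe that $X(t_1,t_0,k)$ (the weak solution propagated from time $t_1$ back to $t_0$) is also an isometry by the same reasoning, and then show $X(t_1,t_0,k)$ is a two-sided inverse of $X(t_0,t_1,k)$. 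That identity is exactly the cocycle relation with indices $t_0,t_1,t_0$, so it follows from the semigroup property once the latter is established in the limit.

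For the semigroup property in the limit, I would argue by strong convergence again: fix $\psi$ and write $X(t_1,t_2,k)X(t_0,t_1,k)\psi - X_n(t_1,t_2,k)X_n(t_0,t_1,k)\psi$ as $X(t_1,t_2,k)\bigl(X(t_0,t_1,k)-X_n(t_0,t_1,k)\bigr)\psi + \bigl(X(t_1,t_2,k)-X_n(t_1,t_2,k)\bigr)X_n(t_0,t_1,k)\psi$. The first term tends to $0$ because $X(t_1,t_2,k)$ is a bounded (indeed contractive, once we know isometry) operator and the Approximation Lemma handles the inner difference; the second term tends to $0$ because $X_n(t_0,t_1,k)\psi$ is a bounded sequence converging to $X(t_0,t_1,k)\psi$, so one splits it as $X_n(t_0,t_1,k)\psi = X(t_0,t_1,k)\psi + o(1)$ and applies the Approximation Lemma to $X(t_1,t_2,k)$ composed with the fixed limiting vector (plus a uniform bound $\|X_n(t_1,t_2,k)\|\le \exp\int_{t_1}^{t_2}\|Q\|$ to absorb the $o(1)$). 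Since $X_n(t_1,t_2,k)X_n(t_0,t_1,k)=X_n(t_0,t_2,k)\to X(t_0,t_2,k)\psi$, comparing limits gives $X(t_1,t_2,k)X(t_0,t_1,k)=X(t_0,t_2,k)$.

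The only mildly delicate point — and the one I would be most careful about — is the surjectivity needed for genuine unitarity (as opposed to mere isometry): one must not simply quote ``strong limit of unitaries'' because that only yields an isometry in general. The remedy, as indicated above, is to note that the cocycle identity $X(t_0,t_1,k)X(t_1,t_0,k)=X(t_0,t_0,k)=I$ and its mirror provide an explicit inverse, so $X(t_0,t_1,k)$ is invertible with inverse $X(t_1,t_0,k)$; being an invertible isometry on a Hilbert space, it is unitary. Everything else is a routine combination of the Approximation Lemma, uniqueness for linear ODEs, and the a priori bound $\|X(t_0,t_1,k)\|\le\exp\int_{t_0}^{t_1}\|Q(\tau)\|\,d\tau$ already recorded in the text.
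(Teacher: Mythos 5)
Your argument is correct and follows essentially the same route as the paper: the paper likewise derives the semigroup property and norm preservation from the Approximation Lemma together with the finite-dimensional (truncated) results, and then invokes the two-sided identity $X(t_0,t_1,k)X(t_1,t_0,k)=X(t_1,t_0,k)X(t_0,t_1,k)=I$ to upgrade the isometry to unitarity. Your write-up merely fills in the details that the paper leaves implicit, including the correct caution that a strong limit of unitaries is a priori only an isometry.
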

\begin{proof}
The semigroup property and preservation of the norm follow from the
Approximation lemma and the corresponding results for finite systems
of ODE's. We also have
\[
X(0,t,k)\cdot X(t,0,k)=X(t,0,k)\cdot X(0,t,k)=I
\]
which implies that $X$ is unitary.
\end{proof}

Next, we  prove an  analog of theorem \ref{main2}. Denote the matrix
elements of $V(t)$ and $X(t,k)$ by $V_{mn}(t)$ and $x_{mn}(t,k)$,
respectively. For simplicity, we again make an assumption that
$V_{nn}(t)=0$ for any $n$.
\begin{theorem}
Assume that $\|V\|\in L^1_{\rm loc}(\mathbb{R}^+)$, $\lambda_{-1}<\lambda_0=0<\lambda_1$, and
\[
I'(V)=\sum\limits_{k\leq 0}\sum\limits_{l\geq 0}
|\lambda_l-\lambda_k|^{-1} \int\limits_0^\infty
|V_{kl}(\tau)|^2d\tau<\infty, \quad (V_{00}(t)=0)
\]
Then,
\[
\int\limits_\mathbb{R} |x_{00}(t,k)-x_{00}(\infty,k)|^2dk\to 0
\]
\[
\int\limits_\mathbb{R} |x_{00}(\infty,k)-1|^2dk\lesssim I'(V)
\]\label{apat}
\end{theorem}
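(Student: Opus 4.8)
The plan is to reduce Theorem \ref{apat} to the finite-dimensional Theorem \ref{main2} by an approximation argument, and then carry out the same semigroup/Cauchy-sequence scheme that was used there. First I would recall that by the Approximation lemma and the unitarity lemma, $X(t,k)$ for the infinite system is the strong limit of the finite truncations $X_n(t,k)$ coming from $Q_n = \Pi_n Q \Pi_n$; moreover the truncated systems have eigenvalue $\lambda_0 = 0$ with the neighbours $\lambda_{-1} < 0 < \lambda_1$ nondegenerate, so Remark 6 applies and Theorem \ref{main2} (with the index $j$ equal to the position of $\lambda_0$, and $I'_n(V) \le I'(V)$ uniformly in $n$) gives, for each $n$, existence of $\tilde x_{00}^{(n)}(\infty,k)$ with $\int_\mathbb{R} |x_{00}^{(n)}(t,k) - \exp(i\lambda_0 t k)|^2\,dk = \int_\mathbb{R}|x_{00}^{(n)}(t,k)-1|^2\,dk \lesssim I'(V)$ and the corresponding $H^2$ Cauchy bound. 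The point of keeping constants uniform in $n$ is that they depend on $V$ only through $I'(V)$ and through the fixed gaps $\lambda_{-1},\lambda_1$.

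Next I would pass to the limit. For fixed $t$ and $k \in \mathbb{C}^+$ the Approximation lemma gives $x_{00}^{(n)}(t,k) \to x_{00}(t,k)$ pointwise; combined with the uniform bound $|x_{00}^{(n)}(t,k)| \le 1$ on $\overline{\mathbb{C}^+}$ and the uniform $L^2(\mathbb{R})$ bound on $1 - x_{00}^{(n)}(t,\cdot)$, a routine weak-compactness-plus-pointwise-limit argument identifies the $L^2$-weak limit as $1 - x_{00}(t,\cdot)$ and yields $\int_\mathbb{R}|x_{00}(t,k)-1|^2\,dk \lesssim I'(V)$ for the infinite system, uniformly in $t$. (Here one also uses that $\lambda_0 = 0$ so the comparison function is simply $1$.) The same reasoning transfers the estimate (\ref{h2})-type bound $\int_\mathbb{R}\sum_{l\ne 0}\bigl(|x_{l0}(t,k)|^2 + |x_{0l}(t,k)|^2\bigr)\,dk \lesssim I'(V)$ to the infinite system, since each finite partial sum over $|l| \le m$ is a limit of the truncated quantities.

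Then I would run the convergence argument of Theorem \ref{main2} directly in $\ell^2(\mathbb{Z})$. Using the semigroup property $X(t_2,k) = X(t_1,t_2,k)X(t_1,k)$ one expands, isolating the $(0,0)$ entry: $x_{00}(t_2,k) - x_{00}(t_1,k) = \bigl(x_{00}(t_1,t_2,k) - 1\bigr)x_{00}(t_1,k) + \sum_{l\ne 0} x_{0l}(t_1,t_2,k)\,x_{l0}(t_1,k)$. The first term is controlled in $L^2(\mathbb{R})$ by the bound on $\|1 - x_{00}(t_1,t_2,\cdot)\|_2$, which tends to $0$ as $t_1 \to \infty$ by the uniform estimate applied on the time interval $[t_1,t_2]$ (the tail $I'(V)$ restricted to $[t_1,t_2]$ goes to zero). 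For the second term, Cauchy-Schwarz in $l$ gives $\bigl|\sum_{l\ne 0} x_{0l}(t_1,t_2,k)x_{l0}(t_1,k)\bigr|^2 \le \bigl(\sum_{l\ne 0}|x_{0l}(t_1,t_2,k)|^2\bigr)\bigl(\sum_{l\ne 0}|x_{l0}(t_1,k)|^2\bigr) \le \bigl(\sum_{l\ne 0}|x_{0l}(t_1,t_2,k)|^2\bigr)$, using unitarity so that $\sum_{l}|x_{l0}(t_1,k)|^2 = 1$; integrating in $k$ and using the transferred (\ref{h2})-bound on $[t_1,t_2]$ shows this also tends to $0$ in $L^2(\mathbb{R})$. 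Hence $x_{00}(t,\cdot)$ is Cauchy in $L^2(\mathbb{R})$, its limit $x_{00}(\infty,\cdot)$ satisfies $\int_\mathbb{R}|x_{00}(\infty,k)-1|^2\,dk \lesssim I'(V)$, and it coincides with the weak-$\ast$ limit already available from the Szeg\H{o}-type estimates.

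The main obstacle I anticipate is making the uniformity in the truncation parameter $n$ genuinely clean: one must check that the constants produced by Theorem \ref{main2} (and by the auxiliary lemmas on adjugate contractions and on determinant asymptotics) depend on $V$ strictly through $I'(V)$ and through the two fixed gaps around $\lambda_0$, and not on the whole spectrum or on $\|V\|_{L^1_{\mathrm{loc}}}$, so that they survive the limit $n \to \infty$; the Approximation lemma handles the convergence itself but not, a priori, the uniformity of these quantitative bounds. Once that bookkeeping is in place, everything else is the same semigroup-plus-Cauchy-sequence routine as in the finite-dimensional theorems, with $\lambda_0 = 0$ simplifying the exponential factors to $1$.
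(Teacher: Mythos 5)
Your proposal is correct and follows essentially the same route as the paper: truncate the potential, invoke Theorem \ref{main2} (via Remark 6) with estimates uniform in the truncation parameter because they depend only on $I'(V)$ and the gaps around $\lambda_0=0$, pass to the limit using the Approximation lemma, and then run the semigroup-plus-Cauchy argument in $L^2(\mathbb{R},dk)$. The paper compresses the final step into ``the rest is the standard application of semigroup property and unitarity of $X$,'' which is exactly the expansion you wrote out.
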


\begin{proof}
For truncated potentials $V^{(n)}=\Pi_n V\Pi_n$, the theorem
\ref{main2} is applicable and the resulting estimates are uniform in
$n$. Since for each fixed $k$ we have convergence
\[
x^{(n)}_{jl}(t,k)\to x_{jl}(t,k), \quad n\to\infty
\]
and uniform estimates
\[
\int\limits_\mathbb{R} |x_{00}^{(n)}(t,k)-1|^2dk\lesssim I'(V)
\]
\[
\int\limits_\mathbb{R} \sum\limits_{l\neq 0}
(|x_{0l}^{(n)}(t,k)|^2+|x_{l0}^{(n)}(t,k)|^2)dk\lesssim I'(V)
\]
we can go to the limit as $n\to\infty$ to get
\[
\int\limits_\mathbb{R} |x_{00}(t,k)-1|^2dk\lesssim I'(V), \quad
\int\limits_\mathbb{R} \sum\limits_{l\neq 0}
(|x_{0l}(t,k)|^2+|x_{l0}(t,k)|^2)dk\lesssim I'(V)
\]
The rest is the standard application of semigroup property and unitarity of $X$.
\end{proof}
Most results from the previous section can be adjusted similarly
including the case when the frequencies have multiplicity. In
particular, we can consider Schr\"odinger evolution on, say,
one-dimensional circle
\[
u_t=-iku_{\theta\theta}+iV(t,\theta)u, \quad u(0,k)=\psi(\theta)\in L^2(\mathbb{T}).
\]
We are interested in the weak solution and assume that $V$ is
real-valued and $\|V(t,\theta)\|_\infty\in L^1_{\rm
loc}(\mathbb{R}^+)$. In this case, on the Fourier side, equation
takes  form
\[
\hat{u}_t=ik\Lambda \hat{u}+i\hat{V}\ast \hat{u}, \quad \hat{u}(0,k)=\hat\psi\in \ell^2
\]
where $\Lambda$ is diagonal $\lambda_0=0, \lambda_n=n^2$ and all
eigenvalues but the principal one (i.e., $\lambda_0=0$) have
multiplicity $2$. Write $\Lambda$ in the basis $\{ 1,e^{i\theta},
e^{-i\theta}, \ldots, e^{in\theta}, e^{-in\theta}, \ldots\}$
\[
\Lambda=\left[
\begin{array}{cccccc}
0 & 0 & 0 & 0 & 0 &\ldots \\
0 & 1 & 0 & 0  & 0 &\ldots \\
0 & 0 & 1 & 0   & 0 &\ldots \\
0 & 0 & 0 & 4 & 0 & \ldots \\
0 &  0 &  0 &  0 &   4 &\ldots\\
\ldots & \ldots &  \ldots & \ldots & \ldots & \ldots
\end{array}
\right]
\]

 We also always assume without loss of generality
that
\[
\int\limits_{\mathbb{T}} V(t,\theta)d\theta=0, \quad t>0
\]
Consider the following $k$--independent evolution
\[
\Psi_n'(t)=i\left[
\begin{array}{cc}
0 & \hat{V}(2n,t)\\
\overline{\hat{V}}(2n,t) & 0
\end{array}
\right]\Psi_n(t), \quad \Psi_n(0)=I_{2\times 2}, \quad n\geq 1
\]
where
\[
\hat{V}(2n,t)=\int\limits_{\mathbb{T}} V(t,\theta)e^{2in\theta}d\theta
\]
Notice that if $\hat{V}(2n,t)=0$ for all $t$, then $\Psi_n(t)$ are
trivial. Also, if $\hat{V}(2n,t)$ is real or purely imaginary, we
can write the explicit formula for $\Psi_n(t)$. That can be
satisfied, e.g., if $V$ is even or odd.

 Take
\[
W(0,t,k)=X_0(0,t,k)\cdot
\left[
\begin{array}{cccc}
1 & 0 & 0 & \ldots \\
0 & \Psi_1(t) & 0 &\ldots\\
0 & 0 & \Psi_2(t) & \ldots
\end{array}
\right]
\]
where $X_0(0,t,k)$ is the free Schr\"odinger evolution.

\begin{theorem}
Assume that
\begin{equation}
\|V(t,\theta)\|_\infty\in L^1_{\rm loc}(\mathbb{R}^+) \label{nons}
\end{equation}
and $V(t,\theta)\in L^2(\mathbb{R}^+\times \mathbb{T})$. Then for
any $\psi\in L^2(\mathbb{T})$ the weak solution $u(t,k)$ satisfies
\[
\int\limits_I \|W^{-1}(0,t,k)\hat{u}(t,k)-\widehat{H\psi}\|^2dk\to
0, \quad {\rm as} \quad t\to\infty
\]
for any $I\subset \mathbb{R}, |I|<\infty$. The operator $H$ is
defined as bounded operator from $L^2(\mathbb{T})$ to the space of
functions $h(\theta,k)$ satisfying $
\|h(\theta,k)\|^2_{L^2(\mathbb{T})}\in L^1_{\rm loc}(\mathbb{R})$.
\end{theorem}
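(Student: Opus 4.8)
The plan is to reduce the infinite system to the finite ones of Section~3 through the truncation $V^{(n)}=\Pi_nV\Pi_n$, exactly as in the proof of Theorem~\ref{apat}, and then to interchange the limits $t\to\infty$, $n\to\infty$ and the Fourier expansion of $\psi$. The scheme hinges on two facts. First, the gauged propagator $T(t,k):=W^{-1}(0,t,k)X(0,t,k)$ (so that $W^{-1}(0,t,k)\hat u(t,k)=T(t,k)\psi$) is unitary on $\ell^2(\mathbb{Z})$ for every real $k$, since $X$ is unitary and $W$ is the product of the unitary free evolution with the unitary block-diagonal matrix $\mathrm{diag}(1,\Psi_1(t),\Psi_2(t),\dots)$. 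Second, because $\lambda_n=n^2$, the quantities $I_j(V),I'_j(V),I''_j(V)$ of Section~3 are not merely finite but satisfy $I_j(V),I'_j(V),I''_j(V)\lesssim j^{-1}\|V\|^2_{L^2(\mathbb{R}^+\times\mathbb{T})}$ by the computation~\eqref{calc1} with $m=2$ (and the remark following it for the degenerate version), so the sums over high Fourier blocks and the integrals over late times that arise are tails of convergent series.

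First I would fix $n$ and examine $V^{(n)}$ in the basis $\{1,e^{i\theta},e^{-i\theta},e^{2i\theta},e^{-2i\theta},\dots\}$: the perturbation is Toeplitz, $V_{kl}(t)=\hat V(l-k,t)$, the frequency list is $0<1=1<4=4<\dots$, and the intra-pair coupling at frequency $p^2$ is exactly $\hat V(2p,t)$, with solution operator $\Psi_p(t)$. Peeling off the $\lambda_0=0$ coordinate by Lemma~\ref{lemochka} and each later pair by the higher-multiplicity version of Theorem~\ref{main3} (see the remark following it), and invoking the estimate~\eqref{h2} (legitimate at the index separating two consecutive pairs, by the remark following Theorem~\ref{main1}), I would obtain the bounds, uniform in $t$ and $n$,
\[
\int_\mathbb{R}\sum_{|m|\le M}\ \sum_{|l|>M}|x^{(n)}_{lm}(t,k)|^2\,dk\ \lesssim\ I_M(V):=\sum_{|k|\le M}\sum_{|l|>M}\frac{1}{|\lambda_l-\lambda_k|}\int_0^\infty|V_{kl}(\tau)|^2\,d\tau\ \lesssim\ M^{-1}\|V\|^2_{L^2}
\]
(the last step by~\eqref{calc1}), together with, on any interval $[t_1,t_2]$ and with $W,X$ replaced by the propagators on $[t_1,t_2]$, $\int_\mathbb{R}\|[W^{-1}(t_1,t_2,k)X(t_1,t_2,k)-I]e_l\|^2\,dk\lesssim E_l(t_1):=\sum_{l'}|\lambda_{l'}-\lambda_l|^{-1}\int_{t_1}^\infty|V_{l'l}(\tau)|^2\,d\tau\to 0$ as $t_1\to\infty$. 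Since $x^{(n)}_{lm}(t,k)\to x_{lm}(t,k)$ pointwise by the Approximation lemma, Fatou's lemma transfers all of this to the infinite system.

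The main step is to show that $T(t,k)\psi$ is Cauchy in $L^2(I,\ell^2)$ as $t\to\infty$ for every finite interval $I$ and every $\psi\in\ell^2$. Using the semigroup property of $W$ (valid since $\Lambda$ and $\mathrm{diag}(1,\Psi_1,\dots)$ commute block by block) and of $X$, together with unitarity of $W^{-1}(0,t_1,k)$, one has for $t_1<t_2$
\[
\bigl\|[T(t_2,k)-T(t_1,k)]\psi\bigr\|=\bigl\|[W^{-1}(t_1,t_2,k)X(t_1,t_2,k)-I]\,X(0,t_1,k)\psi\bigr\|.
\]
I would split $\psi=\Pi_M\psi+(I-\Pi_M)\psi$; by unitarity of $T$ the second piece contributes at most $2\sqrt{|I|}\,\|(I-\Pi_M)\psi\|$ to the $L^2(I,\ell^2)$-norm of the difference, uniformly in $t_1,t_2$. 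For the first piece I would split once more, $X(0,t_1,k)\Pi_M\psi=\Pi_{M'}X(0,t_1,k)\Pi_M\psi+(I-\Pi_{M'})X(0,t_1,k)\Pi_M\psi$ with $M'\ge M$: the displayed bound of the previous paragraph (and $\|W^{-1}X-I\|\le 2$) makes the $L^2(I,\ell^2)$-norm squared of the contribution of the last term $\lesssim\|\psi\|^2(M')^{-1}\|V\|^2$, uniformly in $t_1$; and for the $\Pi_{M'}$ term, Cauchy--Schwarz over the $2M'+1$ surviving modes bounds its $L^2(I,\ell^2)$-norm squared by $\lesssim\|\psi\|^2\sum_{|l|\le M'}\int_\mathbb{R}\|[W^{-1}(t_1,t_2,k)X(t_1,t_2,k)-I]e_l\|^2\,dk\lesssim\|\psi\|^2\sum_{|l|\le M'}E_l(t_1)$, which for fixed $M'$ tends to $0$ as $t_1\to\infty$. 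Choosing first $M$, then $M'$, then $t_1$ large makes the difference arbitrarily small, so $T(t,k)\psi$ converges in $L^2(I,\ell^2)$. Defining $\widehat{H\psi}(k)$ to be this limit (consistently over all $I$) yields $\|(H\psi)(\cdot,k)\|^2_{L^2(\mathbb{T})}\in L^1_{\rm loc}(\mathbb{R})$, linearity of $H$, and — passing $\|T(t,k)\psi\|=\|\psi\|$ to the limit — $\|\widehat{H\psi}(k)\|_{\ell^2}=\|\psi\|$ for a.e.\ $k$, hence $\|H\psi\|_{L^2(I,L^2(\mathbb{T}))}=\sqrt{|I|}\,\|\psi\|_{L^2(\mathbb{T})}$; this gives the boundedness of $H$ into the stated target space.

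The genuine difficulty is this triple interchange of limits, and the delicate point — the reason only local-in-$k$ convergence can be asserted — is that unitarity of $T(t,k)$ tames the tail $(I-\Pi_M)\psi$ only \emph{after} $k$ is confined to a set of finite measure, because $\int_\mathbb{R}\|(I-\Pi_M)\psi\|^2\,dk=\infty$. The decay $I_j,I'_j,I''_j\lesssim j^{-1}\|V\|^2_{L^2}$, forced by the growing gaps $\lambda_{n+1}-\lambda_n=2n+1$, is exactly what makes the $M'$- and $t_1$-tails above summable; absent an off-diagonal decay condition on $V$ strong enough to beat the spectral gaps this last ingredient fails, which is the theme deferred to the concluding section.
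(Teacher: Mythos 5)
Your proposal is correct and follows essentially the same route as the paper: the Approximation Lemma plus the finite--dimensional Theorems \ref{main2}/\ref{main3} and the scaling (\ref{calc1}) give the uniform tail bound in the mode index and the vanishing-in-$t_1$ bound on $W^{-1}(t_1,t_2,k)X(t_1,t_2,k)-I$, and unitarity together with the semigroup property yields the Cauchy property and the isometry of $H$. The only (cosmetic) difference is that you run the Cauchy argument directly for general $\psi$ via the double truncation $\Pi_M,\Pi_{M'}$, whereas the paper first proves $L^2(\mathbb{R},\ell^2)$ convergence column by column for each Fourier mode and then extends to general $\psi$ by density and the isometry identity.
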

\begin{proof}
 Denote the
matrix elements of $\tilde{X}(t,k)=W^{-1}(0,t,k)X(t,k)$ by
$\tilde{x}_{mn}(t,k)$. Fix $n$ and take, say,  $\psi(\theta)=e^{
in\theta}$. Let $\hat{u}$ be the corresponding solution, i.e. the
$\alpha(n)$--th column of $\tilde{X}$. The choice
$\psi(\theta)=e^{-in\theta}$ gives the $\alpha+1$--th column.
 From the Approximation lemma, theorem~\ref{apat} (adapted by the theorem \ref{main3}), and (\ref{calc1}) we know

\[
\sum\limits_{m\in \mathbb{Z}^+} |\tilde{x}_{m\alpha}(t,k)|^2=1,
\quad t>0, k\in \mathbb{R}
\]
\[
\int\limits_\mathbb{R} |1-\tilde{x}_{\alpha\alpha}(t,k)|^2dk\lesssim
\|V\|^2,\quad \int\limits_\mathbb{R}
|\tilde{x}_{\alpha\alpha}(t,k)-\tilde{x}_{\alpha\alpha}(\infty,k)|^2dk\to
0
\]
and
\[
\int\limits_\mathbb{R} \sum\limits_{m\neq \alpha}
|\tilde{x}_{m\alpha}(t,k)|^2dk\lesssim \|V\|^2, \quad
\int\limits_\mathbb{R}
|\tilde{x}_{m\alpha}(t,k)-\tilde{x}_{m\alpha}(\infty,k)|^2dk\to 0,
\quad m\neq \alpha
\] Moreover, (\ref{h2}) gives the following uniform estimates
\begin{equation}
\int\limits_\mathbb{R} \sum\limits_{|m|>l} |\tilde{x}_{m\alpha
}(t,k)|^2dk\lesssim C(n) l^{-1}\|V\|^2, \quad l>>n \label{sss}
\end{equation}
which implies
\begin{equation}
\int\limits_\mathbb{R} \sum\limits_{m\neq \alpha}
|m|^\gamma|\tilde{x}_{m\alpha}(t,k)|^2dk\lesssim C(n)\|V\|^2
\label{smooth}
\end{equation}
for any $\gamma<1$. Therefore,
\[
\int\limits_\mathbb{R} \sum_{m> 0}
|\tilde{x}_{m\alpha}(t,k)-\tilde{x}_{m\alpha}(\infty,k)|^2dk\to 0
\]
By linearity, we can prove existence of the limit for any
 trigonometric polynomial
$\psi=T(\theta)$. Denote the corresponding limit by
$[HT](\theta,k)$. That gives a linear operator $H$ defined on the
set dense in $L^2(\mathbb{T})$. We have
\[ \|\tilde{X}(t,k)\hat{T}\|=\|T\|
\]
for any $k$ and $t$ and so for any $I\subset \mathbb{R}$
\[
\int\limits_I\int\limits_\mathbb{T} |[HT](\theta,k)|^2d\theta
dk=|I|\cdot \|T\|^2
\]
Therefore, $H$ can be extended to a bounded operator on
$L^2(\mathbb{T})$ such that
\[
\int\limits_I\int\limits_\mathbb{T} |[H\psi](\theta,k)|^2d\theta
dk=|I|\cdot \|\psi\|^2
\]
If $\psi $ is fixed, the last identity implies that
$\|H\psi\|=\|\psi\|$ for a.e. $k$.\end{proof}

Notice that the condition (\ref{nons}) was used only to guarantee
the global existence of the weak solution for {\it any} $k$ and can
probably be dropped. The solution corresponding to the initial value
$\psi=1$ is special in a way that we always have
\[
\int_\mathbb{R} \ln |x_{11}(t,k)|dk\gtrsim -\|V\|_2^2
\]
and that means $|x_{11}(t,k)|>0$ for a.e. $k$ (compare with
(\ref{never})).

\bigskip

The following proposition is the direct corollary from
(\ref{smooth})
\begin{proposition}
Take $\psi=1$ and assume that $\|V(t,\theta)\|_\infty\in L^1_{\rm
loc}(\mathbb{R}^+)$ and $V(t,\theta)\in L^2([0,T]\times \mathbb{T})$
for any $T>0$. Then,
\[
\int\limits_\mathbb{R}
\|u(T,\theta,k)\|^2_{\dot{H}^\gamma(\mathbb{T})}dk\lesssim
\int\limits_0^T \int\limits_\mathbb{T}|V(t,\theta)|^2d\theta dt  \]
for any $\gamma<1/2$.
\end{proposition}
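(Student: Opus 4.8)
The plan is to deduce this directly from the uniform estimate (\ref{smooth}) together with the block structure of the conjugating operator $W$. Since $\psi=1$, the Fourier transform $\hat u(T,k)=X(T,k)\hat\psi$ is the column of $X(T,k)$ attached to the constant mode, so $W^{-1}(0,T,k)\hat u(T,k)$ is the corresponding column of $\tilde X(T,k)=W^{-1}(0,T,k)X(T,k)$; denote its $m$-th Fourier component by $\tilde x_m(T,k)$, $m\in\mathbb Z$. For real $k$ the factor $X_0(0,T,k)$ is diagonal and unimodular, while $\mathrm{diag}\bigl(1,\Psi_1(T),\Psi_2(T),\dots\bigr)$ is unitary and acts within each eigenspace of $\Lambda$ — the line $\mathbb C\cdot 1$ attached to the eigenvalue $0$, and the plane $\mathrm{span}\{e^{in\theta},e^{-in\theta}\}$ attached to $n^2$ for $n\ge 1$. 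Hence $W(0,T,k)$ itself preserves every eigenspace of $\Lambda$ and is unitary on each of them, so it preserves the norm of the projection onto each such eigenspace.

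Because $\dot H^\gamma(\mathbb T)$ is diagonalized by these same eigenspaces, with the weight $n^{2\gamma}$ constant on the eigenspace of eigenvalue $n^2$, the previous step gives
\[
\|u(T,\cdot,k)\|_{\dot H^\gamma(\mathbb T)}^2=\sum_{m\neq 0}|m|^{2\gamma}\,|\tilde x_m(T,k)|^2 .
\]
Integrating in $k$ and invoking (\ref{smooth}) with its exponent taken equal to $2\gamma$ — which is allowed precisely because $\gamma<1/2$, so $2\gamma<1$, and the basis index is comparable to the Fourier mode — bounds the left-hand side by an absolute constant (the one attached to the initial mode $n=0$) times $\|V\|^2$. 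It remains only to observe that the chain (\ref{sss})–(\ref{smooth}) consists of finite-time estimates: they descend from the trace inequality (\ref{h2}) applied to $X(T,k)$ followed by the dyadic summation already carried out, so $\|V\|^2$ may everywhere be replaced by $\int_0^T\int_{\mathbb T}|V(t,\theta)|^2\,d\theta\,dt$ as soon as $V\in L^2([0,T]\times\mathbb T)$. That is the asserted inequality.

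The two steps that call for genuine care, as opposed to mere bookkeeping, are that $W$ really respects the eigenspace decomposition of $\Lambda$ — so that the weight $|m|^{2\gamma}$, constant on each eigenspace, passes for free from $\hat u$ to the column of $\tilde X$ — and the reduction, via the Approximation lemma, to the truncated potential $\Pi_n V\Pi_n$ on $[0,T]$, which is what keeps every constant controlled by $\int_0^T\int_{\mathbb T}|V|^2$. Everything else is already contained in the derivation of (\ref{smooth}).
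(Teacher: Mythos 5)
Your proposal is correct and follows the same route as the paper, which presents the proposition as a direct corollary of (\ref{smooth}) applied with $\alpha$ the index of the constant mode; your two explicit observations --- that $W$ is block-unitary on the eigenspaces of $\Lambda$ so the $\dot H^\gamma$ weight passes from $\hat u$ to the column of $\tilde X$, and that the bounds at time $T$ only involve $\int_0^T\int_{\mathbb T}|V|^2$ --- are exactly the details the paper leaves implicit.
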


The analogous bound can be proved for any sufficiently smooth
function $\psi$. Assuming that $V$ is only bounded on the strip
$\mathbb{R}^+\times \mathbb{T}$ this estimate shows that
$k$--averaged  $H^\gamma$ norm is finite and grows not faster than
$\sqrt{t}$.\bigskip

{\bf Remark 7.} Now, assume that
\[
\int\limits_0^T \int\limits_\mathbb{T}|V(t,\theta)|^2d\theta dt  <C
\]
Consider the first $T$ columns in $X(0,T,k)$ and denote by $M$ the
set of those for which
\[
\int_{\mathbb{R}} \sum_{l=T+1}^\infty |x_{lm}(0,T,k)|^2dk>\sigma
T^{-2}
\]
Due to (\ref{h2}), we have $|M|<C\sigma^{-1} T$ and by taking
$\sigma$ large we have that at least a half of the first $T$ columns
are strongly localized for many $k$. In this argument, $M$ can
depend on $T$, in principle. \bigskip

 In the case of transport equation, our method allows us to
reproduce that the solutions are in $H^{1/2}$. Indeed, we have
\[
\int\limits_\mathbb{R} \sum\limits_{m\leq 0, n\geq 0, m\neq n}
|x_{mn}(t,k)|^2dk\lesssim \|V\|^2_2
\]
where $x_{mn}(t,k)$ are the matrix elements of the evolution
operator in the Fourier representation. In the meantime, we have
$x_{mn}(t,k)=\exp(ikmt)x_{n-m,0}(t,k)$ which yields
\[
\int\limits_\mathbb{R} \|X(t,k)\cdot
1\|^2_{\dot{H}^{1/2}(\mathbb{T})}dk=\int\limits_\mathbb{R}
\sum\limits_{n\neq 0} |n|\cdot|x_{n0}(t,k)|^2dk\lesssim \|V\|_2^2
\]

\section{Evolution with deteriorating gap condition: the short-range interactions.}

This section contains the main results of the paper. Unfortunately,
they handle only the short-range potentials and even in this case
are far from optimal.

Consider, e.g., the following model
\begin{equation}
u_t=-ikt^{-2}u_{\theta\theta}+iV(t,\theta)u, \quad t>1 \label{dt}
\end{equation}
$V$ is real and
\[
u(1,\theta)=1
\]
Similar evolution equation appears as the WKB correction in the
three-dimensional Schr\"odinger dynamics \cite{den2}. We assume that
$V$ is real-valued and satisfies
\[
\|V(t,\theta)\|_{L^\infty(\mathbb{T})}\lesssim t^{-\gamma},
\]
where $0\leq \gamma\leq 1$ is to be specified later.

 On the Fourier
side, the equation can be written as
\[
\hat{u}'=ikt^{-2}\Lambda\hat{u}+i\hat{V}\ast \hat{u},\quad
\hat{u}(1)=\delta_0
\]
and $\Lambda$ is diagonal with elements $n^2$, $n\geq 0$. The
multiplicity of each eigenvalue is two as long as $n>0$, the
principal eigenvalue is non-degenerate. Clearly, this case can not
be handled by the methods considered in the previous section since
the distance between eigenvalues decays like $t^{-2}$ which might
lead to significant growth of the Sobolev norms even for ``typical"
$k$. Instead, as results of the previous section suggest, we should
introduce the scaled Sobolev norms

\[
\|u\|_{s,t}=t^{-s}\|u\|_{\dot{H}^s(\mathbb{T})}
\]

{\bf Open problem.} Assume $V(t,\theta)\in L^2([0,\infty)\times
\mathbb{T})$. Is it true that for a.e. $k$ we have
\[
\|u\|_{1,t}\to 0
\]
as $t\to\infty$?\bigskip

This conjecture is supported, e.g., by calculations (\ref{evid})
made for transport equation or by the Remark 7. If true, it implies
\[
\sum\limits_{|n|> Ct} |\hat{u}(t,n)|^2\to 0
\]
for any $C$ and since $\|u\|_2=1$, we have
\[
\sum\limits_{|n|< Ct} |\hat{u}(t,n)|^2\to 1
\]
so the most of $L^2(\mathbb{T})$ norm is concentrated on, roughly,
$t$ first harmonics. We will call this phenomenon the concentration
of $L^2$ norm. It does not seem to be possible to obtain any
asymptotical result similar to the case when the gap condition does
not deteriorate and, perhaps, the ``scattering" for this model
should be defined in terms of the boundedness of scaled Sobolev
norms.\bigskip

  The simple substitution $\tau=t^{-1}$  reduces the problem
to equation
\[
\psi_\tau=ik\psi_{\theta\theta}-i\tau^{-2}V(\tau^{-1},\theta)\psi,
\quad \psi(1)=1,\quad  0<\tau<1
\]
and for $q(\tau,\theta)=-\tau^{-2}V(\tau,\theta)$ we have the
following bound as $\tau\to +0$

\[
\|q(\tau,\theta)\|_{L^\infty(\mathbb{T})}\lesssim \tau^{\gamma-2}
\]
Thus, (\ref{dt}) can be reduced to studying the standard problem on
the circle
\begin{equation}
\psi_t=ik\psi_{\theta\theta}+iq(t,\theta)\psi, \quad
\psi(t,\theta)=1 \label{tough}
\end{equation}
where the potential grows in the controlled way. We will study the
growth of the standard Sobolev norm. Assume for a second that we
could prove ({\bf which we can not!} but compare to (\ref{gen}))
\begin{equation}
\int\limits_{\mathbb{R}}
\|\psi_\theta(t,\theta,k)\|_{L^2(\mathbb{T})}^2dk\lesssim
\int\limits_0^t \|q(\tau,\theta)\|^2_{L^2(\mathbb{T})}d\tau
\label{con}
\end{equation}
Then, for the original problem that would mean
\begin{equation}
\int\limits_{\mathbb{R}}
\|u_\theta(T,\theta,k)\|^2_{L^2(\mathbb{T})}dk\lesssim
\int\limits_0^T t^2\|V(t,\theta)\|^2_{L^2(\mathbb{T})}dt
\end{equation}
and so
\[
\int\limits_\mathbb{R} \|u(T,\theta,k)\|_{1,T}^2dk
=T^{-2}\int\limits_0^T t^2\|V(t,\theta)\|^2_{L^2(\mathbb{T})}dt\to 0
\]
provided that $V\in L^2([0,\infty)\times \mathbb{T})$. Notice also
that by the standard time--scaling it would be sufficient to prove
(\ref{con}) only for $t=1$.\bigskip\bigskip

We will start with rather simple apriori estimates. Consider the
simplified version of (\ref{dt})
\begin{equation}
u_t=ikT^{-2}u_{\theta\theta}+iV(t,\theta)u, \quad
0<t<T,\quad|V(t,\theta)|\lesssim T^{-\gamma}, \quad u(0,\theta)=1
\label{dt1}
\end{equation}

We start with well-known estimate
\begin{lemma}
Assume that  $V(t,\theta)$ is real trigonometric polynomial of
degree smaller than $T^\alpha$ for any $t\in [0,T]$ and
$\quad|V(t,\theta)|\lesssim T^{-\gamma}$. Then, for any $k\in
\mathbb{R}$
\[
T^{-1}\|u(T)\|_{\dot{H}^1(\mathbb{T})}\lesssim T^{\alpha-\gamma}
\]
Therefore,
\[
\sum\limits_{|n|>CT} |u_n(T)|^2\lesssim  T^{\alpha-\gamma}
\]\label{prim1}
\end{lemma}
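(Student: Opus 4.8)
The plan is to derive an energy-type identity for the equation (\ref{dt1}) and then control the commutator term coming from the potential. Writing $u(t)=\sum_n u_n(t)e^{in\theta}$, the free part $ikT^{-2}u_{\theta\theta}$ acts diagonally by $-ikT^{-2}n^2$, which is purely imaginary for real $k$, so it does not change $\|u\|_{\dot H^1}$. Hence all the growth of $\|u(T)\|_{\dot H^1}$ must come from the perturbation $iV(t,\theta)u$. First I would compute
\[
\frac{d}{dt}\|u(t)\|_{\dot H^1(\mathbb{T})}^2
=2\Re\langle \partial_\theta u_t,\partial_\theta u\rangle
=2\Re\langle \partial_\theta(iVu),\partial_\theta u\rangle
=-2\Im\langle (\partial_\theta V)u,\partial_\theta u\rangle,
\]
where the term with $V\partial_\theta u$ drops because $\langle V\partial_\theta u,\partial_\theta u\rangle$ is real ($V$ real). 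So the growth rate is bounded by $2\|\partial_\theta V\|_\infty\|u\|_2\|u\|_{\dot H^1}=2\|\partial_\theta V\|_\infty\|u\|_{\dot H^1}$, since $\|u(t)\|_2=\|u(0)\|_2=1$ by unitarity. This gives $\frac{d}{dt}\|u\|_{\dot H^1}\le \|\partial_\theta V\|_\infty$.

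Next I would use the two hypotheses on $V$: it is a real trigonometric polynomial of degree at most $T^\alpha$, and $|V(t,\theta)|\lesssim T^{-\gamma}$. By Bernstein's inequality for trigonometric polynomials, $\|\partial_\theta V(t,\cdot)\|_\infty\lesssim T^\alpha\|V(t,\cdot)\|_\infty\lesssim T^{\alpha-\gamma}$. Integrating the differential inequality over $[0,T]$ and using $u(0,\theta)=1$, hence $\|u(0)\|_{\dot H^1}=0$, gives
\[
\|u(T)\|_{\dot H^1(\mathbb{T})}\le \int_0^T\|\partial_\theta V(t,\cdot)\|_\infty\,dt\lesssim T\cdot T^{\alpha-\gamma}=T^{1+\alpha-\gamma},
\]
so $T^{-1}\|u(T)\|_{\dot H^1}\lesssim T^{\alpha-\gamma}$, which is the claimed bound. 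The second statement follows immediately: $\sum_{|n|>CT}|u_n(T)|^2\le (CT)^{-2}\sum_n n^2|u_n(T)|^2=(CT)^{-2}\|u(T)\|_{\dot H^1}^2\lesssim T^{2(\alpha-\gamma)}$ (absorbing the constant $C$), which is the stated inequality up to the harmless constant.

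The only point requiring a little care is the justification of the energy identity itself, i.e. that $u(t)$ is regular enough in $\theta$ for $\frac{d}{dt}\|u\|_{\dot H^1}^2$ to be computed by differentiating under the sum; but since $V$ is a trigonometric polynomial of bounded degree and the initial data is smooth, standard ODE theory on each finite block (or the Approximation lemma together with the a priori bound just derived) makes $u(t,\cdot)$ smooth, so this is routine rather than a genuine obstacle. I do not expect any real difficulty here — the lemma is an a priori estimate whose whole content is the Bernstein bound on $\|\partial_\theta V\|_\infty$ combined with the cancellation of the free evolution and of the $V\partial_\theta u$ term in the $\dot H^1$ energy balance.
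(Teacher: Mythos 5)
Your argument is correct and is essentially the paper's own proof: both differentiate the equation in $\theta$, observe that the free part and the $Vu_\theta$ term drop out of the $\dot H^1$ energy balance for real $k$ and real $V$, bound the remaining commutator term by $\|V_\theta\|_\infty\|u\|_2\|u_\theta\|_2$ with $\|u\|_2=1$, and invoke Bernstein to get $\|V_\theta\|_\infty\lesssim T^{\alpha-\gamma}$ (the paper closes the Gronwall-type inequality with a maximum argument rather than a differential inequality, which is immaterial). Your Chebyshev step even yields the slightly stronger tail bound $T^{2(\alpha-\gamma)}$, which implies the stated $T^{\alpha-\gamma}$ in the only nontrivial regime $\alpha<\gamma$.
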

\begin{proof}
The proof is elementary. Differentiating (\ref{dt1}) in angle,
multiplying by $\bar{u}_\theta$ and integrating, we get
\[
\|u_\theta(t)\|_2^2\leq 2\int\limits_0^t
\|V_\theta(\tau)\|_{L^\infty(\mathbb{T})}\|u_\theta(\tau)\|_2d\tau
\]
If $\max\limits_{t\in [0,T]} \|u_\theta(t)\|_2=\|u_\theta(t_m)\|_2$,
then
\[
\|u_\theta(t_m)\|_2\leq 2\int\limits_0^T \|V_\theta(t)\|_\infty
dt\leq 2T^{1+\alpha-\gamma}
\]
by Bernstein.
\end{proof}
Clearly, we have concentration of $L^2$ norm for all $k$ as long as
$\alpha<\gamma$. This argument holds for transport equation as well
and can be easily modified to control the higher Sobolev norms. On
the other hand, for the transport equation, the $L^2$ norm can
really smear over first $T^{1-\gamma}$ harmonics as can be easily
seen from van der Corput lemma applied to (\ref{inst}).\bigskip

If one writes
\[
u(t,\theta)=\exp\left(i \int_0^t
V(\tau,\theta)d\tau\right)\psi(t,\theta)
\]
in the previous lemma, then the equation for $\psi$ reads
\[
\psi_t=ikT^{-2}\psi_{\theta\theta}+I,
\]
where

\[
I=-2kT^{-2}\psi_\theta\int_0^t V_\theta(\tau,\theta)d\tau+ik\psi
T^{-2} \left(i\int_0^t
V_{\theta\theta}(\tau,\theta)d\tau-\left(\int_0^tV_\theta(\tau,\theta)d\tau\right)^2\right)
\]
For $I$, we have
\[
\|I(t)\|_2\lesssim  T^{2(\alpha-\gamma)}+T^{2\alpha-\gamma-1}
\]
by the previous lemma.

Thus, if $1+2\alpha<2\gamma$, then $\|\psi(T,\theta)-1\|_2\lesssim
T^{1+2\alpha-2\gamma}\to 0$ by Duhamel formula which proves the
standard WKB asymptotics of solution for the range
$\alpha<\gamma-1/2$.\bigskip\bigskip

In the case just considered, the potential had an extra smoothness
in $\theta$. The other extreme case is when $V$ oscillates.
\begin{lemma}
Assume that  $V(t,\theta)$ is real trigonometric polynomial,
$|V(t,\theta)|\lesssim T^{-\gamma}$, and $\hat{V}(n,t)=0$ for
$|n|<T^\alpha$ and $|n|>CT$. Then,
\[
\int_\mathbb{R} |\hat{u}_0(T,k)-1|^2dk\lesssim T^{5-2\alpha-4\gamma}
\]
\begin{proof}
On the Fourier side, apply the Duhamel formula to
$\hat{u}(t,k)=\exp(ikT^{-2}\Lambda t)\psi(t,k)$ to get
\[
\psi(t,k)=\delta_0+i\int_0^t e^{-ikT^{-2}\Lambda
\tau}\hat{V}(\tau)e^{ikT^{-2}\Lambda \tau}\psi(\tau,k)d\tau
\]
Taking the scalar product with $\delta_0$ and integrating by parts
\[
\langle \psi(T,k), \delta_0\rangle=1+I,
\]
where \[ I= i\int_0^T\langle \psi'(t,k),\int_t^T e^{-ikT^{-2}\Lambda
\tau}\hat{V}(\tau)e^{ikT^{-2}\Lambda \tau}\delta_0d\tau\rangle dt
\]
and
\[
\|I\|_{L^2(\mathbb{R},dk)}\lesssim T^{-\gamma} \int_0^T
\left\|\int_t^T e^{-ikT^{-2}\Lambda
\tau}\hat{V}(\tau)e^{ikT^{-2}\Lambda \tau
}\delta_0d\tau\right\|_{L^2(\mathbb{R},dk)} dt
\]
By Plancherel,
\[
\left(\int_\mathbb{R} \|I\|_2^2dk\right)^{1/2}\lesssim
T^{5/2-\alpha-2\gamma}
\]
due to the limitations on the support of $\hat{V}$.
\end{proof}
\end{lemma}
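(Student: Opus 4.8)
The plan is to pass to the interaction representation, reduce $\hat{u}_0(T,k)-1$ to a single time integral against $\delta_0$, and then recover the required $L^2(dk)$ decay by one integration by parts in $t$, exploiting that the time derivative of the profile is as small as the potential itself. Write $\hat{u}(t,k)=e^{ikT^{-2}\Lambda t}\psi(t,k)$, so that $\psi$ solves $\psi'=i\tilde{V}(t,k)\psi$, $\psi(0)=\delta_0$, where $\tilde{V}(t,k)\colon\varphi\mapsto e^{-ikT^{-2}\Lambda t}\bigl(\hat{V}(t)\ast(e^{ikT^{-2}\Lambda t}\varphi)\bigr)$ is, for real $k$, a bounded self-adjoint operator with $\|\tilde{V}(t,k)\|\le\|V(t,\cdot)\|_{L^\infty(\mathbb{T})}\lesssim T^{-\gamma}$, and $\|\psi(t,k)\|=1$ by unitarity. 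Since $\lambda_0=0$ the free flow fixes $\delta_0$, hence $\hat{u}_0(T,k)-1=\langle\psi(T,k),\delta_0\rangle-1=i\int_0^T\langle\psi(t,k),\tilde{V}(t,k)\delta_0\rangle\,dt$. One cannot stop here: bounding $\|\psi\|=1$ leaves $\int_0^T\|\tilde{V}(t,k)\delta_0\|_{L^2(dk)}\,dt$, and $\|\tilde{V}(t,k)\delta_0\|_{\ell^2}$ does not depend on $k$, so this quantity is infinite. This divergence is the real obstacle, and the cure is the integration by parts.

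I would set $\Phi(t,k)=\int_t^T\tilde{V}(s,k)\delta_0\,ds$, so that $\Phi(T,k)=0$ and $\Phi'(t,k)=-\tilde{V}(t,k)\delta_0$. Integrating by parts, the boundary term at $t=T$ vanishes, while the one at $t=0$ equals $\int_0^T\langle\delta_0,\tilde{V}(s,k)\delta_0\rangle\,ds=\int_0^T\hat{V}(0,s)\,ds=0$, because the hypothesis $\hat{V}(n,s)=0$ for $|n|<T^\alpha$ forces $\hat{V}(0,s)\equiv0$. Thus $\hat{u}_0(T,k)-1=i\int_0^T\langle\psi'(t,k),\Phi(t,k)\rangle\,dt$. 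Now $\|\psi'(t,k)\|=\|\tilde{V}(t,k)\psi(t,k)\|\lesssim T^{-\gamma}$, so Cauchy--Schwarz pointwise in $k$ followed by Minkowski's integral inequality in the $k$ variable gives $\|\hat{u}_0(T,\cdot)-1\|_{L^2(dk)}\lesssim T^{-\gamma}\int_0^T\bigl(\int_{\mathbb{R}}\|\Phi(t,k)\|_{\ell^2}^2\,dk\bigr)^{1/2}\,dt$.

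The heart of the matter is a uniform-in-$t$ bound on $\int_{\mathbb{R}}\|\Phi(t,k)\|_{\ell^2}^2\,dk$. Since $e^{ikT^{-2}\Lambda s}\delta_0=\delta_0$, the $n$-th coordinate of $\Phi(t,k)$ is $\int_t^T e^{-ikT^{-2}n^2 s}\hat{V}(n,s)\,ds$, so $\int_{\mathbb{R}}\|\Phi(t,k)\|_{\ell^2}^2\,dk=\sum_{n\neq0}\int_{\mathbb{R}}\bigl|\int_t^T e^{-ikT^{-2}n^2 s}\hat{V}(n,s)\,ds\bigr|^2\,dk$. For each $n\neq0$ I substitute $\xi=kT^{-2}n^2$ and apply Plancherel in $s$: the Jacobian contributes $T^2 n^{-2}$, and on the support $T^\alpha\le|n|\le CT$ one has $n^{-2}\le T^{-2\alpha}$. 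Summing over $n$, using Parseval in $\theta$ and $\|V(s,\cdot)\|_{L^2(\mathbb{T})}^2\lesssim\|V(s,\cdot)\|_\infty^2\lesssim T^{-2\gamma}$, one gets $\int_{\mathbb{R}}\|\Phi(t,k)\|_{\ell^2}^2\,dk\lesssim T^{2-2\alpha}\int_t^T\|V(s,\cdot)\|_{L^2(\mathbb{T})}^2\,ds\lesssim T^{3-2\alpha-2\gamma}$, uniformly in $t\in[0,T]$. Substituting back, $\|\hat{u}_0(T,\cdot)-1\|_{L^2(dk)}\lesssim T^{-\gamma}\cdot T\cdot T^{3/2-\alpha-\gamma}=T^{5/2-\alpha-2\gamma}$, and squaring yields the claimed estimate.

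The classical ingredients, namely conjugating out the free evolution and Plancherel after the rescaling $k\mapsto kT^{-2}n^2$, are routine; the single genuine idea, and the step I expect to be the main obstacle if one tries to bypass it, is \emph{the integration by parts in $t$}. Without it the time integral carries no oscillation and its $L^2(dk)$ norm diverges; with it, the smallness $T^{-\gamma}$ of $\psi'$ and the two-sided frequency localization $T^\alpha\le|n|\le CT$ of $V$ together close the estimate.
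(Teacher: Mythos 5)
Your argument is correct and follows essentially the same route as the paper: pass to the interaction representation, integrate by parts in $t$ to put the derivative on $\psi$ (where it is $O(T^{-\gamma})$ by unitarity), then bound $\int_{\mathbb{R}}\|\Phi(t,k)\|^2dk$ by Plancherel after rescaling $k\mapsto kT^{-2}n^2$ and use $|n|\geq T^\alpha$ on the support of $\hat V$. Your treatment of the boundary terms (vanishing at $t=T$ by construction and at $t=0$ because $\hat V(0,s)\equiv 0$) makes explicit a step the paper leaves implicit, but the proof is the same.
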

Clearly, by taking $\alpha+2\gamma>5/2$, we have localization of
almost all of the $L^2$--norm on the first harmonic for most $k$ but
this argument does not say much about the Sobolev norms.
\bigskip\bigskip\bigskip

 In the rest of this section, we will focus on
(\ref{tough}) with short range potential, e.g.
$V(t,\theta)=\cos(\theta)q(t)$. For simplicity, we start with the
following problem where all eigenvalues are non-degenerate
\begin{equation}
x_t=ik\Lambda x+iQ x, \quad x(0,k)=\delta_0 \label{model3}
\end{equation}
where $\Lambda$ is diagonal with eigenvalues $\lambda_n=n^2$,
$n=0,1,\ldots$ and $Q$ is symmetric Toeplitz operator:
$Q_{mn}(t)=q_{m-n}(t), q_0(t)=0, q_{-m}(t)=\bar{q}_m(t)$, $m,n\geq
0$.

We will use the following notations: given a function $v(t)$, let
$\sigma_\alpha(t)=\langle t\rangle ^{-1-\alpha}+\langle
t\rangle^{-\alpha}|v(t)|$ where $\langle t\rangle=(1+t^2)^{1/2}$ and
$\alpha\geq 0$ is to be specified later.
\begin{theorem}
Assume that ${q}_n(t)=v(t)(\delta_{-1}+\delta_{1})$ and
$|v(t)|\lesssim t^{-\gamma}, \gamma>3/4$. Then, for a.e. $k$, we
have
\[
\sup_{t>0} \sum_{n\geq 0} n^{s} |x(t,n,k)|^2<\infty, \quad \forall
s\in \mathbb{N}
\]
Fix any $(a,b)$ not containing $0$. Then, for any $s\geq 1$
\begin{equation}
\left\|\sup_{t\leq T} \sum_{n=1}^\infty
n^s|x_n(t,k)|^2\right\|_{L^{2/s}(a,b)}\lesssim C_1^s(T)C_2^s(T)+1
\label{wek}
\end{equation}
Here,
\[
C_1(T)=\left(\int_0^T \langle
\tau\rangle^{2\alpha}v^2(\tau)d\tau\right)^{1/2},\quad C_2(T)=
\int_0^T \sigma_\alpha(\tau) d\tau, \quad 1-\gamma<\alpha<\gamma-1/2
\]\label{short}
\end{theorem}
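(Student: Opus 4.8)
\emph{Plan of proof.} The starting point is the gauge transformation $w_n(t,k):=e^{-ikn^2t}x_n(t,k)$, which removes the free evolution and turns (\ref{model3}) into the tridiagonal system
\[
w_n'(t,k)=iv(t)\Bigl(e^{-i(2n-1)k\tau}\big|_{\tau=t}\,w_{n-1}(t,k)+e^{i(2n+1)kt}w_{n+1}(t,k)\Bigr),\qquad w_n(0,k)=\delta_{n0},
\]
with the convention $w_{-1}\equiv 0$. For real $k$ the operator $k\Lambda+V$ is self-adjoint, so $X(t,k)$ is unitary, $\sum_{n\ge 0}|w_n(t,k)|^2=1$, and in particular $|w_n(t,k)|\le1$. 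Since $|x_n(t,k)|=|w_n(t,k)|$ for real $k$, it suffices to control $\Phi_s(t,k):=\sum_{n\ge1}\langle n\rangle^{s}|w_n(t,k)|^2\asymp\sum_{n\ge1}n^s|x_n(t,k)|^2$.

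The first step is an exact energy identity. Differentiating $\Phi_s$ and inserting the system, the contributions pair the ``up'' term of level $n$ with the ``down'' term of level $n+1$; writing $\langle n\rangle^s=\langle n+1\rangle^s-a_{n+1}^{(s)}$ with $a_m^{(s)}:=\langle m\rangle^s-\langle m-1\rangle^s\asymp\langle m\rangle^{s-1}$, the symmetric part $\propto\langle n+1\rangle^s\,\Re(\cdots)$ is real and is annihilated by the factor $i$ — here the real-valuedness of $v$ is essential, and this is the very cancellation responsible for $\Phi_0\equiv1$. What survives, after integrating in $t$, is an identity of the form
\[
\Phi_s(T,k)=-2\,\Im\Bigl(\,\sum_{m\ge1}a_m^{(s)}\int_0^T v(\tau)\,e^{-i(2m-1)k\tau}\,\overline{w_m(\tau,k)}\,w_{m-1}(\tau,k)\,d\tau\Bigr)
\]
(with the coefficient at the edge term $m=1$ modified by a harmless $O(1)$ factor accounting for $w_0$). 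This already explains the shape of (\ref{wek}): passing from level $s-1$ to level $s$ produces one extra power $m$ through $a_m^{(s)}$, which must be paid for by the oscillation $e^{-i(2m-1)k\tau}$. Since for each $T'\le T$ the same identity holds with $T$ replaced by $T'$, one gets $\sup_{T'\le T}\Phi_s(T',k)\le 2\sum_{m}a_m^{(s)}\,\sup_{T'\le T}\bigl|\int_0^{T'}v(\tau)e^{-i(2m-1)k\tau}\overline{w_m}\,w_{m-1}\,d\tau\bigr|$.

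The heart of the matter is then to estimate this, uniformly in $T$, in $L^{2/s}(a,b)$. Because only $|v(t)|\lesssim t^{-\gamma}$ is assumed — with no control on $v'$ — one cannot integrate by parts in $\tau$ directly; the oscillation has to be extracted in the $k$-averaged sense. The tool is the Carleson--Hunt maximal bound for oscillatory integrals (the same estimate used in Section~2, \cite{Carl}): after rescaling $k\mapsto(2m-1)k$ it gives, for a $k$-independent amplitude $F(\tau)=\langle\tau\rangle^{\alpha}v(\tau)\in L^2$, the bound $\|\sup_{T'\le T}|\int_0^{T'}F(\tau)e^{-i(2m-1)k\tau}d\tau|\|_{L^2(dk)}\lesssim(2m-1)^{-1/2}C_1(T)$; the discrepancy $\langle\tau\rangle^{-\alpha}$, together with the $k$-dependent amplitude $\overline{w_m}w_{m-1}$, is absorbed against the decay in $m$ of $w_{m-1}$ coming from the lower levels $\Phi_{s'}$, $s'<s$, at the cost of the $L^1$-type weight $C_2(T)=\int_0^T\sigma_\alpha(\tau)\,d\tau$. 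Summing over $m$ is done with the quasi-triangle inequality $\|\sum_m f_m\|_p^p\le\sum_m\|f_m\|_p^p$ valid for $p=2/s\le1$ (and the ordinary triangle inequality when $s=1$), and the resulting recursion, schematically
\[
\bigl\|\sup_{T'\le T}\Phi_s\bigr\|_{L^{2/s}(a,b)}\lesssim C_1(T)C_2(T)\,\bigl\|\sup_{T'\le T}\Phi_{s-1}\bigr\|_{L^{2/(s-1)}(a,b)}+(\text{lower-order}),
\]
iterated $s$ times down to $\Phi_0\equiv1$, yields $(C_1(T)C_2(T))^s+1$, i.e. (\ref{wek}). The constraints $\gamma>3/4$ and $1-\gamma<\alpha<\gamma-1/2$ are precisely what keeps this bookkeeping consistent: $\alpha<\gamma-1/2$ forces $C_1(\infty)<\infty$ and $\alpha>1-\gamma$ forces $C_2(\infty)<\infty$. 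Finally, letting $T\to\infty$ in (\ref{wek}) and using $C_{1,2}(\infty)<\infty$, the left side is bounded uniformly in $T$; hence $\sup_{t>0}\sum_{n\ge1}n^s|x_n(t,k)|^2<\infty$ for a.e.\ $k\in(a,b)$, and a countable union over intervals $(a,b)$ avoiding $0$ and over $s\in\mathbb{N}$ gives the pointwise a.e.\ statement for all $s$ at once.

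The step I expect to be the genuine obstacle is closing the recursion in Step~3: the coefficient $a_m^{(s)}\asymp m^{s-1}$ grows, the oscillatory (Carleson) gain is only $m^{-1/2}$, so the shortfall must be recovered from the additional $m$-decay encoded in $\Phi_{s'}$, $s'<s$, and from a careful allocation of the time-weights $\langle\tau\rangle^{\pm\alpha}$ between the two H\"older factors — all while keeping every constant independent of $m$ and of $T$. Matching the powers of $m$ and the time-weights at every level, in the presence of the non-differentiable $v$ (which rules out pointwise integration by parts and is the reason one is forced onto the $k$-averaged estimates), is where the argument is delicate; everything else is a bootstrap built on the unitarity identity $\Phi_0\equiv1$.
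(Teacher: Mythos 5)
Your skeleton matches the paper's: the Abel--summed energy identity you write down (with $a_m^{(s)}=\langle m\rangle^s-\langle m-1\rangle^s$) is exactly the weighted sum over $N$ of the paper's tail identity $S_N(T)=-2\Im\int_0^T v\,x_{N-1}\bar x_N\,dt$, the role of unitarity ($\Phi_0\equiv1$), of the Carleson--Hunt maximal function, of $\sigma_\alpha$, $C_1$, $C_2$, and of the induction on $s$ are all correctly identified. But the step you yourself flag as ``the genuine obstacle'' is left unexecuted, and the summation scheme you propose for it does not close. Concretely: you want to apply the maximal Carleson bound to $F(\tau)=\langle\tau\rangle^\alpha v(\tau)$ with a per-term gain $(2m-1)^{-1/2}$ and then sum the $L^{2/s}$ norms over $m$ with the (quasi-)triangle inequality. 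The amplitude bookkeeping then gives, at level $s$, a term of size $a_m^{(s)}\cdot m^{-1/2}\cdot|w_{m-1}w_m|\lesssim m^{s-1}\cdot m^{-1/2}\cdot m^{-(s-1)}\,\Phi_{s-1}= m^{-1/2}\Phi_{s-1}$, and $\sum_m m^{-1/2}$ diverges; moreover the decay of $|w_{m-1}w_m|$ you want to borrow from $\Phi_{s-1}$ is only available in $L^{2/(s-1)}(dk)$, not pointwise, so it cannot be inserted term by term before summing norms. Your remark that one is ``forced onto $k$-averaged estimates'' in place of integration by parts is also not quite right: the resolution is a pointwise-in-$k$ integration by parts.

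What the paper actually does at this point is the following. Set $v_N(t,k)=-\int_t^T\langle\tau\rangle^\alpha v(\tau)e^{-ik(2N-1)\tau}d\tau$, write the integrand of the flux identity as $\partial_t v_N(t,k)\cdot\langle t\rangle^{-\alpha}\psi_{N-1}\bar\psi_N$, and integrate by parts in $t$ (the boundary terms vanish for $N>2$). The derivative falls on $\langle t\rangle^{-\alpha}\psi_{N-1}\bar\psi_N$ and, via the ODE, is bounded by $\sigma_\alpha(t)$ times the four products $|x_{N-2}x_N|,|x_N|^2,|x_{N-1}|^2,|x_{N-1}x_{N+1}|$ --- this is where $\sigma_\alpha$ and hence $C_2$ come from. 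Then $\sup_t|v_N(t,k)|\le M(k(2N-1))$ with $M$ the Carleson--Hunt maximal function of $\langle t\rangle^\alpha v\in L^2$ (this uses $\alpha<\gamma-1/2$), and --- this is the key point your scheme misses --- all $N$ are dominated \emph{simultaneously} by the single square function $\mu(k)=\bigl(\sum_n M(kn)^2\bigr)^{1/2}$, which a dyadic-block computation places in $L^2(a,b)$ with $\int_a^b\mu^2\lesssim C_1^2(T)$. With the oscillatory factor replaced by the $N$-independent $\mu(k)$, the sum over $N$ of $a_N^{(s)}$ times the amplitude products is bounded by Cauchy--Schwarz by $\sup_{t\le T}\Phi_{s-1}(t,k)$, and the induction closes \emph{pointwise in $k$}: $\sup_{t\le T}\Phi_s\lesssim C_2^s(T)\mu^s(k)+1$. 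Only then does one integrate, using $\mu^s\in L^{2/s}(a,b)$, to get (\ref{wek}); the a.e.\ statement follows from $\mu(k)<\infty$ a.e.\ and $C_2(\infty)<\infty$ (which uses $\alpha>1-\gamma$), as in your last paragraph. So the missing ingredient is precisely the pair (integration by parts against the partial oscillatory integral) $+$ (the $\ell^2_N\subset\ell^\infty_N$ domination by $\mu$), which converts your divergent term-by-term summation into a convergent pointwise bound.
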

\begin{proof}
We have
\[
x_n'(t,k)=iv(t)x_{n-1}(t,k)+ikn^2 x_n(t,k)+iv(t)x_{n+1}(t,k),\quad
n>0
\]
and
\[
x_0'(t,k)=iv(t)x_1(t,k), \quad \quad x_n(0,k)=\delta_0
\]
Thus, we have
\[
S_N(T)=\sum\limits_{n=N}^\infty |x_n(T,k)|^2=-2\Im
\left[\int\limits_0^T v(t)x_{N-1}(t,k)\bar{x}_N(t,k)dt\right], N>1
\]
Writing $x_n=\exp(ikn^2t)\psi_n$, we have
\[
\psi'_n=iv(t)\exp(-ikn^2t)(x_{n-1}+x_{n+1})
\]
and so
\[
S_N(T)\lesssim \left| \int_0^T v'_N(t,k)\langle t\rangle
^{-\alpha}\psi_{N-1}\bar{\psi}_Ndt\right|
\]
where
\[
v_N(t,k)=-\int_t^T \langle \tau\rangle ^\alpha
v(\tau)\exp(-ik(2N-1)\tau)d\tau
\]
Taking $N>2$ and integrating by parts,
\begin{eqnarray}
S_N(T) \lesssim \int\limits_0^T |v_N(t,k)|\cdot (\langle t\rangle
^{-1-\alpha}+\langle t\rangle ^{-\alpha}|v(t)|)\cdot\quad\quad\quad\quad\quad\nonumber\\
(|x_{N-2}x_N|+|x_N|^2+|x_{N-1}|^2+|x_{N-1}x_{N+1}|)dt \label{core}
\end{eqnarray}
Notice that for any $t$, we have
\[
|v_N(t,k)|\lesssim M(k(2N-1))
\]
where $M(k)$ is Carleson-Hunt maximal function for $\langle
t\rangle^\alpha v(t)$ and $M(k)\in L^2(\mathbb{R})$. Let
\[
\mu(k)=\left(\sum_{n=1}^\infty |M(kn)|^2\right)^{1/2}
\]

 For $0<a<b$, we have
\begin{eqnarray*} \int_a^b \mu^2(k)dk\lesssim \sum_{m=1}^\infty
\sum_{n=2^m}^{2^{m+1}} n^{-1} \int_{\alpha n}^{\beta
n}|M(\xi)|^2d\xi  \lesssim \sum_{m=1}^\infty \int_{\alpha
2^m}^{\beta 2^{m+1}}|M(\xi)|^2d\xi \\\lesssim \|M\|^2\lesssim
C_1^2(T)
\end{eqnarray*}
Thus, by Fubini, we have $M(kn)\in \ell^2(\mathbb{Z}^+)\subset
\ell^\infty (\mathbb{Z}^+)$ for a.e. $k$.
\begin{equation} S_N(T)
\leq \mu(k)\int\limits_0^T (\langle t\rangle ^{-1-\alpha}+\langle
t\rangle^{-\alpha}|v(t)|)
(|x_{N-2}x_N|+|x_N|^2+|x_{N-1}|^2+|x_{N-1}x_{N+1}|)dt \label{ite}
\end{equation}
Sum these inequalities over $N$ using  $\|x(t,k)\|_2=1$ for any $t$
\[
\sup_{t\leq T} \sum_{n=1}^\infty n|x_n(t,k)|^2\lesssim C_2(T)
\mu(k)+1
\]
By induction,
\begin{equation}
\sup_{t\leq T} \sum_{n=1}^\infty n^s|x_n(t,k)|^2\lesssim
C_2^s(T)\mu^s(k)+1 \label{sobn}
\end{equation}
for any $s$. Thus, there is a full measure set such that
\[
\sup_{t>0} \sum_{n=1}^\infty n^s|x_n(t,k)|^2<\infty
\]
for any $s$. Integration of (\ref{sobn}) gives (\ref{wek}).
\end{proof}
 We also can improve this result to get real analyticity for a.e. $k$.

\begin{proposition}
Under the conditions of the theorem \ref{short}, there is a full
measure set in $k$ for which the solution is real analytic.
\end{proposition}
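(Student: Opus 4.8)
The plan is to upgrade the polynomial bounds $\sup_{t>0}\sum_{n\geq 0}n^s|x_n(t,k)|^2<\infty$ of Theorem \ref{short} to exponential ones, i.e.\ to produce a full measure set of $k$ for which there is $c(k)>0$ with $\sum_{n\geq 0}e^{cn}|x_n(t,k)|^2$ bounded uniformly in $t$; real analyticity of $\psi(t,\theta,k)=\sum_n x_n(t,k)e^{in\theta}$ in a strip around $\mathbb{R}$ follows immediately. The key is to track the $s$-dependence of the constants in the induction step (\ref{sobn}) of Theorem \ref{short}. First I would re-examine the recursion $S_N(T)\lesssim \mu(k)\int_0^T\sigma_\alpha(t)(|x_{N-2}x_N|+|x_N|^2+|x_{N-1}|^2+|x_{N-1}x_{N+1}|)\,dt$ and carry out the induction on $s$ keeping combinatorial factors: multiplying (\ref{ite}) by $N^{s}$, using $N^s\lesssim (N-2)^s + \binom{s}{1}(N-2)^{s-1}\cdot 2 + \cdots$ (or more crudely $N^s \le 2^s (N-2)^s$ for $N$ large and handling finitely many small $N$ separately), and summing over $N$, one finds
\[
\sup_{t\leq T}\sum_{n\geq 1} n^s|x_n(t,k)|^2 \;\lesssim\; (A\,C_2(T)\mu(k))^s + 1
\]
for an absolute constant $A$ independent of $s$ — the point being that the factor picked up at each induction step is bounded by $A\,C_2(T)\mu(k)$ with $A$ not growing with $s$.

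Granting such a bound, for a.e.\ $k$ (those in the full measure set of Theorem \ref{short}, on which $\mu(k)<\infty$) set $c=c(k)=\tfrac{1}{2A C_2(\infty)\mu(k)}$ when $C_2(\infty)=\int_0^\infty\sigma_\alpha<\infty$; note $C_2(\infty)<\infty$ precisely because $\gamma>3/4>1/2$ and $1-\gamma<\alpha<\gamma-1/2$ force $\alpha<1/2$, hence $\langle t\rangle^{-1-\alpha}\in L^1$, and $\langle t\rangle^{-\alpha}|v(t)|\lesssim\langle t\rangle^{-\alpha-\gamma}$ with $\alpha+\gamma>1$, so $\langle t\rangle^{-\alpha}|v|\in L^1$ as well. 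Then summing the exponential series term by term,
\[
\sum_{n\geq 0} e^{cn}|x_n(t,k)|^2 \;=\; \sum_{s\geq 0}\frac{c^s}{s!}\sum_{n\geq 0} n^s|x_n(t,k)|^2 \;\lesssim\; \sum_{s\geq 0}\frac{c^s}{s!}\big((A C_2\mu)^s+1\big) \;\le\; e^{cA C_2\mu}+e^{c} \;<\;\infty,
\]
uniformly in $t$ and $T$, by the choice $cAC_2\mu=1/2$. This is exactly summability of $|x_n(t,k)|^2$ against a geometric weight, so for each such $k$ the Fourier series of $\psi(t,\cdot,k)$ extends holomorphically to the strip $|\Im\theta|<c(k)/2$, with bounds uniform in $t$; in particular $\psi(t,\theta,k)$, and hence $u(t,\theta,k)$, is real analytic in $\theta$.

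The main obstacle is the bookkeeping in the induction: one must verify that the binomial/combinatorial losses incurred when passing from the bound for $\sum n^{s-1}|x_n|^2$ to the bound for $\sum n^{s}|x_n|^2$ do not grow with $s$ — equivalently, that the shift by a bounded number of indices ($N-2,\dots,N+1$ versus $N$) costs only a factor $O(1)$ per step rather than $O(s)$. This is genuine but standard: one splits the sum over $N$ into a ``small $N$'' part (finitely many terms, absorbed into the additive $1$ using $\|x(t,k)\|_2=1$) and a ``large $N$'' part where $N^s\le (1+4/N)^s (N-4)^s \le e^{4s/N}(N-4)^s$ is controlled once $N\gtrsim s$, while the range $N\lesssim s$ is again handled by the trivial $\ell^2$ bound times $s^s$, which is dominated after multiplying by $c^s/s!$ and using Stirling. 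I would organize the estimate so that the constant $A$ is manifestly $s$-independent, e.g.\ by proving the cleaner intermediate claim $\sup_{t\le T}\sum_n n^s|x_n(t,k)|^2 \le (s!)\,B^s$ for a suitable $B=B(k,T)$ and then summing the series, which sidesteps the sharp constant entirely and still yields analyticity in some strip.
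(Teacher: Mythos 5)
Your proposal is correct in substance and runs on the same engine as the paper's proof: iterate the tail recursion (\ref{ite}) with polynomial weights of growing degree and track how the constants depend on the degree. The packaging differs at the end. The paper multiplies (\ref{ite}) by the \emph{shifted} weight $(N-(2l-1))^{l-1}$ and keeps the time integrals nested, so that after $l$ steps the right-hand side is $(C\mu(k))^l\int_0^T\sigma_\alpha(t_1)\int_0^{t_1}\sigma_\alpha(t_2)\cdots dt_l=(C\mu(k))^l\|\sigma_\alpha\|_1^l/l!$ up to the stated normalization; evaluating the weight at $N\sim 4l$ then gives the pointwise bound $\sup_t|x_N(t,k)|^2\leq\bigl(C\mu(k)\|\sigma_\alpha\|_1/l\bigr)^l$, i.e.\ faster-than-exponential decay of the Fourier coefficients, from which real analyticity is immediate. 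You instead aim for $\sum_n e^{cn}|x_n|^2<\infty$ by summing $\sum_s \frac{c^s}{s!}\sum_n n^s|x_n|^2$, which forces you to control the $s$-dependence of the implicit constant in (\ref{sobn}). Your primary claim (a bound $(A C_2\mu)^s+1$ with $A$ independent of $s$ and no factorial) is exactly what the paper's two devices deliver for free: the shifted weights remove the $O(1)$-per-step index bookkeeping you worry about, and the nested time integrals supply the $1/l!$ that your fallback bound $s!\,B^s$ would otherwise need to be cancelled against the $1/s!$ of the exponential series. Your fallback route ($s!\,B^s$ plus summing the series) does close the argument and yields analyticity in a strip of width $\sim 1/(\mu(k)C_2)$, which suffices for the proposition, but it is strictly weaker than the paper's conclusion. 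One small slip: the integrability of $\langle t\rangle^{-1-\alpha}$ follows from $\alpha>0$ (guaranteed by $\alpha>1-\gamma$ when $\gamma<1$, or by choosing $\alpha>0$ otherwise), not from $\alpha<1/2$ as you wrote; this does not affect the argument.
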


\begin{proof}
We will work on the Fourier side.  Summing (\ref{ite}) from $N=2$ to
$\infty$
\begin{equation}
\sum_{N=2}^\infty |x_N(T,k)|^2(N-1)\leq C\mu(k)\int\limits_0^T
\sigma_\alpha(t)dt \label{it1}
\end{equation}
Multiply (\ref{ite}) by $N-3$ and sum from $N=4$ to $\infty$.
(\ref{it1}) gives
\[
\sum_{N=4}^\infty |x_N(T,k)|^2(N-3)^2\leq C^2\cdot
2\mu^2(k)\int\limits_0^T \sigma_\alpha(t_1)\int\limits_0^{t_1}
\sigma_\alpha(t_2)dt_2dt_1
\]
By induction
\[
\sum_{N=2l}^\infty |x_N(T,k)|^2(N-(2l-1))^l\leq (C\mu(k))^l
\left(\int_0^T \sigma_\alpha(t)dt\right)^l
\]
Taking, say,  $N\sim 4l$, we have
\[
\sup_{t\geq 0} |x_{N}(t,k)|^2\leq
\left(\frac{C\mu(k)\|\sigma_\alpha\|_1}{l}\right)^l
\]
which shows that the solution is real analytic for a.e. $k$.
\end{proof}

In theorem \ref{short}, the integration is restricted to an interval
$(a,b)$ which must be finite, not containing $0$. Below we show that
this condition can be dropped.

\begin{theorem} Under the conditions of theorem \ref{short}, we have
\begin{equation}
\sup_{t>0} \sum_{n=1}^\infty
n^2|x_n(t,k)|^2 \in L^1_{\rm loc}(\mathbb{R}) \label{pust}
\end{equation}

\end{theorem}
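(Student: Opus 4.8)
The plan is to isolate the neighbourhood of $k=0$ --- the only place where the argument of Theorem~\ref{short} breaks down --- and to run there a weighted, $L^{1}$-in-$k$ version of the estimate behind \eqref{ite}. Away from the origin nothing new is needed: for any interval $(a,b)$ with $0\notin[a,b]$, estimate \eqref{wek} with $s=2$ reads $\|\sup_{t\le T}\sum_{n}n^{2}|x_{n}(t,k)|^{2}\|_{L^{1}(a,b)}\lesssim C_{1}^{2}(T)C_{2}^{2}(T)+1$, and since $1-\gamma<\alpha<\gamma-1/2$ forces $C_{1}(\infty),C_{2}(\infty)<\infty$, letting $T\to\infty$ (monotone convergence) gives \eqref{pust} on $(a,b)$. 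Hence it remains to bound $\int_{-\delta}^{\delta}\sup_{t>0}\sum_{n}n^{2}|x_{n}(t,k)|^{2}\,dk$ for one fixed small $\delta>0$; by symmetry I will treat only $(0,\delta)$. The obstruction there is that \eqref{sobn} with $s=2$ only yields $\sup_{t>0}\sum_{n}n^{2}|x_{n}|^{2}\lesssim(C_{2}(\infty)\mu(k))^{2}+1$, and $\mu(k)^{2}\sim|k|^{-1}$ near the origin is not locally integrable.

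The first ingredient I would record is a crude, $k$-\emph{independent} a priori bound for \eqref{model3}. Differentiating $\|u(t,k)\|_{\dot H^{s}}^{2}$ along \eqref{model3} and using that the $\Lambda$-term is skew, one gets $\frac{d}{dt}\|u\|_{\dot H^{s}}\lesssim s\,|v(t)|\,\|u\|_{\dot H^{s-1}}$, hence by induction $\|u(t,k)\|_{\dot H^{s}}\lesssim_{s}V(t)^{s}$ with $V(t)=\int_{0}^{t}|v(\tau)|\,d\tau\lesssim\langle t\rangle^{1-\gamma}$ (a logarithm if $\gamma=1$). In particular $|x_{m}(t,k)|^{2}\lesssim_{s}(V(t)/m)^{2s}$ for every $m,s$, uniformly in $k$: up to a rapidly decaying tail the solution lives on the first $O(V(t))$ modes at time $t$.

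For the main estimate I would start from $\frac{d}{dt}\sum_{n}n^{2}|x_{n}|^{2}=-2v(t)\sum_{n}(2n-1)\Im(\bar x_{n}x_{n-1})$, write $x_{n}=e^{ikn^{2}t}\psi_{n}$, and integrate by parts in $t$ (boundary terms drop because $\psi_{n-1}(0)=0$ for $n\ge2$), which, exactly as in the derivation of \eqref{ite}, gives
\[
\sup_{t>0}\sum_{n}n^{2}|x_{n}(t,k)|^{2}\ \lesssim\ 1+\int_{0}^{\infty}\sigma_{\alpha}(t)\sum_{m}m\,M\bigl(k(2m-1)\bigr)\,|x_{m}(t,k)|^{2}\,dt ,
\]
where $M\in L^{2}(\mathbb{R})$ is the Carleson--Hunt maximal function of $\langle\tau\rangle^{\alpha}v$, $\|M\|_{2}\lesssim C_{1}(\infty)$. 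Fix a small $\theta>0$. Integrating over $k\in(0,\delta)$ and applying Cauchy--Schwarz, first in $m$ through the splitting $m=m^{\theta}\cdot m^{1-\theta}$ and then in $k$, the inner $k$-integral is at most
\[
\Bigl(\int_{0}^{\delta}\sum_{m}m^{2\theta}M^{2}(k(2m-1))|x_{m}(t,k)|^{2}\,dk\Bigr)^{1/2}\Bigl(\int_{0}^{\delta}\sum_{m}m^{2-2\theta}|x_{m}(t,k)|^{2}\,dk\Bigr)^{1/2}.
\]
For the first factor, $\sup_{k}|x_{m}(t,k)|^{2}\lesssim_{s}\min(1,(V(t)/m)^{2s})$ together with $\int_{0}^{\delta}M^{2}(k(2m-1))\,dk\le\|M\|_{2}^{2}/(2m-1)$ gives a bound $\lesssim_{\theta}\|M\|_{2}^{2}\,V(t)^{2\theta}$ (the modes $m\lesssim V(t)$ dominate). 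For the second factor, interpolating $\dot H^{1-\theta}$ between $\dot H^{1}$ and $L^{2}$ and invoking \eqref{sobn} with $s=2$ gives $\sum_{m}m^{2-2\theta}|x_{m}|^{2}\lesssim(C_{2}(\infty)\mu(k))^{2-2\theta}+1$; and the dilation/Carleson $L^{2}$ estimate already used in the proof of Theorem~\ref{short} yields $\int_{0}^{\delta}\mu(k)^{2-2\theta}\,dk\lesssim_{\theta}\delta^{\theta}\|M\|_{2}^{2-2\theta}$ (here one must stay strictly below the $L^{2}$ threshold for $\mu$). Combining these, $\int_{0}^{\delta}\sum_{m}mM(k(2m-1))|x_{m}(t,k)|^{2}\,dk\lesssim_{\theta}V(t)^{\theta}$, hence
\[
\int_{0}^{\delta}\sup_{t>0}\sum_{n}n^{2}|x_{n}(t,k)|^{2}\,dk\ \lesssim_{\theta}\ \delta+\int_{0}^{\infty}\sigma_{\alpha}(t)\,V(t)^{\theta}\,dt .
\]

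To finish, since $V(t)\lesssim\langle t\rangle^{1-\gamma}$ and $\sigma_{\alpha}(t)=\langle t\rangle^{-1-\alpha}+\langle t\rangle^{-\alpha}|v(t)|\lesssim\langle t\rangle^{-\alpha-\gamma}$ for large $t$, the last integral converges as soon as $\alpha+\gamma-\theta(1-\gamma)>1$, which holds for $\theta>0$ small because $\alpha+\gamma>1$; together with the first paragraph this proves \eqref{pust}. I expect the last step to be the crux: the width $V(t)$ of the support of $u(t)$ grows in $t$, the natural bound $\mu(k)^{2}$ coming from \eqref{sobn} fails to be locally integrable at $k=0$, and the whole point is to trade a full power of $V(t)$ for the innocuous factor $V(t)^{\theta}$ (at the price of a constant $\sim\theta^{-1/2}$), so that the time integral converges under the single hypothesis $\alpha+\gamma>1$ of Theorem~\ref{short} rather than under the much stronger $\alpha+2\gamma>2$ that a cruder $k$-integration would force.
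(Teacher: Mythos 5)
Your proof is correct in substance, but it follows a genuinely different route from the paper's. The paper handles all of $\mathbb{R}$ in one stroke: since $\alpha<\gamma-1/2$ forces $\langle t\rangle^{\alpha}v(t)\in L^{\nu}(\mathbb{R}^+)$ for some $\nu<2$, the Carleson--Hunt maximal function satisfies $M\in L^{\zeta}(\mathbb{R})$ with $\zeta=\nu'>2$; Young's inequality $ab\le a^{\zeta}/\zeta+b^{\nu}/\nu$ (with the weight split $n^{-\epsilon}\cdot n^{1+\epsilon}$, $\epsilon=(2-\nu)/\nu$) then decouples the maximal function from the solution, producing a majorant $A(k)=C_2(T)\sum_{n}n^{-\zeta\epsilon}|M((2n-1)k)|^{\zeta}$ that is in $L^{1}(\mathbb{R})$ \emph{globally} --- the point being that $\zeta>2$ makes the dilation sum $\sum_n n^{-1-\zeta\epsilon}$ converge, which is exactly what kills the $k=0$ singularity --- and Gronwall in $T$ closes the loop. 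You instead isolate the origin, use only the $L^{2}$ Carleson estimate, and compensate for the non-integrability of $\mu^{2}(k)\sim|k|^{-1}$ by the $k$-uniform localization $|x_m(t,k)|\lesssim_{s}(V(t)/m)^{s}$, trading a Cauchy--Schwarz split $m=m^{\theta}m^{1-\theta}$ for the factor $V(t)^{\theta}$, which the hypothesis $\alpha+\gamma>1$ then absorbs in the time integral. Each approach buys something: the paper's is shorter, avoids the near/far decomposition, yields an explicit $L^1(\mathbb{R})$ majorant uniform over unit intervals, and iterates immediately to higher Sobolev norms, but it invokes the $L^{p}$, $p<2$, (Hausdorff--Young) form of Carleson--Hunt; yours uses only the $L^{2}$ theorem plus an elementary a priori bound and makes the mechanism near $k=0$ (support of width $V(t)$ versus the $|k|^{-1/2}$ singularity of $\mu$) transparent. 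One step in your write-up is asserted rather than proved: $\int_{0}^{\delta}\mu^{2-2\theta}\,dk\lesssim_{\theta}\delta^{\theta}\|M\|_{2}^{2-2\theta}$ does not follow directly from the estimate $\int_{a}^{b}\mu^{2}\,dk\lesssim\|M\|_{2}^{2}$ used in Theorem \ref{short}, since that estimate degenerates as $a\to0$; it does follow, however, by decomposing $(0,\delta)$ into dyadic blocks $(2^{-j-1}\delta,2^{-j}\delta)$, applying H\"older on each block (where the $L^{2}$ bound holds with a constant depending only on the ratio of endpoints), and summing the resulting geometric series $\sum_{j}(2^{-j}\delta)^{\theta}$. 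With that line added, your argument is complete.
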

\begin{proof}
Notice that the function $\langle t\rangle^\alpha v(t)\in
L^\nu(\mathbb{R}^+)$ for some $\nu(\gamma)<2$ and therefore $M(k)\in
L^\zeta(\mathbb{R})$ with $\zeta$ dual to $\nu$. Multiply
(\ref{core}) by $N$ and sum from $N=2$ to infinity. We have
\[
I(T,k)=\sum_{n=1}^\infty
n^2|x_n(T,k)|^2\lesssim 1+ \quad\quad\quad
\] \[ \int\limits_0^T \sigma_\alpha(t)
 \int\limits_\mathbb{R} \sum_{n\geq 2}
n^{-\epsilon}|M((2n-1)k)|\cdot
n^{1+\epsilon}\left( |x_{n-2}(t,k)x_n(t,k)|+|x_n(t,k)|^2\right.
\]
\[
\left. +|x_{n-1}(t,k)|^2+|x_{n-1}(t,k)x_{n+1}(t,k)|\right)dt
\]
where $\epsilon>0$. By Young's inequality, we have
\[
I(T,k)\lesssim 1+ \int\limits_0^T \sigma_\alpha(t)
 \sum_{n\geq 2} \left(
\frac{n^{-\zeta\epsilon}|M((2n-1)k)|^\zeta}{\zeta} + \frac{
n^{\nu(1+\epsilon)}|x_{n-2}(t,k)|^{2\nu}}{\nu}\right) dt
\]
 Taking $\epsilon=(2-\nu)/\nu $, we get
\[
I(T,k)\lesssim  1+A(k)+\int_0^T \sigma_\alpha(t)I(t,k)dt
\]
where
\[
A(k)= \left(\int\limits_0^T \sigma_\alpha(t)dt\right) \cdot
 \left( \sum_{n\geq 2}
n^{-\zeta\epsilon}|M((2n-1)k)|^\zeta \right)\in L^1(\mathbb{R})
\]
The Gronwall lemma yields
\[
I(T,k)\lesssim (1+A(k))\exp\left( C_2(T)\right)
\]
which implies (\ref{pust}).
\end{proof}
The similar argument can handle the higher Sobolev
norms.\bigskip\bigskip

The next theorem  studies the $L^p(\mathbb{R}, dk)$ norms of
\[
S_N(T,k)=\sum_{n=N}^\infty |x_n(T,k)|^2
\]
\begin{theorem}
 Assume that conditions of the theorem \ref{short} hold. Then, for any \mbox{$
2\leq p\leq \infty,$}  $N>1$, we have
\begin{equation}
\|S_N(T,k)\|_p \lesssim N^{-2+2p^{-1}}\left(\int\limits_0^T |v(t)|
dt\right)^{2-2/p} \left(\int\limits_0^T v^2(\tau)d\tau\right)^{1/p}
\end{equation}
\end{theorem}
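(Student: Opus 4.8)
The plan is to obtain the bound by interpolating a pointwise ($L^\infty$ in $k$) estimate against an $L^1$ in $k$ estimate for $S_N(T,\cdot)$, both of which follow from tools already at hand. Throughout, $v$ is real (since $q_{1}=q_{-1}=v$ and $q_{-m}=\bar q_m$), $X(t,k)$ is unitary for real $k$, and $\|x(t,k)\|_{\ell^2}=1$.

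I begin with a flux identity. From $x_0'=ivx_1$ and $x_n'=iv(x_{n-1}+x_{n+1})+ikn^2x_n$ ($n\geq1$) one gets $\tfrac{d}{dt}|x_n|^2=-2\Im\bigl(v\bar x_n(x_{n-1}+x_{n+1})\bigr)$, and telescoping the purely real interior cross terms yields, for every $N\geq1$,
\[
\frac{d}{dt}S_N(t,k)=-2\Im\bigl(v(t)\,x_{N-1}(t,k)\,\overline{x_N(t,k)}\bigr),\qquad S_N(0,k)=0 .
\]
Summing this from $N=M+1$ to $\infty$ (for $1\leq M<N$), integrating in $t$, and using $\|x(t,k)\|_{\ell^2}=1$ together with Cauchy--Schwarz gives the key inequality
\[
(N-M)\,S_N(t,k)\ \leq\ \sum_{n\geq M+1}(n-M)|x_n(t,k)|^2\ \leq\ 2\int_0^t|v(\tau)|\,S_M(\tau,k)\,d\tau ,\qquad 1\leq M<N .
\]
In particular, with $M=1$ and $S_1(\tau,k)=1-|x_0(\tau,k)|^2\leq\int_0^\tau|v|$ (the $N=1$ case of the identity), this already gives $S_N(t,k)\lesssim N^{-1}\int_0^t|v|$.

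To extract the second factor $\int|v|$ and the gain $N^{-2}$ \emph{uniformly in $k$}, I apply the key inequality twice. Fix $N\geq2$, set $M=\lceil N/2\rceil$, and if $M\geq2$ set $M'=\lceil M/2\rceil$ (if $M=1$, i.e. $N=2$, use $S_1\leq\int_0^\tau|v|$ instead). Then $N-M\gtrsim N$ and $M-M'\gtrsim N$; bounding $S_{M'}\leq1$ in the inner application gives $S_M(\tau,k)\lesssim N^{-1}\int_0^\tau|v|$, and feeding this into the outer one,
\[
\sup_{k\in\mathbb{R}}S_N(T,k)\ \lesssim\ \frac1N\int_0^T|v(\tau)|\Bigl(\frac1N\int_0^\tau|v(s)|\,ds\Bigr)d\tau\ \lesssim\ N^{-2}\Bigl(\int_0^T|v(t)|\,dt\Bigr)^2 .
\]
For the $L^1$ bound I invoke (\ref{h2}). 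Applied to the truncations $\Pi_nQ\Pi_n$ (systems as in Section~3 with distinct eigenvalues $\lambda_m=m^2$ and zero diagonal) and passed to the limit by the Approximation lemma as in the proof of Theorem~\ref{apat}, it gives, for fixed time $T$ and the block splitting of $X(T,k)$ along $\ell^2\{0,\dots,N-1\}\oplus\ell^2\{N,N+1,\dots\}$, that $\int_{\mathbb{R}}{\rm tr}\,|Z(T,k)|^2\,dk\leq 2\pi I_{[0,T]}$ with $I_{[0,T]}=\sum_{m\leq N-1}\sum_{l\geq N}|\lambda_l-\lambda_m|^{-1}\int_0^T|Q_{ml}|^2\,d\tau$; the integral runs only over $[0,T]$ since the relevant $\Im k\to+\infty$ asymptotics (\ref{magic}) of $\det X_{N-1}(T,k)\,e^{-ikT(\lambda_0+\dots+\lambda_{N-1})}$ involves $\int_0^T|Q_{ml}|^2$. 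As $Q$ is tridiagonal, the only term is $m=N-1,\ l=N$ with gap $\lambda_N-\lambda_{N-1}=2N-1$, so $I_{[0,T]}=(2N-1)^{-1}\int_0^T v^2$; and $S_N(T,k)=\sum_{m\geq N}|x_{m0}(T,k)|^2\leq{\rm tr}\,|Z(T,k)|^2$. Hence $\int_{\mathbb{R}}S_N(T,k)\,dk\lesssim N^{-1}\int_0^T v^2(\tau)\,d\tau$.

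Finally, for $2\leq p\leq\infty$ the nonnegative function $S_N(T,\cdot)$ satisfies $\|S_N(T,k)\|_p\leq\|S_N(T,k)\|_\infty^{\,1-1/p}\,\|S_N(T,k)\|_1^{\,1/p}$, so
\[
\|S_N(T,k)\|_p\ \lesssim\ \Bigl(N^{-2}\bigl(\textstyle\int_0^T|v|\,dt\bigr)^2\Bigr)^{1-1/p}\Bigl(N^{-1}\textstyle\int_0^T v^2\,dt\Bigr)^{1/p}=N^{-2+1/p}\Bigl(\int_0^T|v|\,dt\Bigr)^{2-2/p}\Bigl(\int_0^T v^2\,dt\Bigr)^{1/p},
\]
which is bounded by $N^{-2+2/p}\bigl(\int_0^T|v|\,dt\bigr)^{2-2/p}\bigl(\int_0^T v^2\,dt\bigr)^{1/p}$ because $N>1$. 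The main obstacle is the pointwise ($L^\infty$ in $k$) estimate: a single use of the flux identity only yields $N^{-1}$ and one power of $\int|v|$, so the quadratic-in-$\int|v|$ control at rate $N^{-2}$ has to be squeezed out by iterating twice over three comparable index ranges; everything else (the flux identity and its telescoping, the appeal to (\ref{h2}), and the Hölder interpolation) is routine.
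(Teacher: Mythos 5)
Your argument is correct, and it reaches the stated bound (in fact a slightly stronger one) by a genuinely different route than the paper. The paper interpolates between $p=2$ and $p=\infty$: the $L^2$ endpoint comes from the oscillatory form of the flux identity, $S_m(T,k)\lesssim\int_0^T|v(t)|\,|\int_t^Tv(\tau)e^{i(2m-1)k\tau}d\tau|(\dots)dt$, summed over $m\in[N/2,N]$ and then estimated in $k$ by Minkowski and Plancherel, giving $\|S_N\|_2\lesssim N^{-1}\int_0^T|v|(\int_t^Tv^2)^{1/2}dt$; the $L^\infty$ endpoint is quoted from the Bernstein-type argument of Lemma \ref{prim1}. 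You instead interpolate between $p=1$ and $p=\infty$. Your $L^\infty$ endpoint is obtained by a clean double iteration of the telescoped flux inequality $(N-M)S_N\leq 2\int_0^t|v|\,S_M\,d\tau$ over three comparable index ranges -- a self-contained, purely $\ell^2$-weighted substitute for the Bernstein argument that yields the same $N^{-2}(\int|v|)^2$. Your $L^1$ endpoint imports the trace/determinant estimate (\ref{h2}) from Section 3, transferred to the infinite tridiagonal system by truncation and the Approximation lemma exactly as the paper does for (\ref{sss}); since the only off-diagonal coupling across the split at $N$ is the single entry with gap $2N-1$, this gives $\int_{\mathbb{R}}S_N\,dk\lesssim N^{-1}\int_0^Tv^2$, and you are right to observe that the relevant asymptotics (\ref{magic}) at time $T$ involves only $\int_0^T$. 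The trade-off: the paper's route stays entirely within the elementary Plancherel estimates of Section 5, while yours leans on the determinant machinery of Section 3 (and its extension to $N=\infty$); in exchange, H\"older between $L^1$ and $L^\infty$ produces the exponent $N^{-2+1/p}$, which improves on the stated $N^{-2+2/p}$ for all $p>1$ -- your final step of discarding this gain to match the statement is the only place where anything is lost.
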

\begin{proof}
We have
\begin{eqnarray*}
S_m(T,k)\lesssim \int\limits_0^T |v(t)|\left|\int\limits_t^T
v(\tau)e^{i(2m-1)k\tau}d\tau\right|(|x_{m-2}x_m|+|x_m|^2+
\hspace{2cm}
\\
+|x_{m-1}|^2+|x_{m-1}x_{m+1}|)dt
\end{eqnarray*}
Sum these inequalities in $m$ from $N/2$ to $N$. We get
\[
NS_N(T,k)\lesssim \int\limits_0^T |v(t)| \max\limits_{m=N/2, \ldots,
N}\left| \int\limits_t^T v(\tau)e^{i(2m-1)k\tau}d\tau\right|dt
\]
Taking the $L^2(\mathbb{R},dk)$ norm of both sides, we have by
Minkowski
\[
\|S_N\|_2\lesssim N^{-1}\int\limits_0^T |v(t)|
\left(\sum\limits_{m={N/2}}^N  \int\limits_\mathbb{R}
\left|\int\limits_t^T v(\tau)e^{i(2m-1)k\tau} d\tau \right|^2dk
\right)^{1/2}dt
\]
so
\begin{equation}
\|S_N\|_2\lesssim N^{-1}\int\limits_0^T |v(t)| \left(\int\limits_t^T
|v(\tau)|^{2}d\tau\right)^{1/{2}}dt, \label{inte1}
\end{equation}

The argument similar to the one employed in the proof of lemma
\ref{prim1} gives
\[
\sum_{n\geq 0} n^2|x_n|^2\lesssim \left(\int_0^T |v(t)dt\right)^2
\]
uniformly in $k$. Thus,
\begin{equation}
\|S_N\|_\infty \lesssim N^{-2}\left(\int_0^T|v(t)|dt\right)^2
\label{inte2}
\end{equation}
Interpolation between (\ref{inte1}) and (\ref{inte2}) gives the
statement of the theorem.
\end{proof}

Repeating the same arguments for the case when the eigenvalues $\{\lambda_j\}, j>0$ have multiplicity two, one has
\begin{theorem}
Let $\psi(t,\theta,k)$ be the solution to (\ref{tough}) and
$q(t,\theta)=\cos(\theta)q(t)$  where $|q(t)|\lesssim t^{-\gamma},
\,\gamma>3/4$. Then
\begin{itemize}
\item[1.]
For a.e. $k$ we have
\[
\sup_{t>0} \|\psi(t,\theta,k)\|_{H^s(\mathbb{T})}<\infty, \quad s\in \mathbb{Z}^+
\]
 and $\psi(t,\theta,k)$ is real analytic in $\theta$ for any $t$.
\item[2.] For any finite interval $(a,b)$ not containing zero,

\[
\sup_{t>0} \|\psi(t,\theta,k)\|^2_{H^{s/2}(\mathbb{T})}\in
L^{2/s}(a,b)
\]

\item[3.] If $S_N(T,k)=\|P_{|n|\geq N}\psi(T,\theta,k)\|^2$, then
\[
\|S_N(T,k)\|_p \lesssim N^{-2+2p^{-1}}\left(\int\limits_0^T |q(t)|
dt\right)^{2-2/p} \left(\int\limits_0^T q^2(\tau)d\tau\right)^{1/p},
\quad 2\leq p\leq \infty
\]
\end{itemize}
\end{theorem}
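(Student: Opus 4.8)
The strategy is to recognise that, on the Fourier side, this is the model (\ref{model3}) of Theorem~\ref{short}, merely with the index set $\mathbb{Z}$ in place of $\mathbb{Z}^{+}$, and then to read off the three assertions from Theorem~\ref{short}, from the real--analyticity Proposition following it, and from the $L^{p}(\mathbb{R},dk)$ bound for $S_N$ proved just above (the interpolation of (\ref{inte1}) against (\ref{inte2})); the one new point is a uniform--in--$T$ control of the time integrals occurring in those statements, which is exactly what the condition $\gamma>3/4$ provides.

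First I would record two reductions. The change of variable leading from (\ref{dt}) to (\ref{tough}) produces a \emph{real} potential $q(t,\theta)=\cos(\theta)q(t)$, so $iq(t,\theta)$ is anti--selfadjoint and, for real $k$, the solution operator of (\ref{tough}) is unitary on $L^{2}(\mathbb{T})$; in particular $\|\psi(t,\cdot,k)\|_{L^{2}(\mathbb{T})}$ is constant in $t$, which is the normalisation $\|x(t,k)\|_{2}=1$ used throughout the proof of Theorem~\ref{short}. On the Fourier side the equation is
\[
\hat\psi_{n}'(t,k)=-ikn^{2}\hat\psi_{n}(t,k)+\tfrac i2 q(t)\bigl(\hat\psi_{n-1}(t,k)+\hat\psi_{n+1}(t,k)\bigr),\qquad \hat\psi_{n}(0,k)=\delta_{n0},
\]
which is exactly the system (\ref{model3}) with $v=\tfrac12 q$, the index $n$ now running over $\mathbb{Z}$ and the spectrum $\lambda_{n}=n^{2}$ doubly degenerate for $n\neq0$. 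Since $q(t,-\theta)=q(t,\theta)$ and $\psi(0,\cdot)\equiv1$ is even, uniqueness forces $\psi(t,-\theta,k)=\psi(t,\theta,k)$, hence $\hat\psi_{-n}=\hat\psi_{n}$ and one may restrict to $n\ge0$; in any case the argument of Theorem~\ref{short} applies verbatim once $\sum_{n\ge N}$ is replaced by $\sum_{|n|\ge N}$ throughout (the telescoping identity, the passage to the Carleson--Hunt maximal function and the induction producing the weights $n^{s}$ all survive, at worst doubling constants). This is the multiplicity--two situation anticipated in Section~3 (the estimate for $\lambda_{2j}=\lambda_{2j+1}$, and Theorem~\ref{main3}), here trivialised by the reflection $\theta\mapsto-\theta$.

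With this identification the three parts follow. Since $\gamma>3/4$ the window $\max(0,1-\gamma)<\alpha<\gamma-\tfrac12$ is nonempty; fix such an $\alpha$. Then $\langle\tau\rangle^{2\alpha}q^{2}(\tau)\lesssim\langle\tau\rangle^{2\alpha-2\gamma}\in L^{1}(\mathbb{R}^{+})$ because $\alpha<\gamma-\tfrac12$, and $\sigma_{\alpha}(\tau)\lesssim\langle\tau\rangle^{-1-\alpha}+\langle\tau\rangle^{-\alpha-\gamma}\in L^{1}(\mathbb{R}^{+})$ because $\alpha>0$ and $\alpha>1-\gamma$; hence the quantities $C_{1}(T)$ and $C_{2}(T)$ of Theorem~\ref{short} are bounded uniformly in $T$. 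Part~1 is then the first assertion of Theorem~\ref{short} (applied with $s$ doubled, to get the $H^{s}$ weight $n^{2s}$, plus $|\hat\psi_{0}|^{2}\le\|\psi\|_{L^{2}}^{2}$) together with its real--analyticity Proposition. Part~2 is (\ref{wek}): its right--hand side $C_{1}^{s}(T)C_{2}^{s}(T)+1$ is bounded in $T$, so by monotone convergence $\sup_{t>0}\sum_{n\ge1}n^{s}|\hat\psi_{n}(t,k)|^{2}\in L^{2/s}(a,b)$ for every finite $(a,b)\not\ni 0$, which is the claimed $H^{s/2}(\mathbb{T})$ estimate. Part~3 is the $L^{p}$ estimate for $S_N$ proved just before the theorem, obtained by interpolating the $L^{2}(\mathbb{R},dk)$ bound (\ref{inte1}) against the uniform--in--$k$ bound (\ref{inte2}), now read with $v=q$.

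I do not foresee a real obstacle here. The statement is a transcription of Theorem~\ref{short} and its companions, and the only structural point needing care -- the doubly degenerate spectrum $\lambda_{n}=n^{2}$, $n\in\mathbb{Z}$ -- is dispatched by the symmetry $\theta\mapsto-\theta$ (or, more laboriously, by the general multiplicity--two mechanism of Section~3). The genuine analytic content -- the identity $S_{N}(T)=-2\,\Im\int_{0}^{T}v(t)\,x_{N-1}(t,k)\overline{x_{N}(t,k)}\,dt$, the bound $\int_{a}^{b}\mu^{2}(k)\,dk\lesssim C_{1}^{2}(T)$ for $\mu$ built from the Carleson--Hunt maximal function of $\langle t\rangle^{\alpha}v(t)$, and the induction on $s$ -- is already in place; what remains specific to the present statement is only the arithmetic above, showing that $1-\gamma<\alpha<\gamma-\tfrac12$ with $\gamma>3/4$ keeps $C_{1}(T)$ and $C_{2}(T)$ from growing.
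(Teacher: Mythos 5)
Your proposal is correct and follows essentially the route the paper intends: the paper gives no separate proof of this theorem beyond the remark that one "repeats the same arguments" of Theorem~\ref{short}, its real-analyticity Proposition, and the $L^p$ interpolation of (\ref{inte1}) against (\ref{inte2}) in the multiplicity-two setting, and your verification that $\gamma>3/4$ keeps the window $\max(0,1-\gamma)<\alpha<\gamma-\tfrac12$ nonempty and $C_1(T)$, $C_2(T)$ uniformly bounded is exactly the arithmetic needed. Your observation that the evenness of $\cos(\theta)q(t)$ and of the initial datum forces $\hat\psi_{-n}=\hat\psi_n$, collapsing the doubly degenerate spectrum to the half-line model up to constants, is a clean shortcut around the multiplicity-two bookkeeping that the paper leaves implicit.
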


\bigskip\bigskip

Consider the model (\ref{dt1}) with potential
$V(t,\theta)=q(t)\cos(\mu \theta)$ where $\mu$ is integer and
$\mu\sim T^\beta$, $\beta\in [0,1]$. Then, obviously,
$u(T,\theta,k)=\phi(T,\mu\theta,k\mu^2T^{-2})$ and
\[
i\phi_t=ik\phi_{\theta\theta}+iq(t)\cos(\theta)\phi,\quad
\phi(0,\theta)=1
\]
We have

\[
\int\limits_{\mathbb{R}} \left(\sum\limits_{|n|>N} |\phi_n(T,k)|^2
\right)^2dk\lesssim N^{-2}T^{3-4\gamma}
\]
Taking $N\sim T\mu^{-1}$, we have
\[
\int\limits_{\mathbb{R}} \left(\sum\limits_{|n|>T} |u_n(T,k)|^2
\right)^2dk\to 0
\]
for the original solution (as long as $\gamma>3/4$).\bigskip

The methods developed in this section can handle the case  of
transport equation or equation with the symbol $|n|$. Some of them
are applicable to the general short-range potentials $V$ as well.
The perturbation arguments at some places are taken from \cite{kis}.
\bigskip

In conclusion, we will mention the case for which rather
satisfactory results can be obtained. Consider the following short
range evolution
\begin{equation}
u_t=kT^{-\alpha}u_\theta+2iq(t)\cos(\theta)u, \quad
u(0)=f(\theta),\quad 0<t<T\label{ur1}
\end{equation}
where $0<\alpha<1$. Notice that $q$ is not necessarily real-valued
but we require $|q|\lesssim T^{-\gamma}$. We will be interested in
the case $\gamma<(1+\alpha)/2$.

The scaled solution is
\[
u(t,\theta-kT^{-\alpha}t,k)=f(\theta)\exp\left(
izQ(k,t)+i\bar{z}Q(-k,t) \right)
\]
where
\[
z=e^{i\theta}, \quad Q(k,t)=\int_0^t q(t)\exp(ikT^{-\alpha}t)dt
\]
We have
\[
\left\|\max_{t\in [0,T]} |Q(k,t)|\right\|_2\lesssim
T^{(1+\alpha)/2-\gamma}
\]
so for most $k$,
\[
\max_{t\in [0,T]} |Q(k,t)|\lesssim  T^{(1+\alpha)/2-\gamma}
\]

Take $k$ such that
\[
\max_{t\in [0,T]} |Q(\pm k,t)|\leq Q=CT^{(1+\alpha)/2-\gamma}
\]
and expand into Taylor series to get
\[
\exp\left( izQ(k,t)+i\bar{z}Q(-k,t) \right)= \sum_{l\in \mathbb{Z}}
z^l \alpha_l
\]
and
\[
 \quad |\alpha_l|\lesssim \sum_{j=0}^\infty
\frac{Q^{l+j}Q^j}{(l+j)!j!}, \quad l>0
\]
Notice that
\[
\frac{{Q}^{l+j}}{(l+j)!}
\]
decays in $j$ as long as $l>Q$.  So,
\[
|\alpha_l|\lesssim \frac{Q^l}{l!} e^{Q}<(d/e)^{-l} e^{Q},
\]
where $l>dQ$.  Then,
\[
\sum_{l>dQ} |\alpha_l|^2<2^{-Q}
\]
for $d$ large enough. This means exponential localization to the
range $|l|<dQ$ (since $Q$ is large) for $f=1$ and for any other column
of the monodromy matrix. Such a strong localization result allows us
to run a simple perturbation argument. Take initial value
$f(\theta)=e^{ij\theta}$ with large positive $j$. Fix $k$  such that
the localization property holds and act on (\ref{ur1}) with Riesz
projection
\[
(Pu)_t=kT^{-\alpha}(Pu)_\theta+2iPq(t)\cos(\theta)Pu+\psi
\]
where
\[
\psi=2iPq(t)\cos(\theta)P^\perp u
\]
Due to strong localization, we have
\[
\|\psi\|\lesssim T^{-\gamma}2^{-Q/2}
\]
for each $t\in [0,T]$ provided that $j>dQ$ so $Pu$ is an approximate
solution to the problem
\begin{equation}
y_t=ikT^{-\alpha}|\partial|y+2iq(t)P\cos\theta Py, \quad y(0)=e_j
\label{ur4}
\end{equation}
Assuming that $q$ is real-valued and using Duhamel formula, we get
\[
\max_{t\in [0,T]} \|Pu-y\|\lesssim T^{1-\gamma}2^{-Q/2}
\]
In particular, that means $\max_{t\in
[0,T]}|x_{0l}(t,k)|<T^{1-\gamma}2^{-Q/2}$ for each $l:dQ<l<T$ where
$x_{ij}$ are elements of the monodromy matrix for the problem
(\ref{ur4}). Due to symmetry of the monodromy matrix (or time
reversal), we have
\[
\max_{t\in [0,T]}\sum_{l=dQ}^T |x_{l0}(t,k)|^2\lesssim
T^{3-2\gamma}2^{-CT^{\epsilon}}, \quad
\epsilon=(1+\alpha)/2-\gamma>0
\]
Since the first column is always localized to the range
$(0,T^{1-\gamma})$, we obtain its localization to the range
$(0,T^{1-\gamma}(dT^{(\alpha-1)/2}))$ for most $k$. By simple
scaling, one proves that the solution to
\begin{equation}
y_t=ikT^{-1}|\partial|y+iq(t)P\cos(\mu\theta) Py, \quad y(0)=1,
\quad \mu\leq T
\end{equation}
is localized to $[0,T^{1-\gamma}\sqrt{\mu}]$ for most $k$ versus
$[0,T^{1-\gamma}{\mu}]$ for all $k$. If $\gamma\geq 1/2$, then
$[0,T^{1-\gamma}\sqrt{\mu}]\subseteq [0,T]$, as expected.

\section{Appendix}
In this section, we collect rather standard results that we used in
the main text. The following lemma is well-known
\begin{lemma}
If $f\in H^{1/2}(\mathbb{T})$, then
\[
\|e^{f}\|_2<C_1 e^{C_2\|f\|_{H^{1/2}(\mathbb{T})}^2}
\]
Also, this map is continuous on $H^{1/2}(\mathbb{T})$.
 \label{al1}
\end{lemma}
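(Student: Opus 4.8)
The plan is to reduce both assertions to the borderline Sobolev inequality on $\mathbb{T}$ with its sharp constant growth,
\[
\|g\|_{L^p(\mathbb{T})}\le C\sqrt{p}\,\|g\|_{H^{1/2}(\mathbb{T})},\qquad 2\le p<\infty ,
\]
and then to conclude by a Taylor expansion of the exponential. First I would pass to real-valued functions: since $|e^{f}|^2=e^{2\Re f}$ we have $\|e^f\|_{L^2(\mathbb{T})}^2=\int_{\mathbb{T}}e^{2\Re f}$ and $\|\Re f\|_{H^{1/2}}\le\|f\|_{H^{1/2}}$, so it is enough to prove that for real $g\in H^{1/2}(\mathbb{T})$ one has $\int_{\mathbb{T}}e^{g}\le C_1 e^{C_2\|g\|_{H^{1/2}}^2}$; applying this to $g=2\Re f$ and taking a square root gives the stated bound with new constants. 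Splitting off the mean of $g$ is harmless, and in fact the $L^p$ estimate above already controls $\|g\|_{L^p}$ including the zero frequency, so no further reduction is needed.

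To prove the Sobolev bound I would use a dyadic Fourier decomposition $g=\sum_{j\ge0}\Delta_j g$, with $\widehat{\Delta_j g}$ supported in $\{2^{j-1}\le|n|<2^j\}$ for $j\ge1$ and $\Delta_0$ collecting $|n|\le1$. Bernstein's inequality on $\mathbb{T}$ gives $\|\Delta_j g\|_{L^p}\lesssim 2^{j(1/2-1/p)}\|\Delta_j g\|_{L^2}$, whence by the triangle inequality and Cauchy--Schwarz in $j$,
\[
\|g\|_{L^p}\lesssim\sum_{j\ge0}2^{-j/p}\bigl(2^{j/2}\|\Delta_j g\|_{L^2}\bigr)\le\Bigl(\sum_{j\ge0}2^{-2j/p}\Bigr)^{1/2}\Bigl(\sum_{j\ge0}2^{j}\|\Delta_j g\|_{L^2}^2\Bigr)^{1/2}\lesssim\sqrt{p}\,\|g\|_{H^{1/2}},
\]
since $\sum_{j\ge0}2^{-2j/p}=(1-2^{-2/p})^{-1}\sim p/(2\ln2)$ and $\sum_j 2^{j}\|\Delta_j g\|_{L^2}^2\lesssim\|g\|_{H^{1/2}}^2$.

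With this in hand the exponential bound is bookkeeping. Writing $M=\|g\|_{H^{1/2}}$,
\[
\int_{\mathbb{T}}e^{g}=\sum_{k\ge0}\frac1{k!}\int_{\mathbb{T}}g^{k}\le 2\pi+\sum_{k\ge1}\frac{\|g\|_{L^k}^{k}}{k!}\le 2\pi+\sum_{k\ge1}\frac{(CM\sqrt{k})^{k}}{k!},
\]
and using $k!\ge(k/e)^k$ the $k$-th summand is at most $(CMe/\sqrt k)^{k}$, which is $\le2^{-k}$ once $k\ge 4C^2e^2M^2$ and otherwise is dominated by its maximum $e^{k_*/2}$ attained near $k_*\sim C^2eM^2$; summing the $O(M^2)$ remaining terms yields $\int_{\mathbb{T}}e^{g}\lesssim e^{C_2M^2}$. (An alternative route, giving the sharp $C_2=1/(4\pi)$, is to invoke the trace Moser--Trudinger inequality $\sup_{\|g\|_{\dot{H}^{1/2}}\le1}\int_{\mathbb{T}}e^{\pi g^{2}}<\infty$ together with the elementary $g\le a+g^{2}/(4a)$; I would mention this but base the proof on the self-contained argument above.)

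For continuity, suppose $f_n\to f$ in $H^{1/2}(\mathbb{T})$ and set $g_n=f_n-f$, so $\|g_n\|_{H^{1/2}}\to0$. Then $e^{f_n}-e^{f}=e^{f}(e^{g_n}-1)$ and Hölder gives
\[
\|e^{f_n}-e^f\|_{L^2}^2=\int_{\mathbb{T}}e^{2\Re f}\,|e^{g_n}-1|^2\le\|e^{2\Re f}\|_{L^2}\,\|e^{g_n}-1\|_{L^4}^2 ,
\]
where $\|e^{2\Re f}\|_{L^2}<\infty$ by the first part, while $\|e^{g_n}-1\|_{L^4}\le\sum_{k\ge1}\|g_n\|_{L^{4k}}^k/k!\le\sum_{k\ge1}(2C\sqrt k\,\|g_n\|_{H^{1/2}})^k/k!\to0$ as $\|g_n\|_{H^{1/2}}\to0$, by dominated convergence for the series (each term tends to $0$, and for $\|g_n\|_{H^{1/2}}\le1$ the sum is dominated by the convergent $\sum_k(2C\sqrt k)^k/k!$). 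Hence $\|e^{f_n}-e^f\|_{L^2}\to0$. I expect the one genuinely substantive point to be the $\sqrt{p}$ growth in the Sobolev embedding: with any weaker power of $p$ the Taylor series would converge only for small $M$ and would not give a Gaussian-in-norm bound, so the dyadic/Bernstein step is where the real work lies; the reductions, the Stirling estimate, and the Hölder/dominated-convergence argument for continuity are all routine.
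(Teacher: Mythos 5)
Your proof is correct, and at bottom it rests on the same mechanism as the paper's: expand the exponential in a Taylor series and use the fact that in $H^{1/2}(\mathbb{T})$ the $L^p$ norm grows only like $\sqrt{p}$ (equivalently $\|f^n\|_2\lesssim (Cn)^{n/2}\|f\|_{H^{1/2}}^n$), so that Stirling makes the series summable with a Gaussian-in-norm bound. Where you differ is in how that key sub-estimate is produced. The paper stays entirely on the Fourier side: it writes $\|f^n\|_2=\|\hat f\ast\cdots\ast\hat f\|_2$, applies Young's convolution inequality to get $\|\hat f\|_{p_n}^n$ with $p_n=2n/(2n-1)$, and controls $\|\hat f\|_p$ for $p$ near $1$ by H\"older against the weight $|n|^{1/2}$, which yields the constant $(C/(p-1))^{(2-p)/(2p)}\sim (Cn)^{1/2}$. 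You instead prove the physical-side embedding $\|g\|_{L^p}\lesssim\sqrt{p}\,\|g\|_{H^{1/2}}$ by Littlewood--Paley and Bernstein and then estimate $\int e^{2\Re f}$ directly. The two routes are equivalent in content; yours isolates a reusable, sharply normalized Sobolev inequality (and lets you point to Moser--Trudinger for the optimal constant), while the paper's is marginally shorter because it never leaves the convolution identity. Your continuity argument is in fact more complete than the paper's one-line remark that each term of the series is continuous: the factorization $e^{f_n}-e^{f}=e^{f}(e^{g_n}-1)$ together with H\"older and the dominated control of the tail supplies exactly the uniform summability that remark implicitly needs. Two cosmetic points: your $L^p$ bound is stated for $p\ge 2$, so for the $k=1$ term use $\|g\|_{L^1}\le\sqrt{2\pi}\,\|g\|_{L^2}$; and the final constant should be tracked through the reduction $g=2\Re f$ and the square root, which only rescales $C_2$.
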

\begin{proof}
We have
\[
e^f=\sum_{n=0}^\infty \frac{f^n}{n!}
\]
\[
\|f^n\|_2=\|\hat{f}\ast \ldots \ast \hat{f}\|_2
\]
By H\"older,
\[
\|\hat{f}\|_p\lesssim
\left(\frac{C}{p-1}\right)^{(2-p)/(2p)}\|f\|_{H^{1/2}(\mathbb{T})}
\]
and by Young's inequality
\[
\|\hat{f}\ast \ldots \ast \hat{f}\|_2\leq \|\hat{f}\|^n_{p_n},\quad
p_n=2n(2n-1)^{-1}
\]
So,
\[
\|f^n\|_2\lesssim (Cn)^{n/2}\|f\|_{H^{1/2}(\mathbb{T})}^n
\]
which yields the necessary bound after application of Stirling's
formula. Also, since each term in the series is continuous in $f$,
we have continuity of the exponential map.
\end{proof}

\begin{lemma}
If $f\in H^{1/2}(\mathbb{T})$ and $f\in \mathbb{R}$, then
\[
\|e^{if}-1\|_{H^{1/2}(\mathbb{T})}\lesssim
\|f\|_{H^{1/2}(\mathbb{T})}
\]
and the exponential map is continuous in $H^{1/2}(\mathbb{T})$
metric.\label{al2}
\end{lemma}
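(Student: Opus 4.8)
The plan is to replace the Fourier-side norm on $H^{1/2}$ by its intrinsic (Gagliardo) form: on the circle
\[
\|g\|_{H^{1/2}(\mathbb{T})}^2\asymp\|g\|_{L^2(\mathbb{T})}^2+\iint_{\mathbb{T}\times\mathbb{T}}\frac{|g(x)-g(y)|^2}{|x-y|^2}\,dx\,dy ,
\]
$|x-y|$ being the geodesic distance on $\mathbb{T}$; this is just $\sum_n|n|\,|\hat g(n)|^2\asymp\iint|g(x)-g(y)|^2|x-y|^{-2}dx\,dy$, essentially the identity already used implicitly in Section~2. Since $f$ is real, the function $\phi(s)=e^{is}-1$ is $1$-Lipschitz with $\phi(0)=0$, so pointwise $|e^{if(x)}-e^{if(y)}|\le|f(x)-f(y)|$ and $|e^{if(x)}-1|\le|f(x)|$. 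Feeding these two bounds into the displayed characterization gives $\|e^{if}-1\|_{H^{1/2}(\mathbb{T})}\lesssim\|f\|_{H^{1/2}(\mathbb{T})}$ immediately.

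For continuity it is enough, $H^{1/2}(\mathbb{T})$ being a metric space, to show that $f_n\to f$ in $H^{1/2}(\mathbb{T})$ (all real) forces $e^{if_n}\to e^{if}$ in $H^{1/2}(\mathbb{T})$; the $L^2$ part is immediate from the Lipschitz bound. For the seminorm I would set
\[
\Delta_n(x,y)=\bigl(e^{if_n(x)}-e^{if_n(y)}\bigr)-\bigl(e^{if(x)}-e^{if(y)}\bigr) ,
\]
which satisfies, by $|\phi(s)-\phi(s')|\le|s-s'|$ applied with two different groupings, both $|\Delta_n(x,y)|\le|f_n(x)-f_n(y)|+|f(x)-f(y)|$ and $|\Delta_n(x,y)|\le|f_n(x)-f(x)|+|f_n(y)-f(y)|$. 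Then argue by subsequences: along any subsequence $f_n\to f$ in $H^{1/2}$ still holds, hence there is a further subsequence with $f_n\to f$ a.e.\ on $\mathbb{T}$; the second bound gives $\Delta_n\to 0$ a.e.\ on $\mathbb{T}\times\mathbb{T}$, so $|x-y|^{-2}|\Delta_n|^2\to 0$ a.e.; the first bound gives $|x-y|^{-2}|\Delta_n|^2\le G_n:=2|x-y|^{-2}\bigl(|f_n(x)-f_n(y)|^2+|f(x)-f(y)|^2\bigr)$, and $G_n\to 4|x-y|^{-2}|f(x)-f(y)|^2$ a.e.\ with $\iint_{\mathbb{T}^2}G_n\to\iint_{\mathbb{T}^2}4|x-y|^{-2}|f(x)-f(y)|^2$ because $f_n\to f$ in $\dot H^{1/2}(\mathbb{T})$. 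Pratt's generalized dominated convergence theorem then yields $\iint_{\mathbb{T}^2}|x-y|^{-2}|\Delta_n|^2\to 0$ along that sub-subsequence; since the subsequence was arbitrary, the full sequence converges, which is the claimed continuity.

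I expect the only genuinely delicate point to be this last convergence. The naive reduction $e^{if_n}-e^{if}=e^{if}\bigl(e^{i(f_n-f)}-1\bigr)$ would send matters back to the first estimate, but multiplication by $e^{if}$ is not a bounded operation on $H^{1/2}(\mathbb{T})$ in general, $H^{1/2}(\mathbb{T})$ not being an algebra; the Pratt-type argument on the Gagliardo double integral is exactly what replaces the missing multiplier bound. Alternatively one could invoke the Krein-algebra multiplication estimate, cf.\ \cite{BS}, but the argument above keeps the Appendix self-contained.
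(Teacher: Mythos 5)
Your proof is correct. The first estimate is argued exactly as in the paper: both use the Gagliardo characterization of $H^{1/2}(\mathbb{T})$ together with the $1$-Lipschitz bound $|e^{if(x)}-e^{if(y)}|\leq|f(x)-f(y)|$ (the paper controls the zeroth Fourier coefficient rather than the full $L^2$ norm, an immaterial difference). For continuity, however, you take a genuinely different route. The paper writes
\[
e^{if_n(x)}-e^{if(x)}-\bigl(e^{if_n(y)}-e^{if(y)}\bigr)=\bigl(e^{i(f_n(x)-f(x))}-e^{i(f_n(y)-f(y))}\bigr)e^{if(x)}+\bigl(e^{i(f_n(y)-f(y))}-1\bigr)\bigl(e^{if(x)}-e^{if(y)}\bigr),
\]
handles the first term by the Lipschitz bound applied to $f_n-f$ (the unimodular factor $e^{if(x)}$ is harmless), and handles the second by splitting $F_n=e^{i(f_n-f)}-1$ into a piece of small sup-norm and a piece of small support, then absorbing each against the fixed finite measure $|x-y|^{-2}|e^{if(x)}-e^{if(y)}|^2\,dx\,dy$. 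You instead bound the double difference $\Delta_n$ in two ways and invoke Pratt's generalized dominated convergence theorem along subsequences, using that the Gagliardo seminorms of $f_n$ converge to that of $f$. Both arguments are sound. The paper's decomposition is constructive and quantitative (it isolates exactly where smallness of $\|f_n-f\|_{H^{1/2}}$ enters, at the cost of the cut-off argument for $F_n$), whereas your Pratt-type argument is shorter and avoids the splitting entirely, at the cost of a nonconstructive subsequence extraction and of invoking a slightly less standard convergence theorem. Your closing observation about why the naive reduction through $e^{if}(e^{i(f_n-f)}-1)$ fails — $H^{1/2}(\mathbb{T})$ not being an algebra — is exactly the obstruction the paper's two-term decomposition is designed to circumvent.
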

\begin{proof}
We have the following characterization of $H^{1/2}$ space
(\cite{Sim1}, Propositions 6.1.10 and 6.1.11)
\begin{equation}
\|f\|_{H^{1/2}(\mathbb{T})}^2\sim
|\hat{f}(0)|^2+\int\limits_{\mathbb{T}}\int\limits_{\mathbb{T}}
\frac{|f(x)-f(y)|^2}{|x-y|^2}dxdy \label{ch}
\end{equation}
Since
\[
\left|\int\limits_\mathbb{T}
(e^{if(x)}-1)dx\right|^2\leq\left|\int\limits_\mathbb{T}
|f(x)|dx\right|^2\lesssim \|f\|_2^2\leq
\|f\|_{H^{1/2}(\mathbb{T})}^2
\]
and
\[
|e^{if(x)}-e^{if(y)}|=\left|\int\limits_{f(x)}^{f(y)}
e^{it}dt\right|\leq |f(x)-f(y)|
\]
we have the first statement of the lemma. The continuity of
exponential at zero is elementary. Now, assume that
$\|f_n-f\|_{H^{1/2}}\to 0$. Clearly,
\[
\int_\mathbb{T} e^{if_n}dx\to \int_\mathbb{T}e^{if}dx
\]

 For the second term in (\ref{ch}), we have
\[
{e^{if_n(x)}-e^{if(x)}-(e^{if_n(y)}-e^{if(y)}})=(e^{i(f_n(x)-f(x))}-e^{i(f_n(y)-f(y))})e^{if(x)}
\]
\[
+(e^{i(f_n(y)-f(y))}-1)(e^{if(x)}-e^{if(y)})
\]
and we just need to show that
\[
\int\limits_{\mathbb{T}}\int\limits_{\mathbb{T}}
\frac{|(e^{i(f_n(y)-f(y))}-1)(e^{if(x)}-e^{if(y)})|^2}{|x-y|^2}dxdy\to
0
\]
The function $ F_n(y) =e^{i(f_n(y)-f(y))}-1$  satisfies $|F_n|\leq
2$ and $\|F_n\|_1\to 0$. So $F_n=F^1_n+F^2_n$ such that
$F^1_n=F_n\cdot \chi_{|F_n|<\epsilon}, |F_n^1|<\epsilon$ and

\[
|F^2_n|\leq 2, \quad |{\rm supp}\,(F^2_n)|\lesssim
\epsilon^{-1}\int\limits_{\mathbb{T}} |F_n|dx\to 0
\]
Since $\epsilon$ is arbitrary positive number,
\[
\int\limits_{\mathbb{T}}\int\limits_{\mathbb{T}} \frac{|F_n(y)(e^{if(x)}-e^{if(y)})|^2}{|x-y|^2}dxdy\to
0,\quad n\to \infty
\]
\end{proof}

 {\bf Acknowledgements.} This research was supported by Alfred
P. Sloan Research Fellowship and NSF Grants: DMS-0500177,
DMS-0758239.

\bigskip

\end{document}